\newcolumntype{C}[1]{>{\centering}m{#1}}
\newtheorem{theorem}{Theorem}[section]
\newtheorem{prop}[theorem]{Proposition}
\newtheorem{lem}[theorem]{Lemma}
\newtheorem{thm}[theorem]{Theorem}
\theoremstyle{definition}
\newtheorem{cond}[theorem]{Condition}
\theoremstyle{remark}
\newtheorem{ex}[theorem]{Example}
\newtheorem{rem}[theorem]{Remark}
\numberwithin{equation}{section} \numberwithin{theorem}{section}
\newcommand{\Exp}{\operatorname{E}}		
\newcommand{\Cov}{\operatorname{Cov}}
\newcommand{\Var}{\operatorname{Var}}
\newcommand{\R}{\mathbb R}
\newcommand{\N}{\mathbb N}
\newcommand{\Z}{\mathbb Z}
\newcommand{\eps}{\varepsilon}
\newcommand{\Bc}{\mathcal{B}}
\newcommand{\Fc}{\mathcal{F}}
\newcommand{\Jc}{\mathcal{J}}
\newcommand{\Ed}{\operatorname{Exp}} 		
\newcommand{\slb}{{\operatorname{sb}}} 		
\newcommand{\djb}{{\operatorname{db}}}		
\newcommand{\diff}{\, \mathrm{d}}
\newcommand{\Ei}{\operatorname{Ei}}			
\newcommand{\Beta}{\operatorname{B}}		
\newcommand{\CFG}{\operatorname{CFG}} 	
\newcommand{\MAD}{\operatorname{MAD}} 	
\newcommand{\ROOT}{\operatorname{R}} 	
\newcommand{\m}{\operatorname{m}} 			
\newcommand{\scs}{\scriptscriptstyle}			
\newcommand{\thetahat}[3]{\hat \theta^{ #3 }_{ #1 , #2 }}
\newcommand{\Thetahat}[2]{\hat \theta^{ #2 }_{ #1 }}
\newcommand{\ip}[1]{\lfloor #1 \rfloor}
\newcommand{\I}{\mathds{1}}
\newcommand{\ind}{\mathds{1}}
\newcommand{\wto}{\stackrel{d}{\longrightarrow}}
\newcommand{\Nor}{\mathcal{N}}
\newcommand{\op}{o_\Prob(1)}
\newcommand{\ho}{^}
\newcommand{\intne}{\int_{0}\ho{1}}
\newcommand{\intnu}{\int_{0}\ho{\infty}}
\newcommand{\intmn}{\int_{-\infty}\ho{0}}
\newcommand{\iu}{\I\Big(U_s > 1- \frac{x}{b_n}\Big)}
\newcommand{\iue}{\I\big(U_s^{\varepsilon} > 1- \frac{x}{b_n}\big)}
\newcommand{\Pro}{\mathbb{P}}
\newcommand{\Prob}{\mathbb{P}}
\newcommand{\wk}{\sqrt{k_n}}
\begin{document}

\begin{frontmatter}

\title{Method of moments estimators for the extremal index of a stationary time series}
\runtitle{Method of moments estimators for the extremal index}


\author{\fnms{Axel} \snm{Bücher}\ead[label=e1]{axel.buecher@hhu.de}}
\and
\author{\fnms{Tobias} \snm{Jennessen}\ead[label=e2]{tobias.jennessen@hhu.de}}

\address{Heinrich-Heine-Universität Düsseldorf \\ Universitätsstr.\, 1 \\ 40225 Düsseldorf \\
\printead{e1,e2}}

\runauthor{A. Bücher and T. Jennessen}
\affiliation{Heinrich-Heine Universität Düsseldorf}

\begin{abstract}
The extremal index $\theta$, a number in the interval $[0,1]$, is known to be a measure of primal importance for analyzing the extremes of a stationary time series.
New rank-based estimators for $\theta$ are proposed which rely on the construction of approximate samples from the exponential distribution with parameter $\theta$ that is then to be fitted via the method of moments. 
The new estimators are analyzed both theoretically as well as empirically through a large-scale simulation study. In specific scenarios, in particular for time series models with $\theta \approx 1$, they are found to be superior to recent competitors from the literature.
\end{abstract}

\begin{keyword}[class=AMS]
\kwd[Primary ]{62G32} 
\kwd{62E20} 
\kwd{62M09} 
\kwd[; secondary ]{60G70}	
\kwd{62G20} 
\end{keyword}


\tableofcontents

\end{frontmatter}

\section{Introduction} \label{sec:intro}

The statistical analysis of the extremal behavior of a stationary time series is important in many fields of application, such as in hydrology, meteorology,  finance or actuarial science \citep{BeiGoeSegTeu04}. Such an analysis typically consists of two steps: (1) assessing the tail of the marginal law and (2) assessing the serial dependence of the extremes, that is, the tendency that extreme observations occur in clusters. 
The present work is concerned with step (2). The most common and simplest mathematical object capturing  the serial dependence between the extremes  is provided by the extremal index $\theta \in [0,1]$. In a suitable asymptotic framework, the extremal index can be interpreted as the reciprocal of the expected size of a cluster of extreme observations. The underlying probabilistic theory was worked out in \cite{Lea83, LeaLinRoo83, Obr87, HsiHusLea88, LeaRoo88}.

Estimating the extremal index based on a finite stretch of observations from the time series has been extensively studied in the literature.  An early overview is provided in Section~10.3.4 in \cite{BeiGoeSegTeu04}, where the estimators are classified into three groups: estimators based on the blocks method, the runs method or the inter-exceedance time method. Respective references are \cite{Hsi93, SmiWei94, FerSeg03, Suv07, Rob09, Nor15, Fer18, FerFer18, Cai19}, among many others. The proposed estimators typically depend on two or, arguably preferable, one parameter to be chosen by the statistician. The present paper is on a class of method of moments estimators (based on the blocks method), which improves upon a recent estimator proposed in \cite{Nor15} and analyzed theoretically in \cite{BerBuc18}.

Some notations and assumptions are necessary for the motivation of the new class of estimators. Throughout the paper, $X_1,X_2,\ldots$ denotes a stationary sequence of real-valued random variables with continuous cumulative distribution function (c.d.f.) $F$.  The sequence is assumed to have an extremal index $\theta \in (0,1]$, i.e., for any $\tau >0$, there exists a sequence $u_b = u_b(\tau), b \in \N,$ such that $\lim_{b \to \infty} b \bar{F}(u_b) = \tau$ and 
\begin{align} \label{eq:ei}
\lim_{b \to \infty} \Pro(M_{1:b}\leq u_b) = e\ho{-\theta \tau},
\end{align}
where $\bar{F}=1-F$ and $M_{1:b} = \max\{X_1,\ldots,X_b\}$.
Next, define a sequence of  standard uniform random variables by $U_s = F(X_s)$ and let
\begin{align} \label{eq:ydef}
Y_{1:b} = -b\log(N_{1:b}), \qquad N_{1:b} = F(M_{1:b}) = \max\{U_1, \dots, U_b\}.
\end{align}
Since $b\bar F \{ F^{\leftarrow}(e^{-x/b}) \} = b(1-e^{-x/b}) \to x$ for $b\to\infty$, it follows from \eqref{eq:ei} that, for any $x>0$,
\begin{align} 
\Pro( Y_{1:b} \ge x)  \label{eq:yb}
&= 
\Pro( M_{1:b} \le F^{\leftarrow}(e^{-x/b}))   
\to 
e^{-\theta x},
\end{align}
where $F^{\leftarrow}(z)= \inf\{y \in \R: F(y) \geq z\}$ denotes the generalized inverse of $F$ evaluated at $z\in\R$.
In other words, for large block length $b$,  $Y_{1:b}$ approximately follows an exponential distribution with parameter~$\theta$, denoted by $\Ed(\theta)$ throughout. This inspired \cite{Nor15} and \cite{BerBuc18} to estimate $\theta$ by the maximum likelihood estimator for the exponential distribution; see Section~\ref{sec:def} below for details on how to arrive at an observable (rank-based) approximate sample from the $\Ed(\theta)$-distribution based on an observed stretch of length $n$ from the time series $(X_s)_{s\in \N}$.

The idea of transforming observations into a sample of exponentially distributed observations is actually not new within extreme value statistics: it is also, among many others, the main motivation for the Pickands estimator in multivariate extremes \citep{Pic81, GenSeg09}. More precisely, if $(X,Y)$ is a bivariate random vector from a multivariate extreme value distribution with Pickands function $A= (A(w))_{w \in [0,1]}$, then $\xi(w) = \min\{ - \log F_X(X) / (1-w), -\log F_Y(Y) / w\}$ is exponentially distributed with parameter $A(w)$. Given a sample of size $n$ from $(X,Y)$, we may replace $F_X$ and $F_Y$ by their empirical counterparts and arrive at an approximate sample of size $n$ from the $\Ed(A(w))$-distribution, to be, for instance, estimated by the maximum likelihood estimator. 

The present paper is now motivated by the following observation: while the maximum likelihood estimator is asymptotically efficient in the ideal situation of observing an i.i.d.\ sample from the exponential distribution, it was shown in \cite{GenSeg09} for rank-based estimators of the Pickands function that it is in fact more efficient to consider alternative estimators based on the method of moments, such as a rank-based version of the CFG-estimator \citep{CapFouGen97}. Given that Northrop's blocks estimator is also rank-based, the main motivation of this work is to consider CFG-type estimators for the extremal index $\theta$. Alongside, we will also investigate other moment-based estimators, including one that is closely connected to the madogram estimator in \cite{NavGuiCooDie09}. 
We will show that, depending on the true value of $\theta$, the new estimators may either exhibit a smaller or a larger asymptotic variance than Northrop's maximum likelihood estimator. In particular, we will show that the CFG-type estimator's variance is substantially smaller for  $\theta$ close to one, i.e., for time series with little clustering of extremes.

The remaining parts of this paper are organized as follows: in Section~\ref{sec:def}, we collect some results about certain useful moments of the exponential distribution and use those to introduce the new estimators for $\theta$. Regularity assumptions needed to prove asymptotic results are summarized and discussed in Section~\ref{sec:cond}. The paper's main results are then presented in Section~\ref{sec:asy}, alongside with a discussion of certain aspects of the derived asymptotic variance formulas. Section~\ref{sec:armax} is about a particular time series model, for which we show that all regularity conditions imposed in Section~\ref{sec:cond} are met. The finite-sample performance of the new estimators is investigated in a Monte-Carlo simulation study in Section~\ref{sec:sims}. Finally, all proofs are postponed to Section~\ref{sec:pmaster}.

\section{Definition of estimators} 
\label{sec:def}

Recall the definition of $Y_{1:b}$ in \eqref{eq:ydef}, where $b\in\N$. Similarly, let
\[
Z_{1:b}=b(1-N_{1:b}), \qquad N_{1:b} = F(M_{1:b}) = \max\{U_1, \dots, U_b\},
\]
and note that, as $b\to\infty$ and for any $x>0$,
\begin{align} \label{eq:zb}
\Pro( Z_{1:b} \ge x)  
= 
\Pro( M_{1:b} \le F^{\leftarrow}(1-x/b))   
\to 
 e^{-\theta x}
\end{align}
by similar arguments as for $Y_{1:b}$. The convergence relations in \eqref{eq:yb} and \eqref{eq:zb} serve as a basis for the method of moments estimators defined below. 

Subsequently, let $X_1, \dots, X_n$ denote a finite stretch of observations from the stationary sequence $(X_s)_{s \ge1}$.
Within Section~\ref{subsec:djbsample} and \ref{subsec:slbsample}, we start by using  \eqref{eq:yb} and \eqref{eq:zb} to derive some observable, approximate samples from the $\Ed(\theta)$-distribution. In Section~\ref{subsec:mom}, we collect some moment equations for the exponential distribution, which will then be used to motivate new estimators for the extremal index in Section~\ref{subsec:estimators}.

\subsection{Two approximate $\Ed(\theta)$-samples based on disjoint blocks maxima}
\label{subsec:djbsample}

Divide the sample $X_1, \dots, X_n$ into $k_n$ successive blocks of size $b_n$, and for simplicity assume that $n=b_nk_n$ (otherwise, the last block of less than $b_n$ observations should be deleted). For $i=1,\ldots,k_n$, let 
\[
M_{ni} = \max \{ X_{(i-1)b_n+1},\ldots,X_{ib_n} \}  
\] 
denote the maximum of the $X_s$ in the $i$th block of observations and let 
\[
Y_{ni} = -b_n \log N_{ni}, \qquad Z_{ni}=b_n (1-N_{ni}), \qquad N_{ni} = F(M_{ni}).
\] 
Due to relations \eqref{eq:yb} and \eqref{eq:zb}, if the block size $b=b_n$ is sufficiently large, the (unobservable) random variables $Y_{ni}$ and $Z_{ni}$ are approximately exponentially distributed with parameter $\theta$. Observable counterparts are obtained by replacing $F$ by  the (slightly adjusted) empirical c.d.f.\ $\hat{F}_{n}(x) = (n+1)^{-1} \sum_{s=1}^{n} \I(X_s \leq x)$, giving rise to the definitions 
\[
\hat Y_{ni} = -b_n \log \hat N_{ni}, \qquad  \hat Z_{ni}=b_n (1-\hat N_{ni}), \qquad \hat N_{ni} = \hat F_n(M_{ni}).
\] 
Both the samples $\mathcal Y^{\djb}_n = \{ \hat Y_{ni}: i=1, \dots, k_n\}$ and $\mathcal Z^{\djb}_n = \{\hat Z_{ni}: i=1, \dots, k_n\}$  will be used later to define \textit{disjoint blocks estimators for $\theta$} (note that both samples are dependent over $i$ due to the use of $\hat F_n$, which complicates the asymptotic analysis).

\subsection{Two approximate $\Ed(\theta)$-samples based on sliding blocks maxima}
\label{subsec:slbsample}

As in the previous paragraph, let $n$ denote the sample size and $b_n$ denote a block length parameter (the assumption that $k_n=n/b_n \in \N$ is not needed, no discarding is necessary). For $t=1, \dots, n-b_n+1$, let
\[
M_{nt}^{\slb} = M_{t:t+b_n-1} = \max \{ X_{t}, \dots, X_{t+b_n-1} \}  
\] 
denote the maximum of the $X_s$ in a block of length $b_n$ starting at observation $t$. Define
\begin{align*}
Y_{nt}^{\slb} &= -b_n \log N_{nt}^{\slb}, \qquad & Z_{nt}^{\slb} &=b_n (1-N_{nt}^{\slb}), \qquad & N_{nt}^{\slb} &= F(M_{nt}^{\slb}), \\
\hat Y_{nt}^{\slb} &= -b_n \log \hat N_{nt}^{\slb}, \qquad & \hat Z_{nt}^{\slb} &=b_n (1-\hat N_{nt}^{\slb}), \qquad & \hat N_{nt}^{\slb} &= \hat F_n(M_{nt}^{\slb}).
\end{align*}
By the same heuristics as before, the observable samples $\mathcal Y^{\slb}_n = \{ \hat Y_{nt}^{\slb}: t=1, \dots, n-b_n+1\}$ and $\mathcal Z^{\slb}_n = \{\hat Z_{nt}^{\slb}: t=1, \dots, n-b_n+1\}$ are approximate samples from the exponential distribution and
will be used later to define \textit{sliding blocks estimators for $\theta$} (both samples are heavily dependent over $i$ due to the use of $\hat F_n$ and the use of overlapping blocks).

\subsection{Preliminaries on the exponential distribution} 
\label{subsec:mom}

Some important moment equations, valid for a random variable $\xi$, which is $\Ed(\theta)$-distributed, are collected. 
First,
\begin{align} \tag{CFG} \label{eq:cfg}
\Exp[ \log \xi] =  - \log \theta - \gamma =: \varphi_{(\rm C)}(\theta) , 
\end{align}
where $\gamma =- \int_0^\infty \log (x) e^{-x} \diff x \approx 0.577$ denotes the Euler-Mascheroni-constant. Equation~\eqref{eq:cfg} is the basis for motivating the CFG-estimator, see \cite{CapFouGen97, GenSeg09} and the details in Section~\ref{sec:intro}. Next, note that
\begin{align} \tag{MAD} \label{eq:mad} 
\Exp[\exp(-\xi)] = \frac{\theta}{1+\theta}=: \varphi_{(\rm M)}(\theta),
\end{align} 
which serves as a basis for the madogram, see \cite{NavGuiCooDie09}. A further choice, including \eqref{eq:cfg} as a limit,  is provided by
\begin{align} \tag{ROOT} \label{eq:root}
\Exp[\xi^{1/p}] = \theta^{-1/p} \Gamma(1+1/p) =: \varphi_{{(\rm R)}, p}(\theta),
\end{align}
where $\Gamma(x) = \int_0^\infty t^{x-1} e^{-t} \diff t$ denotes the Gamma function and where $p>0$. The moment estimator in case of $p=1$ will turn out to coincide with Northrop's maximum likelihood estimator. Also note that the previous equation is equivalent to
\begin{align} \label{eq:root2}
\Exp\Big[\frac{\xi^{1/p}-1}{1/p}\Big] = \frac{\theta^{-1/p} \Gamma(1+1/p)-1 }{1/p} =: \tilde \varphi_{(\rm R),p}(\theta),
\end{align}
and taking the limits for $p\to\infty$ on both sides (interchanging the limit and the expectation on the left) exactly yields Equation~\eqref{eq:cfg}.

\subsection{Definition of the estimators}
\label{subsec:estimators}

Let $\chi_m= \{\xi_1, \dots, \xi_m\}$ denote a generic sample (not necessarily independent) from the $\Ed(\theta)$-distribution. Replacing the moments in Equations~\eqref{eq:cfg}, \eqref{eq:mad} and \eqref{eq:root} by their empirical counterparts and solving the equation for $\theta$, we obtain the following three estimators for $\theta$:
\begin{align*}
\hat \theta_{\CFG} (\chi_m) 
&= 
e^{-\gamma} \exp\Big\{-\frac{1}{m} \sum_{i=1}^{m} \log( \xi_i) \Big\},  \\
\hat \theta_{\MAD}(\chi_m) 
&= 
\frac{\frac{1}{m} \sum_{i=1}\ho{m} \exp(-\xi_i)}{1- \frac{1}{m} \sum_{i=1}\ho{m} \exp(-\xi_i)} , \\
\hat \theta_{{\ROOT},p}(\chi_m)
&= 
\Gamma(1+1/p)\ho{p} \Big( \frac{1}{m} \sum_{i=1}\ho{m} \xi_i\ho{1/p} \Big)\ho{-p},
\end{align*}
where $p>0$. It may be verified that 
$
\lim_{p\to\infty} \hat \theta_{{\ROOT},p}(\chi_m) = \hat \theta_{\CFG} (\chi_m),
$
see also \eqref{eq:root2} for another relationship between the two estimators.
Next, replacing $\chi_m$ by any of the four samples $\mathcal Y^{\djb}_n, \mathcal Z^{\djb}_n, \mathcal Y^{\slb}_n$ or $\mathcal Z^{\slb}_n$ defined in Sections~\ref{subsec:djbsample} and \ref{subsec:slbsample}, we finally arrive at 12 method of moments estimators for $\theta$. We use the suggestive notations \[
\thetahat{\djb}{\CFG}{y_n} = \hat \theta_{\CFG}(\mathcal Y^{\djb}_n),  \qquad
\thetahat{\slb}{\MAD}{z_n}  = \hat \theta_{\MAD}(\mathcal Z^{\slb}_n)
\]
to, e.g., denote the disjoint blocks CFG-estimator based on the $\hat Y_{ni}$ and the sliding blocks madogram-estimator based on the $\hat Z_{ni}$, respectively.  Note that the four estimators of the form 
$\thetahat{\m}{\ROOT, 1}{y_n}, \thetahat{\m}{\ROOT, 1}{z_n}, \m \in \{\djb, \slb\},$  are the (pseudo) maximum likelihood (PML) estimators considered in \cite{BerBuc18}.

\section{Mathematical preliminaries}  
\label{sec:cond}

Further mathematical details are necessary before we can state asymptotic results about the estimators defined in the previous section. The asymptotic framework and the conditions are mostly similar as in Section~2 in \cite{BerBuc18}, but will be repeated here for the sake of completeness.

The serial dependence of the time series $(X_s)_{s \in \N}$ will be controlled via mixing coefficients. For two sigma-fields $\Fc_1, \Fc_2$ on a probability space $(\Omega, \Fc, \Prob)$, let
\begin{align*}
\alpha({\Fc}_1,{\Fc}_2)
&=
\sup_{A \in {\Fc}_1, B\in {\Fc}_2} |\Prob(A\cap B)-\Prob(A)\Prob(B)| .
\end{align*}
In time series extremes, one usually imposes assumptions on the decay of the mixing coefficients between sigma-fields generated by $\{X_s \ind(X_s > F^\leftarrow (1-\eps_n)): s\le  \ell\}$ and  $\{X_s \ind(X_s > F^\leftarrow(1-\eps_n)): s \ge \ell+k \}$, where $\eps_n \to 0$ is some sequence reflecting the fact that only the dependence in the tail needs to be restricted (see, e.g., \citealp{Roo09}). As in \cite{BerBuc18}, we need a slightly stronger condition, that also controls the dependence between the smallest of all block maxima. More precisely, for $-\infty \le p < q \le \infty$ and $\eps\in(0,1]$, let $\Bc_{p:q}^\eps$  denote the sigma algebra generated by $U_s^\eps:=U_s \ind(U_s > 1- \eps)$ with $s\in \{p, \dots, q\}$ and define, for $\ell\ge 1$,
\[
\alpha_{\eps}(\ell) = \sup_{k \in \N} \alpha(\Bc_{1:k}^\eps, \Bc_{k+\ell:\infty}^\eps).
\] 
In Condition~\ref{cond:BerBuc}(iii) below, we will impose a condition on the decay of the mixing coefficients for small values of $\eps$. Note that the coefficients are bounded by the standard alpha-mixing coefficients of the sequence $U_s$, which can be retrieved for $\eps=1$.

The extremes of a time series may be conveniently described by the point process of normalized exceedances. The latter is defined, 
for a Borel set $A\subset E:= (0,1]$ and a number $x\in[0,\infty)$, by
\[
N_n^{(x)}(A) = \sum_{s=1}^n \ind(s/n \in A, U_s > 1-x/n).
\]
Note that $N_{n}^{(x)}(E)=0$ iff $N_{1:n} \le 1-x/n$; the probability of that event converging to $e^{-\theta x}$ under the assumption of the existence of the extremal index~$\theta$.

Fix $m\ge 1$ and $x_1> \dots > x_m>0$. For $1\le p < q \le n$, let
$\Fc_{p:q,n}^{(x_1, \dots, x_m)}$ denote the sigma-algebra generated by the events $\{U_i > 1- x_j/n\}$ for $p\le i \le q$ and  $1 \le j \le m$. For $1\le \ell \le n$, define
\begin{multline*}
\alpha_{n,\ell}(x_1, \dots, x_m)=\sup\{ |\Prob(A\cap B) - \Prob(A) \Prob(B)| : \\
A \in \Fc_{1:s,n}^{(x_1, \dots, x_m)}, B \in \Fc_{s+\ell:n,n}^{(x_1, \dots, x_m)}, 1 \le s \le n-\ell\}.
\end{multline*}
The condition $\Delta_n(\{u_n(x_j)\}_{1\le j \le m})$ is said to hold if there exists a sequence $(\ell_n)_n$ with $\ell_n=o(n)$ such that $\alpha_{n,\ell_n}(x_1, \dots, x_m) =o(1)$ as $n\to\infty$. 
A sequence $(q_n)_n$ with $q_n=o(n)$ is said to be $\Delta_n(\{u_n(x_j)\}_{1\le j \le m})$-separating if there exists a sequence $(\ell_n)_n$ with $\ell_n=o(q_n)$ such that $nq_n^{-1} \alpha_{n,\ell_n}(x_1, \dots,x_m) =o(1)$ as $n\to\infty$. 
If $\Delta_n(\{u_n(x_j)\}_{1\le j \le m})$ is met, then such a sequence always exists, simply take $q_n=\ip{\max\{ n\alpha_{n,\ell_n}^{\scs 1/2}, (n\ell_n)^{\scs 1/2}\}}.$ 
 
 By Theorems 4.1 and 4.2 in \cite{HsiHusLea88},
if the extremal index exists and the $\Delta(u_n(x))$-condition is met ($m=1$), then a necessary and sufficient condition for weak convergence of $N_n^{\scriptscriptstyle(x)}$ is convergence of the conditional distribution of $N_{n}^{\scriptscriptstyle(x)}(B_n)$ with $B_n=(0,q_n/n]$ given that there is at least one exceedance of $1-x/n$ in $\{1, \dots, q_n\}$ to a probability distribution $\pi$ on $\N$, that is,
\[
\lim_{n\to\infty} \Prob(N_n^{(x)} (B_n) = j \mid N_{n}^{(x)}(B_n)>0) = \pi(j)  \qquad \forall \, j\ge 1,
\]
where $q_n$ is some $\Delta(u_n(x))$-separating sequence. Moreover, in that case, the convergence in the last display holds for any $\Delta(u_n(x))$-separating sequence $q_n$, and the weak limit of $N_n^{\scriptscriptstyle(x)}$ is a compound poisson process $\mathrm{CP}(\theta x, \pi)$. If the $\Delta(u_n(x))$-condition holds for any $x>0$, then $\pi$ does not depend on $x$ (\citealp{HsiHusLea88}, Theorem~5.1). 
 
A multivariate version of the latter results is stated in \cite{Per94}, see also the summary in \cite{Rob09}, page 278, and the thesis \cite{Hsi84}. Suppose that the extremal index exists and that  the $\Delta(u_n(x_1), u_n(x_2))$-condition is met for any $x_1\ge x_2\ge0, x_1 \ne0$. Moreover, assume that there exists a family of probability measures $\{\pi_2^{\scs(\sigma)}: \sigma\in [0,1]\}$ on $\Jc = \{(i,j): i \ge j \ge 0, i \ge 1\}$, such that, for all $(i,j) \in \Jc$,
\[
\lim_{n\to\infty} \Prob(N_n^{(x_1)} (B_n) = i, N_n^{(x_2)} (B_n) = j \mid N_{n}^{(x_1)}(B_n) >0) = \pi_2^{(x_2/x_1)}(i,j),
\]
where $q_n$ is some $\Delta(u_n(x_1), u_n(x_2))$-separating sequence.  In that case, the two-level point process $\bm N_n^{\scriptscriptstyle(x_1,x_2)}=(N_n^{\scriptscriptstyle(x_1)}, N_{n}^{\scriptscriptstyle(x_2)})$ converges in distribution to a point process with characterizing Laplace transform explicitly stated in \cite{Rob09} on top of page 278. Note that
\[
\pi_2^{(1)}(i,j)=\pi(i) \ind(i=j), \qquad \pi_2^{(0)}(i,j) = \pi(i) \ind(j=0).
\]

Finally, we will need the tail empirical pocess
\begin{align} \label{eq:tep}
	e_n(x)= \frac{1}{\sqrt{k_n}} \sum_{s=1}^{n} 
	\left\{ \I \bigg(U_s > 1-\frac{x}{b_n}\bigg) -\frac{x}{b_n} \right\} , \qquad x \ge 0,
\end{align}
where $U_s=F(X_s)$, see, e.g., \cite{Dre00, Roo09}.

The following set of conditions will be imposed to establish asymptotic normality of the estimators.

	\begin{cond} \label{cond:BerBuc}~
	
	\begin{enumerate}
	 \item[(i)] The stationary time series $(X_s)_{s\in \N}$ has an extremal index $\theta \in (0,1]$ and the above assumptions guaranteeing convergence of the one- and two-level point process of exceedances are satisfied.
	 \item[(ii)] There exists  $\delta >0$ such that, for any $m >0$, there exists a constant $\tilde{C}_m$ such that
	 \[ \ \ \ \ \ \ \ \ \Exp\big[|N_n\ho{(x_1)}(E)-N_n\ho{(x_2)}(E)|\ho{2+\delta}\big] \leq \tilde{C}_m(x_2-x_1) \ \textrm{ for all } 0 \leq x_1 
	 \leq x_2 \leq m,  n \in \N. \]
	 \item[(iii)] There exist constants $c_2 \in (0,1)$ and $C_2 >0$ such that \[ \alpha_{c_2}(m) \leq C_2m\ho{-\eta} \]
	 for some $\eta \geq 3(2+\delta)/(\delta - \mu)>3,$ where $0<\mu<\min(\delta,1/2)$ and $\delta >0$ is from Condition (ii).
	 The block size $b_n$ converges to infinity and satisfies 
	 \[ 
	 k_n = o(b_n\ho{2}), \ \ n \to \infty. 
	 \] 
	 Further, there exists a sequence 
	 $\ell_n \to \infty$ with $\ell_n =o(b_n\ho{2/(2+\delta)})$ and $k_n\alpha_{c_2}(\ell_n) =o(1)$ as $n \to\infty$.
	 \item[(iv)] There exist constants $c_1 \in (0,1)$ and $C_1 >0$ such that, for any $y \in (0,c_1)$ and $n \in \N$,
	 \[ \Var \left\{ \sum_{s=1}\ho{n} \I(U_s >1-y) \right\} \leq C_1 (ny+n\ho{2}y\ho{2}). \]
	 \item[(v)] For any $c \in (0,1)$, one has 
	 \[ \lim_{n \to \infty} \Pro \Big( \min_{i=1,\ldots,2k_n} N_{ni}' \leq c \Big) =0, \]
	 where $N_{ni}'=\max \{ U_s, \ s \in [(i-1)b_n/2+1,\ldots,ib_n/2]\}$ for $i=1,\ldots,2k_n$.
	 \item[(vi)] For any $x>0$, 
	 \[ \lim_{m \to \infty} \limsup_{n \to \infty} \Pro \left( N_{m:b_n} > 1- \frac{x}{n} \,\Big|\, U_1 \geq 1-\frac{x}{n} \right) =0. \]
	\end{enumerate}
	\end{cond}

	\begin{cond}[Integrability] \label{cond2}~
	\begin{enumerate}
	 \item[(i)] With $\delta >0$ from Condition \ref{cond:BerBuc}(ii), one has \[ \limsup_{n \to \infty} \Exp \big[|\log(Z_{1:n})|\ho{2+\delta}\big] < \infty. \]
	 \item[(ii)] Fix $p>0$. With $\delta >0$ from Condition \ref{cond:BerBuc}(ii), one has 
		 \[ 
		 \limsup_{n \to \infty} \Exp \big[Z_{1:n}\ho{(2+\delta)/p}\big] < \infty. 
		 \]
	\end{enumerate}
	\end{cond}
	
	\begin{cond}[Bias Condition] \label{cond3} Recall $\varphi_{(\rm C)}, \varphi_{(\rm M)}$ and $\varphi_{(\rm R),p}$ defined in \eqref{eq:cfg}, \eqref{eq:mad} and \eqref{eq:root}, respectively. 
	\begin{enumerate}
	 \item[(i)] As $ n \to \infty$, $ \Exp[\log(Z_{1:b_n})] = \varphi_{(\rm C)}(\theta) +o\big( k_n\ho{-1/2} \big)$.	 
	 \item[(ii)] As $ n \to \infty$, $ \Exp[\exp(-Z_{1:b_n})] = \varphi_{(\rm M)}(\theta) +o\big( k_n\ho{-1/2} \big)$.
	 \item[(iii)] Fix $p>0$. As $ n \to \infty$, $ \Exp\big[Z_{1:b_n}\ho{1/p}\big] = \varphi_{(\rm R),p}(\theta)+o\big( k_n\ho{-1/2} \big)$.
	 	\end{enumerate}
	\end{cond}

         \begin{cond}[Technical Condition for the CFG-type estimator] \label{cond_CFG}~
         \begin{enumerate}[label= \upshape (\roman*), ref= \thecond (\roman*)]
         \item \label{cond:bnkn2}
         For some $q>1/2$, we have $b_n = O(k_n^q)$ as $n\to\infty$.
         \item \label{cond_gewKonv}
         For some  $\tau \in (0,1/2)$, we have, as $n\to\infty$,           
         \[ 
         \left\{ \frac{e_n(x)}{x\ho{\tau}} \right\}_{x \in [0,1]} \wto \left\{ \frac{e(x)}{x\ho{\tau}} 
                                   \right\}_{x \in [0,1]}
                                  \quad \textrm{ in } D([0,1]),
                                \]
                                the c\`agl\`ad space of functions on $[0,1]$,
                                                                where $e_n$ 
                               denotes the tail empirical process defined in \eqref{eq:tep} and where $e$ is a centered Gaussian process with continuous sample paths and covariance as given in Lemma \ref{lem:cfg-fidis}.
        \item \label{cond_max} 
	 For any $c>0$,  we have, as $n \to \infty$,
         \[ 
         \max_{Z_{ni} \geq c} \left| \frac{e_n(Z_{ni})}{Z_{ni}\sqrt{k_n}} \right|
         = o_\Prob(1).
         \]                
         \item \label{cond_ewerte}
       For any $c>0$, there exists $\mu=\mu_c \in (1/2,1/\{2(1-\tau)\})$ with $\tau$ 
         from (ii) such that,  as $n\to\infty$, 
         \[ 
         \Prob(  Z_{n1} < c k_n\ho{-\mu} )
         - \Prob(  \xi < c k_n\ho{-\mu} )= o\big(\log(n)\ho{-1} k_n^{-1/2}\big), \ \textrm{ where } \xi \sim \Ed(\theta). \]  
         \end{enumerate}                      
         \end{cond}

	The items of Condition \ref{cond:BerBuc} are the same as Condition 2.1(i)-(v) and (2.2) in \cite{BerBuc18} and are discussed in great detail in that reference. 
	Condition~\ref{cond2} is needed for uniform integrability of the sequences $Z_{n1}^{\scs 2/p}$ and $\log^2 Z_{n1}$, respectively. It implies 
	\[
	 \lim_{n\to\infty}\Var(Z_{n1}^{1/p}) = \Var(\xi^{1/p}), \qquad  \lim_{n\to\infty}\Var(\log Z_{n1}) = \Var(\log \xi),
	 \] 
	 respectively, where $\xi$ denotes an exponentially distributed random variable with parameter $\theta$. 
	Condition~\ref{cond3} is a bias condition requiring the approximation of the first moment of $f(Z_{n1})$  by $\Exp[f(\xi)]$  to be sufficiently accurate, where $f(x) \in \{x^{1/p}, \exp(-x), \log x\}$.
	
	Condition~\ref{cond_CFG} is a technical condition which is only needed for deriving the asymptotics of the CFG-estimator. The Condition~\ref{cond:bnkn2} requires $b$ to be not too large.
 Sufficient conditions for Condition \ref{cond_gewKonv} in terms of beta mixing coefficients can be found in \cite{Dre00}. 	A sufficient condition for Condition~\ref{cond_max} is for instance strong mixing with polynomial rate $\alpha_1(n) = O(n\ho{-(1+\surd 2)-\varepsilon}), n\to\infty,$ for some $\varepsilon>0$, together with Condition~\ref{cond:bnkn2} being met with $q<1/(\sqrt2-1) \approx 2.41$. Indeed, 
  for any $x \geq c$ and $\eta>0$, one can write
         \begin{align}
          \frac{e_n(x)}{x} &= \frac{1}{\sqrt{k_n}} \sum_{s=1}\ho{n} \left\{ \iu - \frac{x}{b_n} \right\} \frac{1}{x} 
          = - b_n\ho{1/2-\eta} \ \mathbb{U}_{n, \eta}\Big( 1- \frac{x}{b_n} \Big) \frac{1}{x\ho{1-\eta}}, \nonumber
         \end{align}
         where \[ \mathbb{U}_{n, \eta}(u) 
         = \frac{\frac{1}{\sqrt{n}} \sum_{s=1}\ho{n} \left\{ \I(U_s \leq u)-u \right\}}{(1-u)\ho{\eta}} \ind_{(0,1)}(u).
         \]
         By Theorem 2.2 in \cite{ShaYu96}, we have $\sup_{x\ge 0} | \mathbb{U}_{n, \eta}(1-x/b_n) |  =O_\Prob(1)$ for all $\eta \leq 1-2\ho{-1/2} \approx 0.29$. Hence,  by Condition~\ref{cond:bnkn2},
         \[
          \max_{Z_{ni} \geq c} \left| \frac{e_n(Z_{ni})}{Z_{ni}\sqrt{k_n}} \right| = O_\Prob\Big( \frac{b_n^{1/2-\eta}}{\sqrt{k_n}} \Big) = 
          O_\Prob\Big( k_n^{q(1/2-\eta) - 1/2} \Big).
         \]
        The expression on the right-hand side is $o_\Prob(1)$ if we choose $\eta\in(1/2-1/\{2q\}, 1-2^{-1/2}]$; note that the latter interval is non-empty since $q<1/(\sqrt 2-1)$.
        Finally, Condition~\ref{cond_ewerte} is another technical condition requiring the approximation of the law of $Z_{n1}$ by the exponential distribution to be sufficiently accurate in the lower tail.

\section{Asymptotic Results}
\label{sec:asy}

We present asymptotic results on all estimators defined in Section~\ref{sec:def}. For simplicity, all results are stated and proved for the $\hat Z_{ni}$-versions only. As in Theorem 3.1 in \cite{BerBuc18}, it may be verified that the respective versions based on $\hat Y_{ni}$ show the same asymptotic behavior as the $\hat Z_{ni}$-versions.  
Throughout, for $z\in(0,1)$, let $(\xi_1^{(z)},\xi_2^{(z)}) \sim \pi_2^{(z)}$.

\begin{thm} 
\label{theo:cfg}
Under Condition \ref{cond:BerBuc}, \ref{cond2}(i), \ref{cond3}(i) and \ref{cond_CFG},
we have 
\[
\sqrt{k_n} ( \thetahat{\m}{\CFG}{z_n}-\theta) 
\wto 
\Nor(0, \sigma^2_{\rm m, \rm C})
\] 
 for ${\rm m} \in \{\djb, \slb\}$ and as $n\to\infty$, where
\begin{align*}
\sigma^2_{\djb, \rm C} 
& =
2 \theta^3 \int_{0}^{1} \frac{\theta  \Exp[ \xi_1^{(z)} \xi_2^{(z)}] -  \Exp[ \xi_1^{(z)} \I(\xi_2^{(z)}>0)]}{z(1+z)} \diff z 
 + \big\{ \pi^2/6 - 2\log(2) \} \theta^2, \\
\sigma_{\slb, \rm C}\ho{2} 
&=  
\sigma^2_{\djb, \rm C}  - \{ \pi\ho{2}/{6} - 8\log(2) + 4\} \theta^2. 
\end{align*}
\end{thm}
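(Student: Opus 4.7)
The plan is to apply the delta method to $\thetahat{\m}{\CFG}{z_n} = \exp\{-\gamma - \bar L_n^{\m}\}$, where $\bar L_n^{\m}$ denotes the arithmetic mean of $\log \hat Z_{ni}$ over $i = 1, \ldots, k_n$ (or, for $\m = \slb$, of $\log \hat Z_{nt}^{\slb}$ over $t = 1, \ldots, n-b_n+1$). Since the map $\mu \mapsto e^{-\gamma-\mu}$ has derivative $-\theta$ at $\mu = \varphi_{(\rm C)}(\theta) = -\log\theta - \gamma$, the problem reduces to establishing $\sqrt{k_n}(\bar L_n^{\m} - \varphi_{(\rm C)}(\theta)) \wto \Nor(0, \sigma^2_{\m, \rm C}/\theta^2)$. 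The core decomposition is $\log \hat Z_{ni} - \varphi_{(\rm C)}(\theta) = [\log Z_{ni} - \varphi_{(\rm C)}(\theta)] + \log(\hat Z_{ni}/Z_{ni})$. Starting from $1 - \hat F_n(M_{ni}) = (n+1)^{-1}\{1 + \sum_s \I(U_s > 1 - Z_{ni}/b_n)\}$, one obtains the identity $\hat Z_{ni} = Z_{ni} + e_n(Z_{ni})/\sqrt{k_n} + k_n^{-1} + O(n^{-1})$, so a Taylor expansion on $\{Z_{ni} \geq c\}$ yields $\log(\hat Z_{ni}/Z_{ni}) = e_n(Z_{ni})/(Z_{ni}\sqrt{k_n}) + O_\Pro((k_n Z_{ni})^{-1})$. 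The remainder is uniformly $o_\Pro(k_n^{-1/2})$ by Condition~\ref{cond_max}, whereas Condition~\ref{cond_ewerte} will be invoked to rule out a non-negligible contribution from very small block maxima.

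The $k_n^{-1/2}\sum_i [\log Z_{ni} - \varphi_{(\rm C)}(\theta)]$ contribution can be handled almost verbatim along the lines of Theorem~3.1 in \cite{BerBuc18}: Condition~\ref{cond3}(i) absorbs the bias, asymptotic normality of the sample mean follows from the two-level point process convergence in Condition~\ref{cond:BerBuc}(i), and uniform integrability of $\log^2 Z_{n1}$ is secured by Condition~\ref{cond2}(i). Since $\Var(\log \xi) = \pi^2/6$ for $\xi \sim \Ed(\theta)$, this step alone already yields the $(\pi^2/6)\theta^2$ summand in $\sigma^2_{\djb, \rm C}$.

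The principal obstacle is the empirical-process correction $k_n^{-1/2}\sum_i e_n(Z_{ni})/Z_{ni}$. I would view this as a Lebesgue--Stieltjes integral $\int_0^\infty e_n(x)/x \, \mathrm{d}\hat G_n(x)$ with $\hat G_n$ the empirical c.d.f.\ of $Z_{n1}, \ldots, Z_{nk_n}$, and establish joint weak convergence of $e_n$, $\hat G_n$ and the bivariate point process of exceedances. The singularity of $1/x$ at the origin is exactly what forces the \emph{weighted} tightness demanded by Condition~\ref{cond_gewKonv}; combined with Conditions~\ref{cond_max} and \ref{cond_ewerte} controlling the lower tail of $\hat G_n$, one obtains convergence of the integral to a corresponding integral of the limiting Gaussian process against the limiting exponential law. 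The cross-covariance between the block maxima and the tail empirical process, encoded through the bivariate cluster measures $\pi_2^{(z)}$, after a change of variables $z = x_2/x_1$, produces the integral $\int_0^1 [\theta\Exp(\xi_1^{(z)}\xi_2^{(z)}) - \Exp(\xi_1^{(z)}\I(\xi_2^{(z)}>0))]/[z(1+z)] \, \mathrm{d}z$ in $\sigma^2_{\djb, \rm C}$, while the self-covariance of the correction alone contributes the $-2\log(2)\theta^2$ summand.

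For the sliding blocks version, the same decomposition applies with $i$ replaced by $t = 1, \ldots, n - b_n + 1$ and $\hat Z_{ni}$ replaced by $\hat Z_{nt}^{\slb}$. Asymptotic normality still follows from the same point-process and tail-empirical-process machinery; the additional variance reduction $-(\pi^2/6 - 8\log 2 + 4)\theta^2$ is obtained from a long-run covariance computation analogous to those carried out for sliding blocks estimators in \cite{BerBuc18}, reflecting the smoothing effect of overlapping blocks.
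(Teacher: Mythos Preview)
Your proposal is correct and follows essentially the same route as the paper: the delta-method reduction to $\sqrt{k_n}(\bar L_n^{\m}-\varphi_{(\rm C)}(\theta))$, the three-term decomposition into bias, oracle mean, and empirical-process correction, the representation of the latter as $\int e_n(x)/x\,\mathrm{d}\hat G_n(x)$, and the use of weighted tightness (Condition~\ref{cond_gewKonv}) together with Conditions~\ref{cond_max} and \ref{cond_ewerte} to control the singularity at the origin all match the paper's argument, which is organized via a truncation at $[1/m,m]$ and Wichura's theorem. One small misattribution: the $-2\log(2)\theta^2$ summand in $\sigma^2_{\djb,\rm C}$ does not arise from the self-covariance of the correction term but from the cross-covariance between the tail empirical process contribution and the $\log Z_{ni}$ term (the self-covariance yields the $2\theta^4\int \Exp[\xi_1^{(z)}\xi_2^{(z)}]/\{z(1+z)\}\,\mathrm{d}z$ piece, and the cross-term produces both $-2\log(2)\theta^2$ and the $-\Exp[\xi_1^{(z)}\I(\xi_2^{(z)}>0)]$ part of the integral).
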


\begin{thm} 
\label{theo:mad}
Under Condition \ref{cond:BerBuc} and \ref{cond3}(ii), we have 
\[
\sqrt{k_n} (\thetahat{\m}{\MAD}{z_n}-\theta) 
\wto 
\Nor(0, \sigma^2_{m, \rm M})
\] 
 for ${\rm m} \in \{\djb, \slb\}$ and as $n\to\infty$, where
\begin{align*}
\sigma_{\djb, \rm M}\ho{2} 
&= 4 \theta^2(1+\theta) \intne \frac{ \theta \Exp[\xi_1\ho{(z)} \xi_2\ho{(z)}] - \Exp[\xi_1\ho{(z)} \I(\xi_2\ho{(z)}>0)]}{(1+z)\ho{3}}
 \ \mathrm{d}z 
  + \frac{\theta^2(1+\theta)}{2(2+\theta)} \\ 
\sigma_{\slb, \rm M}\ho{2} &= \sigma_{\djb, \rm M}\ho{2} 
- \frac{3 \theta^2 + 4 \theta - 4(1+\theta)(2+\theta) \log\{2(1+\theta)/(2+\theta)\} }{\theta(2+\theta)(1+\theta)^2}.
\end{align*}
\end{thm}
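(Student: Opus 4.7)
The estimator can be written as $\thetahat{\rm m}{\MAD}{z_n}=g(\bar S_n^{\rm m})$, where $\bar S_n^{\rm m}$ denotes the empirical mean of the sample $\{\exp(-\xi):\xi\in\mathcal Z_n^{\rm m}\}$ and $g(y)=y/(1-y)$ inverts $\varphi_{(\rm M)}$, with $g'(\varphi_{(\rm M)}(\theta))=(1+\theta)^2$. The plan is to apply the delta method, which reduces the claim to
\[
\sqrt{k_n}\bigl(\bar S_n^{\rm m}-\varphi_{(\rm M)}(\theta)\bigr)\wto \Nor\bigl(0,\sigma^2_{{\rm m},\rm M}/(1+\theta)^4\bigr), \qquad {\rm m}\in\{\djb,\slb\},
\]
followed by an explicit calculation of this limiting variance.

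To handle the empirical c.d.f.\ hidden in $\hat Z_{ni}$, I would exploit the elementary identity
\[
\hat Z_{ni} = \frac{n}{n+1}Z_{ni} + \frac{n}{(n+1)\sqrt{k_n}}\,e_n(Z_{ni}) + \frac{b_n}{n+1},
\]
where $e_n$ is the tail empirical process from \eqref{eq:tep}. Since $y\mapsto e^{-y}$ is bounded by $1$ and $1$-Lipschitz on $[0,\infty)$, a first-order Taylor expansion yields the linearization
\[
\sqrt{k_n}\bigl(\bar S_n^{\djb}-\varphi_{(\rm M)}(\theta)\bigr) = \frac{1}{\sqrt{k_n}}\sum_{i=1}^{k_n}\bigl(\exp(-Z_{ni})-\varphi_{(\rm M)}(\theta)\bigr) - \frac{1}{k_n}\sum_{i=1}^{k_n} \exp(-Z_{ni})\,e_n(Z_{ni}) + o_\Prob(1),
\]
with the obvious sliding-block analogue. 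These remainder bounds are considerably more elementary than for the CFG-type estimator precisely because $\exp(-\cdot)$ is bounded, which explains why Conditions~\ref{cond2} and~\ref{cond_CFG} are not needed here. Joint weak convergence of the two displayed sums to a bivariate Gaussian limit then follows from the big-block/small-block machinery developed in \cite{BerBuc18}, combined with the convergence of the one- and two-level point processes of normalized exceedances supplied by Condition~\ref{cond:BerBuc}(i); for the sliding-blocks version, a H\'ajek-projection--type argument (as in Section~3 of \cite{BerBuc18}) additionally extracts the explicit negative correction term appearing in $\sigma^2_{\slb,\rm M}$.

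The hard part is the bookkeeping in the variance computation. Using the two-level point-process limit $\pi_2^{(z)}$ together with the influence function $x\mapsto \exp(-x)-\theta/(1+\theta)$ associated with the moment condition~\eqref{eq:mad}, one obtains integrals over the interaction level $z=x_2/x_1\in(0,1)$ whose integrand involves $\theta\,\Exp[\xi_1^{(z)}\xi_2^{(z)}]-\Exp[\xi_1^{(z)}\I(\xi_2^{(z)}>0)]$. The weight $(1+z)^{-3}$, in contrast to the $1/\{z(1+z)\}$ of the CFG case, arises from integrating $e^{-x_1}e^{-x_2}$ against the exponential tail density at the two levels. Combining this integral contribution with $\Var(\exp(-\xi))=\theta/\{(2+\theta)(1+\theta)^2\}$ for $\xi\sim\Ed(\theta)$, the cross-contribution from the empirical-c.d.f.\ correction, and the delta-method factor $(1+\theta)^4$ should then produce the announced bias-like term $\theta^2(1+\theta)/\{2(2+\theta)\}$ as well as the explicit sliding-blocks reduction.
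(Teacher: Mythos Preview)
Your proposal is correct and follows essentially the same route as the paper: delta method with $g'(\varphi_{(\rm M)}(\theta))=(1+\theta)^2$, the three-term decomposition $A_n+B_n+C_n$ (with $C_n=o(1)$ by Condition~\ref{cond3}(ii)), representation of $A_n$ through the tail empirical process $e_n$, and a joint CLT for $(A_n,B_n)$ via big-block/small-block arguments, with the limit variance identified through the two-level cluster distribution~$\pi_2^{(z)}$. The only cosmetic difference is that the paper keeps the exact integral form $A_n=\int_0^\infty W_n(x)\,\mathrm d\hat H_{k_n}(x)$ with $W_n(x)=\sqrt{k_n}\,e^{-x}\{\exp(-e_n(x)/\sqrt{k_n})-1\}$ and reuses the Wichura-truncation scheme from the CFG proof rather than linearizing immediately as you do; but---exactly as you note---the boundedness and Lipschitz property of $e^{-\cdot}$ make that step nearly trivial here, which is why the paper merely remarks that the argument is ``similar (but easier)'' and omits the details.
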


\begin{thm} 
\label{theo:root}
Fix $p>0$. Under Condition \ref{cond:BerBuc}, \ref{cond2}(ii) and \ref{cond3}(iii),  
\[
\sqrt{k_n} (\thetahat{\m}{\ROOT, p}{z_n}-\theta) 
\wto 
\Nor(0, \sigma^2_{{\rm m}, p})
\] 
 for ${\rm m} \in \{\djb, \slb\}$ and as $n\to\infty$, where
\begin{align*}
\sigma_{\djb, p}\ho{2}
&=
\frac{4p\theta^3}{\Beta(1/p, 1/p)}   \intne
\frac{\theta \Exp[\xi_1\ho{(z)} \xi_2\ho{(z)}] + \Exp[\xi_1\ho{(z)} \I(\xi_2\ho{(z)}=0) ] z\ho{\frac{1}{p}-1}}{(1+z)\ho{ 1+ \frac{2}{p} }} \ \mathrm{d}z \nonumber\\
&\hspace{6cm} + \Big\{ \frac{2p^3}{\Beta(1/p, 1/p)} - p^2- 2p \Big\} \theta^2, \nonumber\\
\sigma_{\slb, p}\ho{2}
&=
\sigma_{\djb, p}\ho{2}  -  \bigg[ p\ho{2} + \frac{2p\ho{3}}{\Beta(1/p,1/p)} - \frac{4p}{\Gamma(1/p)\ho{2}} \intnu (1-e\ho{-z}) z\ho{1/p-2} \Gamma(1/p,z) \ \mathrm{d}z \bigg]  \theta^2 , \nonumber
\end{align*}
 where $\Beta(x,y) = \int_0^1 t^{x-1} (1-t)^{y-1} \diff t$ is the Beta-function and $\Gamma(x,z)=\int_x^\infty t^{z-1} e^{-t} \diff t$ is the incomplete gamma function.
\end{thm}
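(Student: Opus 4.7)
The plan is to apply the delta method to the inverse of $\varphi_{(\rm R),p}$. Writing $g(x) = \Gamma(1+1/p)\ho{p} x\ho{-p}$, one has $\thetahat{\m}{\ROOT,p}{z_n} = g(T_n^{(\m)})$, where $T_n^{(\djb)} = k_n\ho{-1} \sum_{i=1}\ho{k_n} \hat Z_{ni}\ho{1/p}$ and $T_n^{(\slb)} = (n-b_n+1)\ho{-1} \sum_{t=1}\ho{n-b_n+1} (\hat Z\ho{\slb}_{nt})\ho{1/p}$. With $\varphi = \varphi_{(\rm R),p}(\theta) = \theta\ho{-1/p}\Gamma(1+1/p)$ and $g'(\varphi)\ho{2} = p\ho{2}\theta\ho{2(p+1)/p}/\Gamma(1+1/p)\ho{2}$, a standard Taylor argument reduces the theorem to a central limit theorem $\sqrt{k_n}(T_n^{(\m)} - \varphi) \wto \Nor(0, \sigma\ho{2}_{\m, p}/g'(\varphi)\ho{2})$. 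For this, I would use the decomposition $\sqrt{k_n}(T_n^{(\m)} - \varphi) = A_n + B_n + C_n$ with
\begin{align*}
A_n &= \sqrt{k_n}\bigl(T_n^{(\m)} - \bar T_n^{(\m)}\bigr), \\
B_n &= \sqrt{k_n}\bigl(\bar T_n^{(\m)} - \Exp[Z_{1:b_n}\ho{1/p}]\bigr), \\
C_n &= \sqrt{k_n}\bigl(\Exp[Z_{1:b_n}\ho{1/p}] - \varphi\bigr),
\end{align*}
where $\bar T_n^{(\m)}$ is the oracle statistic obtained by substituting $Z_{ni}$ (resp.\ $Z\ho{\slb}_{nt}$) for $\hat Z_{ni}$ (resp.\ $\hat Z\ho{\slb}_{nt}$).

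The bias $C_n$ vanishes in the limit directly by Condition~\ref{cond3}(iii). For the oracle term $B_n$, a big-blocks/small-blocks argument using the mixing rate of Condition~\ref{cond:BerBuc}(iii), the variance inequality Condition~\ref{cond:BerBuc}(iv), and the $(2+\delta)/p$-th moment bound from Condition~\ref{cond2}(ii) yields a Lindeberg-type CLT. The limiting covariance between block statistics is computed from the two-level point process limit $\pi_2\ho{(z)}$ by exploiting the layer-cake representation $Z_{ni}\ho{1/p} = p\ho{-1}\int_0\ho{\infty} x\ho{1/p-1}\I(Z_{ni} > x)\diff x$, applying Fubini, and substituting $z = x_2/x_1$; this recovers the integral over $(0,1)$ in $\sigma\ho{2}_{\djb, p}$. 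The within-block variance contribution $g'(\varphi)\ho{2}\Var(\xi\ho{1/p}) = \{2p\ho{3}/\Beta(1/p,1/p) - p\ho{2}\}\theta\ho{2}$ for $\xi \sim \Ed(\theta)$ accounts for two of the three constants in the non-integral piece, using the identity $\Gamma(1+2/p)/\Gamma(1+1/p)\ho{2} = 2p/\Beta(1/p,1/p)$.

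The main technical difficulty is the term $A_n$, which encodes the cost of replacing $F$ by $\hat F_n$. A first-order Taylor expansion gives
\[
\hat Z_{ni}\ho{1/p} - Z_{ni}\ho{1/p} = \tfrac{1}{p}\, \tilde Z_{ni}\ho{1/p-1}\, b_n(N_{ni} - \hat N_{ni}),
\]
for some intermediate $\tilde Z_{ni}$, and $b_n(N_{ni} - \hat N_{ni}) = e_n(Z_{ni})/\sqrt{k_n} + O_\Prob(1/k_n)$ where $e_n$ is the tail empirical process of \eqref{eq:tep} (the remainder coming from the $(n+1)\ho{-1}$-normalization in $\hat F_n$). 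Thus $A_n$ is asymptotically equivalent to $p\ho{-1} k_n\ho{-1} \sum_i Z_{ni}\ho{1/p-1} e_n(Z_{ni})$, which is jointly Gaussian with $B_n$ via joint weak convergence of $e_n$ and of the empirical measure of the $Z_{ni}$. Computing $\lim \Var(A_n)$ and $\lim \Cov(A_n, B_n)$ from the same point-process ingredients supplies the remaining summand $-2p\theta\ho{2}$ in $\sigma\ho{2}_{\djb, p}$. For the sliding version $\m = \slb$ the same strategy is followed, but overlapping averages introduce extra covariances between $(Z\ho{\slb}_{nt})\ho{1/p}$ at neighbouring starting points; their evaluation via the joint exponential-tail asymptotics of nearby sliding maxima produces, through another Fubini/change-of-variables identity, the correction $\int_0\ho{\infty}(1-e\ho{-z}) z\ho{1/p-2}\Gamma(1/p, z)\diff z$ in $\sigma\ho{2}_{\slb, p} - \sigma\ho{2}_{\djb, p}$.

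The hardest step will be controlling $A_n$ when $p > 1$, since then $Z_{ni}\ho{1/p-1}$ is unbounded near zero. The plan is a truncation at $Z_{ni} \ge c$ for small $c > 0$: the tail contribution is bounded through the moment control of Condition~\ref{cond2}(ii) combined with the variance inequality in Condition~\ref{cond:BerBuc}(iv), noting that the singularity exponent $1/p - 1 > -1$ remains integrable against the approximately exponential density of $Z_{n1}$. The sliding case additionally requires the Lindeberg/telescoping device from \cite{BerBuc18} to accommodate overlap. As a sanity check, $p=1$ gives $\Beta(1,1)=1$ and $\Gamma(1,z) = e\ho{-z}$, so that $\sigma\ho{2}_{\m, 1}$ reduces to the pseudo-MLE variances in Theorem~3.1 of \cite{BerBuc18}.
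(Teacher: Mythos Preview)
Your proposal is correct and follows essentially the same route as the paper: delta method applied to $\varphi_{(\rm R),p}^{-1}$, the decomposition $A_n+B_n+C_n$, disposal of $C_n$ via Condition~\ref{cond3}(iii), a big-block/small-block CLT for $B_n$ based on the $(2+\delta)/p$-moment bound, and a representation of $A_n$ through the tail empirical process $e_n$ together with the two-level point-process limits $\pi_2^{(z)}$ to identify the variance; for the sliding version the paper likewise uses the blocking-of-blocks device from \cite{BerBuc18}. The only cosmetic difference is that, instead of your termwise mean-value expansion with an intermediate point $\tilde Z_{ni}$, the paper writes $A_n=\int_0^\infty W_n(x)\,\mathrm d\hat H_{k_n}(x)$ with $W_n(x)=\sqrt{k_n}\{(e_n(x)/\sqrt{k_n}+x)^{1/p}-x^{1/p}\}$ and then applies the extended continuous mapping theorem and Wichura's theorem with truncation to $x\in[1/m,m]$; this packages the linearisation and the control of both tails (near zero for $p>1$, at infinity for $p<1$) into a single argument, whereas your scheme would have to handle both ends separately.
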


It is worthwhile to mention that the imposed conditions in each theorem are exactly the same for the disjoint and the sliding blocks version. Furthermore, apart from the different bias conditions, the conditions regarding $k_n$ are exactly the same in Theorem~\ref{theo:mad} and \ref{theo:root}, and slightly stronger for Theorem~\ref{theo:cfg} in that the additional technical Condition~\ref{cond_CFG} is imposed.

The proofs are provided in Section~\ref{sec:pmaster} and  bear some similarities with  the one of Theorem~3.2 in \cite{BerBuc18}. In particular, they rely on the delta method, Wichura's theorem and empirical process theory to adequately handle the asymptotic contribution of the rank transformation. The most sophisticated proof is the one of Theorem~\ref{theo:cfg}, which is essentially due to the fact that $\Exp[\log \xi]=\int_0^\infty \log(t) \theta e^{-\theta t} \diff t$ is an improper integral both at zero and at infinity (see also \cite{GenSeg09} for similar technical difficulties with the CFG-estimator for the Pickands dependence function in multivariate extremes).

 \begin{figure}
  \begin{center}
   \includegraphics[width = 0.75 \textwidth]{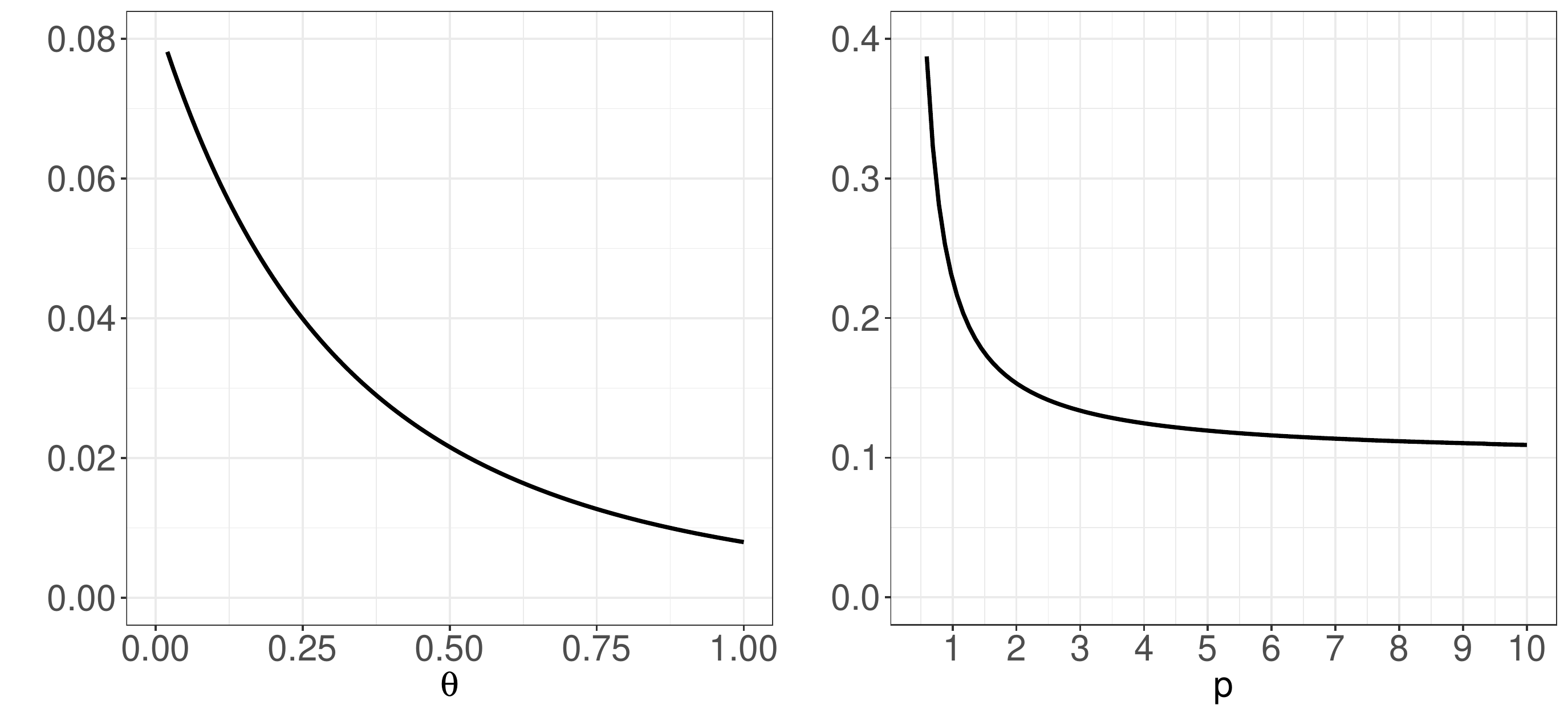}
   \caption{Graph of the functions $\theta \mapsto (\sigma_{\djb, \rm M}\ho{2} -  \sigma_{\slb, \rm M}\ho{2})/\theta^2$ (left) and $p \mapsto (\sigma_{\djb, \rm p}\ho{2} -  \sigma_{\slb, \rm p}\ho{2})/\theta^2$ (right).}
   \label{fig:Mado}
  \end{center}

 \end{figure}

It is worth to mention that the difference 
\[
\mathrm{AsyVar}(\sqrt{k_n} \thetahat{\djb}{\CFG}{z_n} / \theta) - \mathrm{AsyVar}(\sqrt{k_n}\thetahat{\slb}{\CFG}{z_n} / \theta)
:= (\sigma^2_{\djb, \rm C} - \sigma_{\slb, \rm C}\ho{2})/\theta^2 \approx 0.0977
\]
is a universal constant independent of any properties  of the observed time series.
The same holds true for the Root-estimator with a constant depending in a complicated way on the parameter $p$ (the graph of $p \mapsto (\sigma_{\djb, p}\ho{2} -  \sigma_{\slb, p}\ho{2})/\theta^2$ is depicted in Figure~\ref{fig:Mado}, with a value of approximately 0.2274 for the PML-estimator). For the Madogram-estimator, this difference depends on $\theta$ (see Figure~\ref{fig:Mado} for the graph of $\theta \mapsto (\sigma^2_{\djb, \rm M} - \sigma_{\slb, \rm M}\ho{2})/\theta^2$); it is non-negative and decreasing with value  $1/12\approx 0.083$ for $\theta\to0$ and approximately $0.0079$ for $\theta=1$. In that regard, the use of sliding blocks over disjoint blocks is least beneficial for the  Madogram-estimator.

\begin{ex} \label{ex:indep}
In the case that the time series is serially independent, the cluster size distributions are given by $\pi(i)=\I(i=1)$ and $\pi_2^{\scs (z)}(i,j)=(1-z)\I(i=1,j=0)+ z \I(i=1,j=1)$, which implies 
\[ 
\theta =1, \quad E[\xi_1^{(z)} \xi_2^{(z)}] = z 
\quad \textrm{and} \quad E[\xi_1^{(z)}\I(\xi_2^{(z)}=0)] = 1 - z. 
\]
It can be seen that these formulas hold true whenever $\theta = 1$. Consequently, the limiting variances in Theorem~\ref{theo:cfg} and \ref{theo:mad} are equal to
\begin{align*}
\sigma^2_{\djb, \rm C} &= \frac{\pi^2}{6} - 2 \log(2) \approx 0.2586,  
& \sigma^2_{\slb, \rm C} &= 6\log(2) -4 \approx 0.1588, \\
\sigma_{\djb, \rm M}\ho{2} &= 1/3, 
& \sigma_{\slb, \rm M}\ho{2} &\approx 0.32536. 
\end{align*}
It is remarkable that the asymptotic variances are substantially smaller than those of the maximum likelihood estimator,
see Example~3.1 in \cite{BerBuc18}, which are equal to $1/2$ and $0.2726$ for the disjoint and sliding blocks version, respectively. 

The limiting variance in the case of the Root-estimator  is given by
	\begin{align*}
	\sigma\ho{2}_{\djb, p} & = \frac{2p}{\Beta(\tfrac1p, \tfrac1p)}  \left[ p^2 + 2\ho{-2/p}p \right] - p\ho{2}-p, \\
	\sigma\ho{2}_{\slb, p} & = \sigma_{\djb, p}\ho{2}  -  \bigg[ p\ho{2} + \frac{2p\ho{3}}{\Beta(1/p,1/p)} - \frac{4p}{\Gamma(1/p)\ho{2}} \intnu (1-e\ho{-z}) z\ho{1/p-2} \Gamma(1/p,z) \ \mathrm{d}z \bigg].
	\end{align*}
	Some values are 
	\begin{align*}
	\sigma\ho{2}_{\djb,1/2}  &= \frac{15}{16}, &
	 \sigma\ho{2}_{\djb,1}  &= \frac12, &
	\sigma\ho{2}_{\djb,2}  
	&\approx 0.3662, \\
	 \sigma\ho{2}_{\slb,1/2}  &= \frac{7}{16}, &
	 \sigma\ho{2}_{\slb,1}  
	 &\approx 0.2726, &
	\sigma\ho{2}_{\slb,2}  &\approx 0.212909.
	\end{align*}
	It can further be shown that $\lim_{p\to \infty}\sigma\ho{2}_{{\rm m}, p} = \sigma^2_{{\rm m}, \rm C}$ for ${\rm m}\in\{\djb, \slb\}$.
\end{ex}

	\begin{rem} \label{rem:bias_reduction}
			Instead of working with $\hat{F}_n$ in the definition of $\hat{Z}_{ni} = b_n \{ 1- \hat{F}_n(M_{ni})\}$, one may alternatively use the empirical c.d.f.\ of $(X_s)_{s \notin I_i}$ multiplied by $(n-b_n)/(n-b_n+1)$ for $I_i = \{(i-1)b_n+1,\ldots,ib_n\}$, denoted by $\hat{F}_{n,-i}$, and define $\tilde{Z}_{ni} = b_n\{ 1-\hat{F}_{n,-i}(M_{ni})\}$ and $\tilde \theta= \hat \theta(\tilde{Z}_{n1}, \dots, \tilde{Z}_{nk_n})$. This modification has been motivated as a bias reduction scheme in \cite{Nor15}. 
			Since  
			\[ 
			\tilde Z_{ni} = b_n\{1- \hat{F}_{n,-i}(M_{ni})  \} = b_n \{ 1-\hat{F}_{n}(M_{ni})\} \frac{n+1}{n-b_n+1} =  \hat Z_{ni} \frac{n+1}{n-b_n+1} ,
			\]  
			some simple calculations show that, for instance for the CFG-estimator, 	
			\[  
			e^{-\gamma} \exp\Big\{ -\frac{1}{k_n} \sum_{i=1}^{k_n} \log(\tilde{Z}_{ni}) \Big\} 
			= 
			\frac{n-b_n+1}{n+1} \thetahat{\djb}{\CFG}{z_n},
			 \]
			 showing that the modification is asymptotically negligible. It is however beneficial in finite-sample situations, whence it has been applied throughout the finite-sample situations considered in Section~\ref{sec:sims}.
			Obviously, similar adaptions can be applied to the sliding blocks version and the other moment based estimators.
		\end{rem}

	\section{Example: max-autoregressive process}
	\label{sec:armax}

 In this section, we exemplarily discuss the new estimators when applied to a max-autoregressive process, defined by the recursion
		\[ 
		X_s = \max \left\{ \alpha X_{s-1}, (1-\alpha)Z_s \right\}, \quad s \in \Z, 
		\] 
		where $\alpha \in [0,1)$ and where $(Z_s)_{s \in \Z}$ is an i.i.d.\ sequence
		of Fr\'{e}chet(1)-distributed random variables.   A stationary solution of the above recursion is
		\[ 
		X_s = \max_{j \geq 0} \ (1-\alpha) \alpha\ho{j} Z_{s-j}, 
		\] such that the stationary solution 
		is again Fr\'{e}chet(1)-distributed.  Note that a model with an arbitrary stationary c.d.f. $F$ may be obtained by considering $\tilde X_s=F^{\leftarrow}\{ \exp(-1/X_s) \}$ and that all subsequent results are also valid for $(\tilde X_s)_s$.
		
		We start by explicitly calculating the asymptotic variances of the estimators in Section~\ref{subsec:armax_var}, and then show in Section~\ref{subsec:armax_cond} that all regularity conditions from Section~\ref{sec:cond} are met.
	
	\subsection{Asymptotic variances for the ARMAX-model}
	\label{subsec:armax_var}
	
		Recall that the ARMAX-model has extremal index $\theta =1-\alpha$ and 
		that the corresponding cluster size distribution is geometric, that is, $\pi(j) = \alpha^{j-1}(1-\alpha), j \ge 1$, see, e.g., Chapter 10 in \cite{BeiGoeSegTeu04}.  From Example~6.1 in  \cite{BerBuc18},
		one further has
		\[ 
		\Exp[\xi_1^{(z)} \xi_2^{(z)}] = 
		\frac{\alpha\ho{w+1}+z + z w (1-\alpha)}{(1-\alpha)\ho{2}}, 
		\quad
		\Exp[\xi_1^{(z)}\I(\xi_2^{(z)}=0)] = \frac{1-\alpha\ho{w+1}}{1-\alpha} - z (w+1),\]
		where $w=\lfloor  \log(z)/\log(\alpha) \rfloor$ and $(\xi_1\ho{(z)},\xi_2\ho{(z)}) \sim \pi_2\ho{(z)}$. 		
		This allows to calculate the limiting variances in Theorem \ref{theo:cfg}--\ref{theo:root}   explicitly.
	For the CFG-type estimator, some tedious but straightforward calculations imply
		\begin{align}
		& \frac{\sigma_{\djb, \rm C}^2}{\theta^2} = \frac{\pi\ho{2}}{6}+2\log(2)(\alpha-1) 
		\ \ \textrm{ and } \ \ 
		\frac{\sigma_{\slb, \rm C}^2}{\theta^2} = 2\log(2) (3+\alpha)-4, \nonumber
		\end{align}
		see also Figure~\ref{fig:ARMAXroots} for a picture of the graph of these functions.
		Next, we compare these variances with the disjoint and sliding blocks variances of the PML-estimator in \cite{BerBuc18}, which are given by $\sigma^2_{\djb,1}$ and $\sigma^2_{\slb,1}$ and satisfy 
		\begin{align*}
		 \frac{\sigma^2_{\djb,1}}{\theta^2} = \frac{1}{2}  (1+\alpha) \ \ \textrm{ and } \ \ 
		\frac{\sigma^2_{\slb,1}}{\theta^2} = \frac{8 \log(2)-5+\alpha}{2},
		\end{align*}
		respectively. 
		Thus, 
		$\sigma_{\djb, \rm C}^2 \le \sigma_{\djb, 1}^2$ iff $\alpha \leq \{ 1+4\log(2) - \pi^2/3\}/\{4\log(2)-1\} \approx 0.2723$ and $\sigma_{\slb, \rm C}^2 \le \sigma_{\slb, 1}^2$ iff $\alpha \leq \{3-4\log(2)\}/\{4\log(2)-1\} \approx 0.128$.
		Further comparisons can be drawn from Figure \ref{fig:ARMAXroots}, where the asymptotic variances of $\sqrt{k_n}(\hat{\theta}_n/\theta-1)$ are additionally illustrated for the Madogram- and the Root-estimators. 
  
             \begin{figure} 
                \begin{center}
                \includegraphics[width=0.8\textwidth]{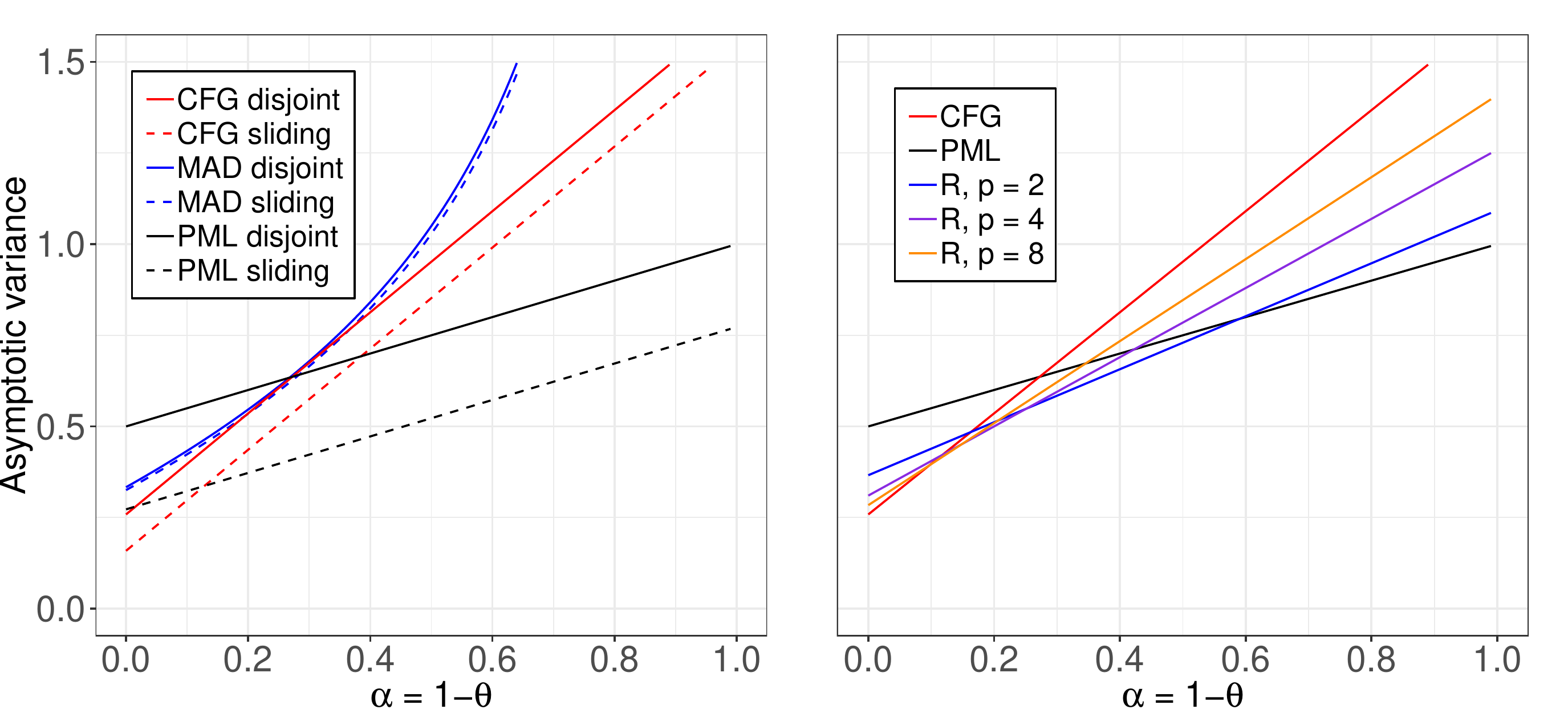}
                \caption{Asymptotic variance of $\sqrt{k_n}(\hat{\theta}_n/\theta-1)$ in the ARMAX-model. The estimators in the right figure rely on disjoint blocks.} \label{fig:ARMAXroots}
                \end{center}
        \end{figure}

	
	\subsection{Regularity Conditions for the ARMAX-model}
	\label{subsec:armax_cond}
	
	   Recall that $X_s$ is Fr\'{e}chet(1)-distributed, i.e., the stationary c.d.f. $F$ is given by
	$F(x) = \exp(-1/x), x > 0,$  with inverse $F\ho{-1}(x) = -\log(x)\ho{-1}$.
	
	The assumptions in Condition \ref{cond:BerBuc} are satisfied as shown in \cite{BerBuc18}, page\ 2322, provided $b_n$ and $k_n$ are chosen to satisfy the conditions in item (iii). Next, by induction,
		\[ 
		\Pro\big( \max_{s=1,\ldots,b} X_s \leq x \big)
		= F(x)\ho{1+\theta(b-1)},\]
		which implies that the c.d.f. of $Z_{1:b} = b \{ 1- F(M_{1:b})\}$ is given by
		\begin{equation} \label{distr_Z_1n}
		\Pro (Z_{1:b} \leq x) = 1- \Pro \Big( \max_{s=1,\ldots,b} X_s \leq F\ho{-1}(1-x/b) \Big)
		=
		\begin{cases}
		1, \ & x \geq b, \\
		1- \big( 1- \frac{x}{b} \big)\ho{1+\theta (b-1)}, \ & x \in [0,b],\\
		0, \ & b \leq 0.
		\end{cases}
		\end{equation}
		A  tedious but straightforward calculation then shows that the assumptions in Condition~\ref{cond2} and \ref{cond3} are met, provided $k_n/b_n\ho{2} = o(1)$, cf.\ Condition \ref{cond:BerBuc}(iii).  Condition~\ref{cond:bnkn2} is a condition on the choice of $b_n$, that is under the control of the statistician.
		Conditions~~\ref{cond_gewKonv} and \ref{cond_max} are consequences of mixing properties of $(X_s)_s$ as argued at the end of Section~\ref{sec:cond}. It remains to show that Condition \ref{cond_ewerte} is satisfied. By \eqref{distr_Z_1n} and with $\xi \sim \Ed(\theta)$, we have
		\begin{align}
	 \Pro(Z_{n1} < c k_n\ho{-\mu}) - \Pro(\xi< c k_n\ho{-\mu}) 
		&= \exp(-\theta c k_n\ho{-\mu})- \Big( 1- \frac{c k_n\ho{-\mu}}{b_n} \Big)\ho{1+\theta(b_n-1)} \nonumber\\
		&= o(k_n^{-1/2} (\log n)^{-1}), \qquad n\to\infty,  \nonumber
		\end{align}
for any $\mu>1/2$, where the final estimate follows from Taylor's theorem and Condition~\ref{cond:bnkn2}.

\section{Finite-sample results} 
\label{sec:sims}

A Monte-Carlo simulation study was performed to assess the finite-sample performance of the introduced estimators and to compare them with competing estimators from the literature. The data is simulated from the following four time series models that were also investigated in \cite{BerBuc18}:
\begin{itemize}
\item 
The \textbf{ARMAX-model}:
\[
X_s = \max\{ \alpha X_{s-1}, (1-\alpha) Z_s \}, \qquad s \in \Z,
\]
where $\alpha \in [0,1)$ and where $(Z_s)_s$ is an i.i.d.\ sequence of standard Fr\'echet random variables. We consider $\alpha=0,0.25,0.5,0.75$ resulting in $\theta=1,0.75,0.5,0.25$.
\item 
The \textbf{squared ARCH-model}:
\[
X_s = (2 \times 10^{-5} + \lambda X_{s-1}) Z_s^2,  \qquad s \in \Z,
\]
where $\lambda\in(0,1)$  and where $(Z_s)_s$ denotes an i.i.d.\ sequence of standard normal random variables. We consider $\lambda=0.1, 0.5, 0.9, 0.99$ for which the simulated values $\theta =0.997, 0.727, 0.460, 0.422$ were obtained, respectively; see Table 3.1 in \cite{DehResRooVri89}.
\item 
The \textbf{ARCH-model}:
\[
X_s = (2 \times 10^{-5} + \lambda X_{s-1}^2) ^{1/2}Z_s,  \qquad s \in \Z,
\]
where $\lambda\in(0,1)$  and where $(Z_s)_s$ denotes an i.i.d.\ sequence of standard normal random variables. We consider $\lambda=0.1, 0.5, 0.7, 0.99$ for which the simulated values $\theta =0.999, 0.835, 0.721, 0.571$ were obtained, respectively; see Table 3.2 in \cite{DehResRooVri89}.

\item 
The \textbf{Markovian Copula-model} (\cite{DarNguOls92}):
\[
X_s =F^\leftarrow(U_s), \quad (U_s, U_{s-1}) \sim C_\vartheta, \qquad s \in \Z.
\]
Here, $F^\leftarrow$ is the left-continuous quantile function of some arbitrary continuous c.d.f. $F$, 
$(U_s)_s$ is a stationary Markovian time series of order~1 and $C_\vartheta$ denotes the Survival Clayton Copula with parameter $\vartheta>0$. 
We consider choices $\vartheta=0.23, 0.41, 0.68, 1.06, 1.90$  such that (approximately) 
$\theta = 0.95, 0.8, 0.6, 0.4, 0.2$
 \citep{BerBuc18} and fix $F$ as the standard uniform c.d.f. (the results are independent of this choice, as the estimators are rank-based). Algorithm 2 in \cite{RemPapSou12} allows to simulate from this model. 
\end{itemize}

In each case, the sample size is fixed to $n = 2^{13}=8192$ and the block size is chosen from $b=b_n\in\{2^2,\ldots,2^{9}\}$. The performance is assessed based on $N=3000$ simulation runs each. 

\subsection{Comparison of the introduced estimators} \label{CompNewEst}
We start by comparing the finite-sample properties of the proposed sliding blocks estimators $ \thetahat{\rm{m} }{\CFG}{x}$, $\thetahat{\rm{m} }{\MAD}{x}$ and $\thetahat{\rm{m} }{\ROOT,p}{x}$ for $p \in \{ 0.5, 0.75, 1,2,4,8,16\}$ for $x \in \{z_n,y_n\}$ and for $\rm{m} \in \{\slb, \djb\}$.

As the simulation results are, to a large extent, similar among the different models and estimators, they are only partially reported, with a particular view on highlighting selected interesting qualitative features. 
We begin by a detailed investigation of the variance, the squared bias  and the mean squared error (MSE) as a function of the block size parameter $b$. 
In Figure~\ref{DisSl_fig1}, we present results for the disjoint and sliding blocks version of the CFG- and the PML-estimator in a representative ARMAX-model with $\theta=0.75$.  
Similarly as in \cite{BerBuc18} and as to be expected from the asymptotic results, the bias of the disjoint and the sliding blocks version are almost identical, while the variance is uniformly smaller for the sliding blocks version (in particular for large values of $b_n$). Since this qualitative behavior holds uniformly over all models and estimators, we omit the disjoint blocks estimator from the subsequent discussions and write $\Thetahat{\CFG}{x } = \thetahat{\slb}{\CFG}{x}$ etc.\ for simplicity.

		\begin{figure}
			\begin{center} \vspace{-.3cm}
				\includegraphics[width=0.8\textwidth]{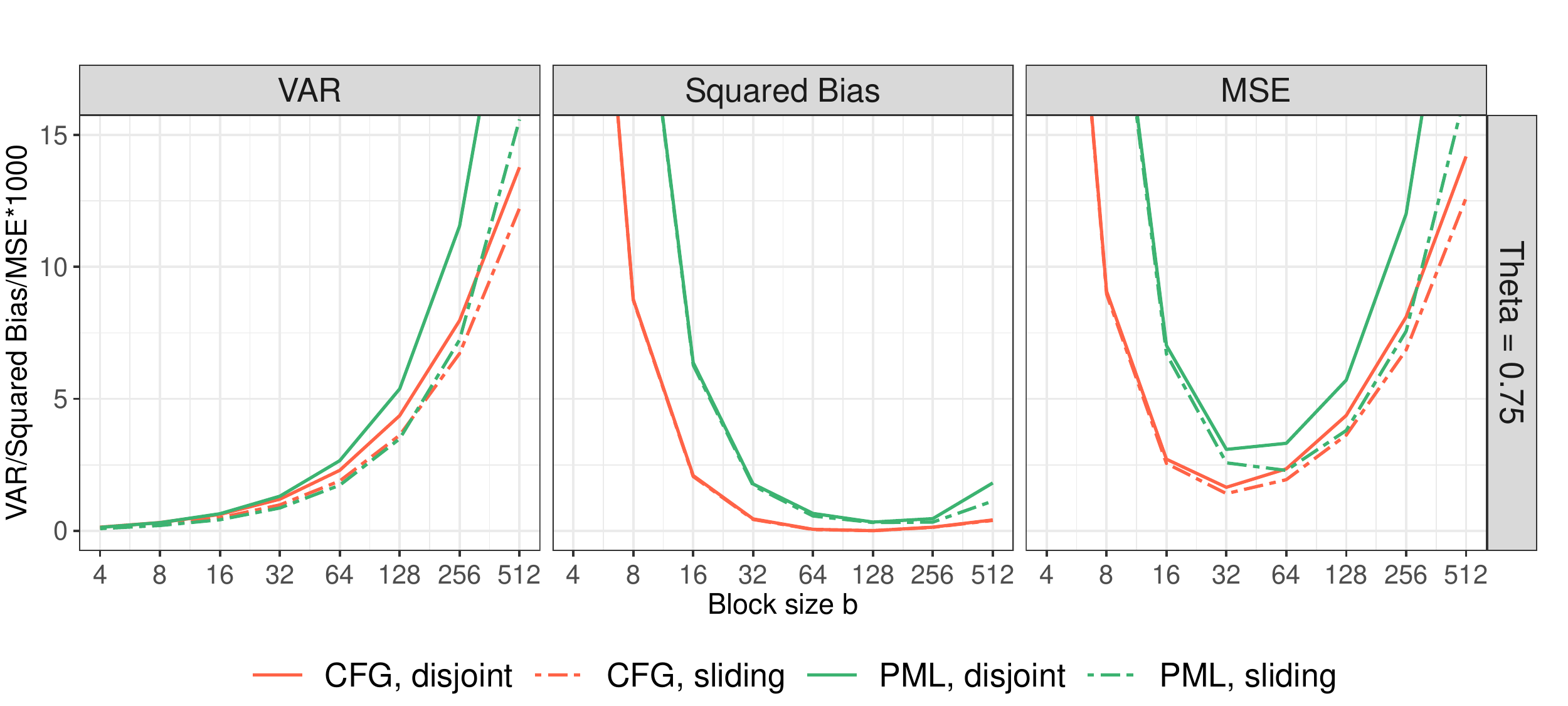}
				\caption{Comparison of variance, squared bias and MSE, multiplied by $10^3$, of the disjoint and sliding blocks CFG- and PML-estimator in the ARMAX-model. }   \vspace{-.4cm}
				\label{DisSl_fig1}
			\end{center}
		\end{figure}

Next, we compare the different moment estimators. For illustrative purposes, we begin by restricting the presentation to the $z_{n}$-versions  and the ARCH-model. The corresponding results are depicted in Figure \ref{Fig:ARCH_all} (for the CFG-, the Madogram- and three selected Root-estimators). In general, as to be expected from the underlying theory, the variance curves are increasing in $b$, while the squared bias curves are (mostly) decreasing in $b$, resulting in a typical U-shape for the MSE curves.
 The hierarchy of the estimators with regard to the considered performance measures is similar among the considered values of $\theta$. In terms of the MSE, up to an intermediate block size, the CFG- and Madogram-estimator are superior to the other estimators (especially to the PML-estimator), while for large block sizes the Madogram-estimator has a relatively high MSE, but the CFG-estimator partly remains superior. The Root-estimators are, as expected, ordered in $p$ and located between the PML- and CFG-estimator.

\begin{figure} [ht!]
 \begin{center}
 \vspace{-.3cm}
  \includegraphics[width=0.95\textwidth]{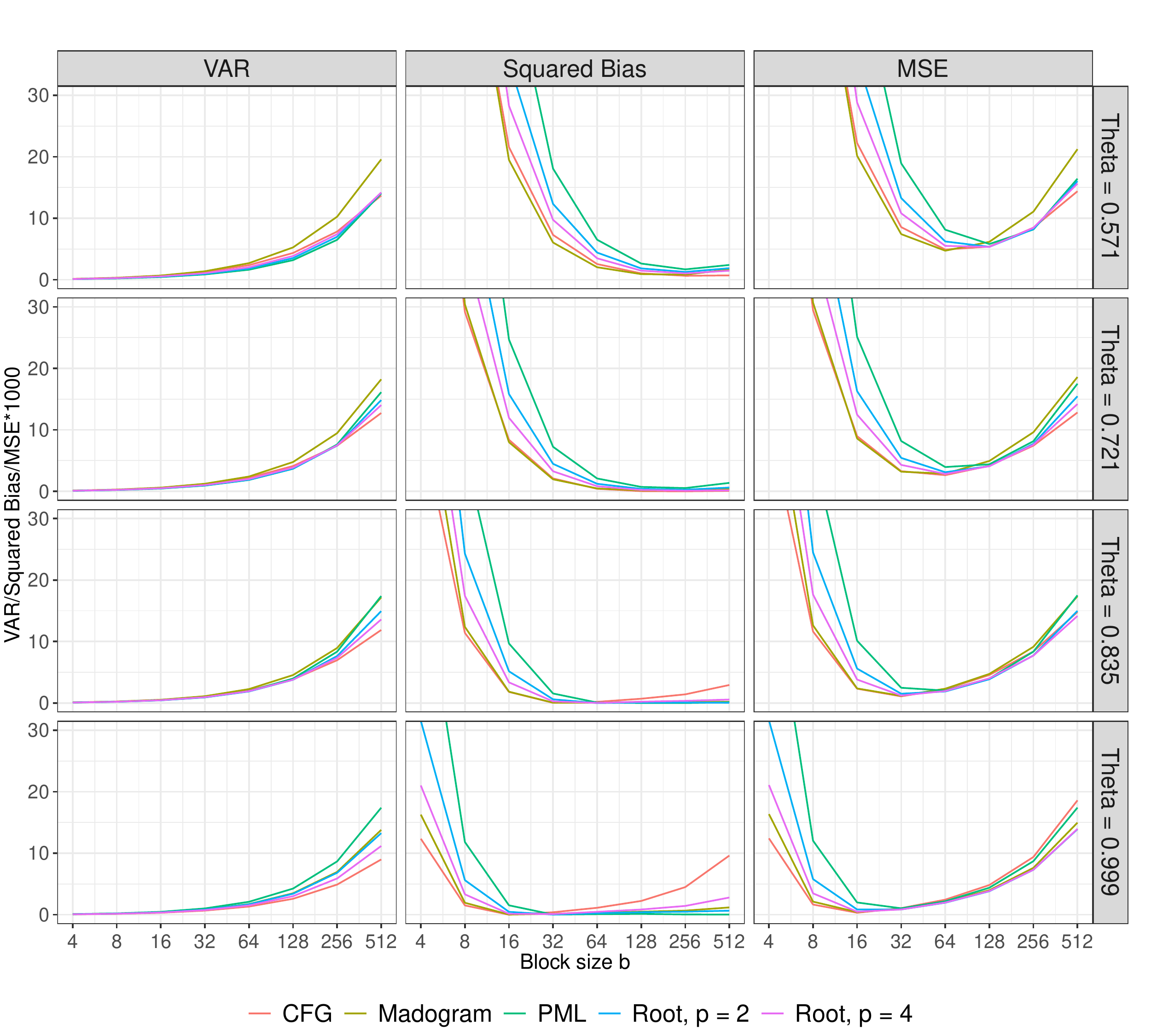}
  \caption{Variance, squared bias and MSE, multiplied by $10^3$, for the estimation of $\theta$ within the ARCH-model for four values of $\theta$.} \vspace{-.3cm}
  \label{Fig:ARCH_all}
 \end{center}
\end{figure}
 
 Next, a comparison between the $z_n$- and $y_n$-versions of the estimators is drawn in Figure~\ref{Fig:ZniVsYni}; for illustrative purposes, attention is restricted to six different models and two estimators. Remarkably, there are many models, especially for smaller values of $\theta$, in which the MSE-curves of the $y_n$-versions lie uniformly below the ones of the $z_n$-versions. In the remaining models, neither version can be said to be strictly preferable. Furthermore, it is remarkable that, for $\theta$ close to one, the MSE-curves of  the $y_n$-versions are often no longer U-shaped, but increasing in the block size instead. 
 The latter behavior may be explained by the proximity to the i.i.d.\ case, since in that case, we have 
\[
	\Pro (Y_{1:b} \geq y) = \Pro( N_{1:b} \leq e^{-y/b}) = \Pro(U_1 \leq e^{-y/b})^b = e^{-y}
\] 
for all $b\in\N$, such that there is real equality in relation (\ref{eq:yb}), resulting in a vanishing bias.

\begin{figure}[t]
 \begin{center}
  \vspace{-.3cm}
  \includegraphics[width=0.9\textwidth]{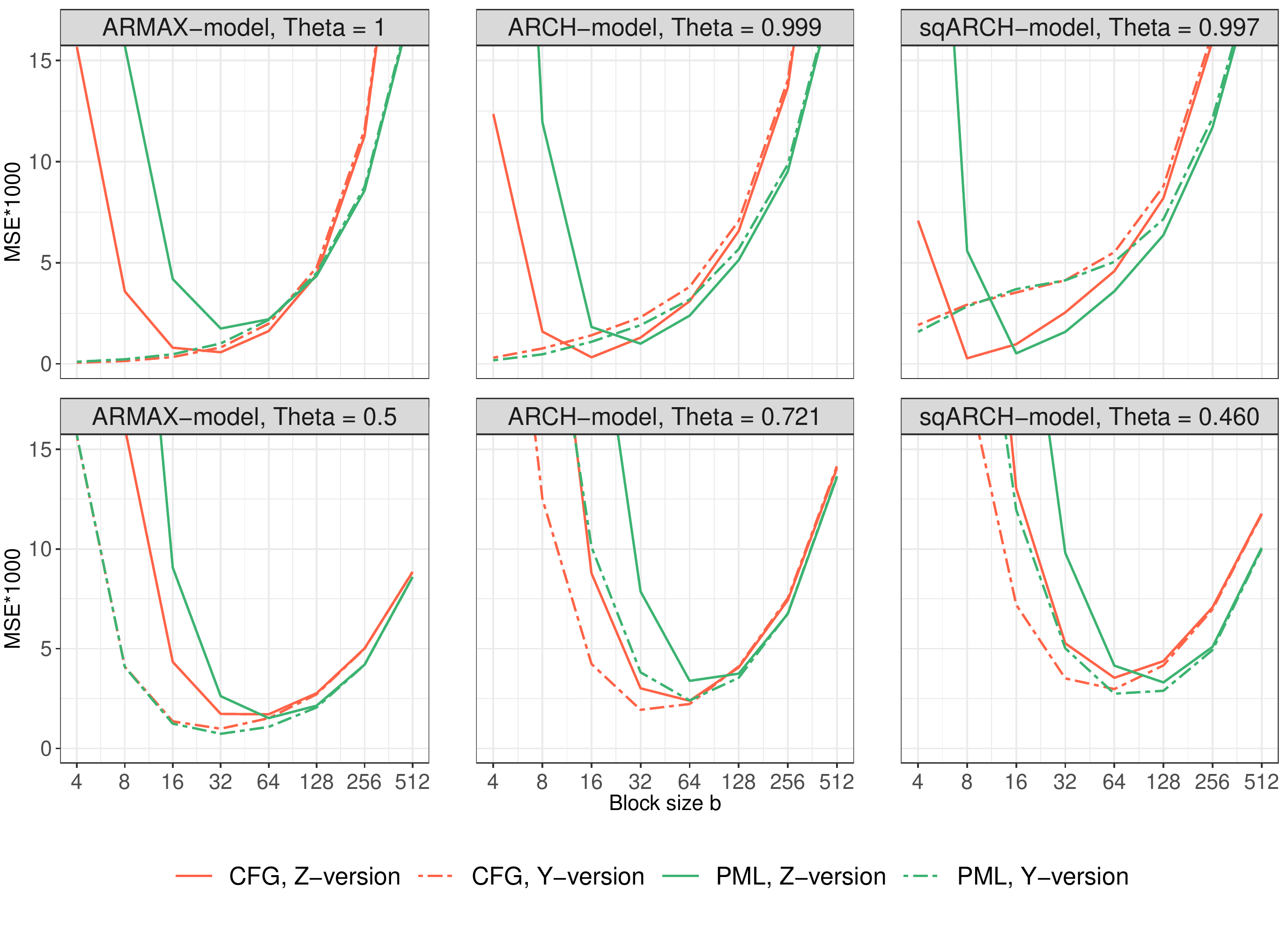}  \vspace{-.4cm}
  \caption{Comparison of the MSE multiplied by $10^3$ of the $z_n$- and $y_n$-versions of the estimators.}   \vspace{-.4cm}
  \label{Fig:ZniVsYni}
 \end{center}
\end{figure}

Next, we investigate the dependence of the performance of the Root-estimators on the parameter $p$; recall that $p=1$ yields the PML-estimator, while `$p=\infty$' yields the CFG-estimator. In Figure \ref{Fig:Wurzeln}, the MSE-curves are depicted as a function of $p$ for various fixed block sizes and for three selected models. It can be seen that choices of $p<1$ lead to a poor behavior of the corresponding estimators. At the same time, the results do not allow to identify some `optimal' choice of $p \geq 1$ which is valid uniformly over all models. A similar conclusion can be drawn from Table~\ref{Tab:Roots}, which presents, for the ARCH- and ARMAX-model and every block size $b$, the value of $p$ for which the Root-estimator attains the minimal MSE ($p=\infty$ corresponds to the CFG-estimator). One can see that most values of $p$ are represented, with $p=\infty$ appearing most often, but that there is no optimal choice of $p$ universally over all models.

 \begin{figure}[t]
 	\begin{center}
 		\includegraphics[width=0.8\textwidth]{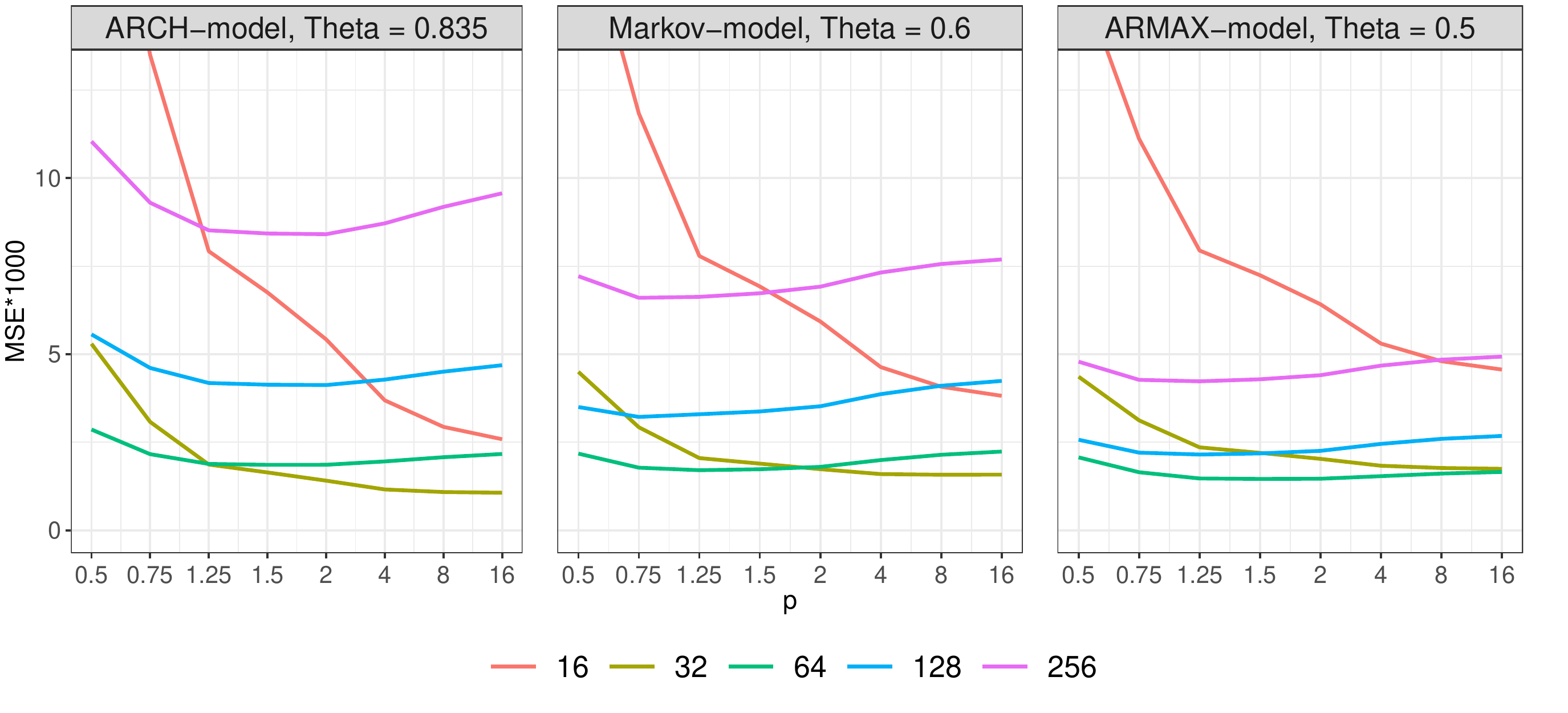}
 		\caption{Mean Squared Error multiplied by $10^3$ of the Root-estimators as a function of the parameter $p$ for block sizes $b \in \{ 16,32,64,128,256 \}$ and three different models.}
 		\vspace{-.3cm}
 		\label{Fig:Wurzeln}
 	\end{center}
 \end{figure}

\begin{table}[t!]
\centering
{\footnotesize
\begin{tabular}{ r | C{.8cm} C{.8cm}  C{.8cm} C{.8cm} |  C{.8cm} C{.8cm} C{.8cm} c }
\hline \hline
 Model & \multicolumn{4}{c|}{ARCH} & \multicolumn{4}{c}{ARMAX} \\
 \hline
 Theta & 0.999 & 0.835 & 0.721 & 0.571 & 1 & 0.75 & 0.5 & 0.25   \\
  \hline
b = 4   & $\infty$&  $\infty$ & $\infty$ & $\infty$   & $\infty$ & $\infty$ & $\infty$   & $\infty$  \\ 
 8   &    $\infty$&  $\infty$ & $\infty$ & $\infty$   & $\infty$ & $\infty$ & $\infty$   & $\infty$  \\ 
 16  &    $\infty$&  $\infty$ & $\infty$ & $\infty$   & $\infty$ & $\infty$ & $\infty$   & $\infty$  \\ 
 32  &    2 &  $\infty$ & $\infty$ & $\infty$    & $\infty$ & $\infty$ & $\infty$   & $\infty$  \\ 
 64  &    2 &  2  & $\infty$ & $\infty$  & 16 & 8  & 1.5 & 2   \\ 
 128 &    2 &  1.5 & 4  & 4    & 8  & 4  & 1 & 1   \\ 
 256 &    2 &  4  & $\infty$ & 1.25 & 4  & 8  & 1 & 0.75 \\ 
 512 &    2 &  8  & $\infty$ & $\infty$ & 4  & $\infty$ & 1 & 0.75   \\ \hline
 $\min_b$  & $\infty$ & $\infty$ & $\infty$ & $\infty$ & $\infty$ & $\infty$ & 1.5 & 1 \\
 \hline\hline
\end{tabular}
\vspace{8pt}
}
\caption{Identification of the Root-estimator $p$ with the minimum MSE for the ARCH- and ARMAX-model and every considered block size $b$. The $p$ with the minimum MSE over all blocksizes is presented in the last line.}  \vspace{-.3cm}
\label{Tab:Roots}
\end{table}

\subsection{Comparison with other estimators for the extremal index}
In this section, we compare the performance of the introduced new estimators with the following estimators: the bias-reduced sliding blocks estimator from \cite{RobSegFer09} (with a data-driven choice of the threshold as outlined in Section 7.1 of that paper), the integrated version of the blocks estimator from \cite{Rob09}, the intervals estimator from \cite{FerSeg03} and the ML-estimator from \cite{Suv07}. The parameters $\sigma$ and $\phi$ for the Robert-estimator (cf. page 276 of \citealp{Rob09}) are chosen as $\sigma = 0.7$ and $\phi= 1.3$. In the case of the intervals- and Süveges-estimator, the choice of a threshold $u$ is required, which is here chosen as the $1-1/b_n$ empirical quantile of the observed data. With regard to our estimators, we present results for the sliding-blocks, bias-reduced and $z_n$-versions, if not indicated otherwise.

\begin{figure} [t!]
 \begin{center}
  \includegraphics[width=0.8\textwidth]{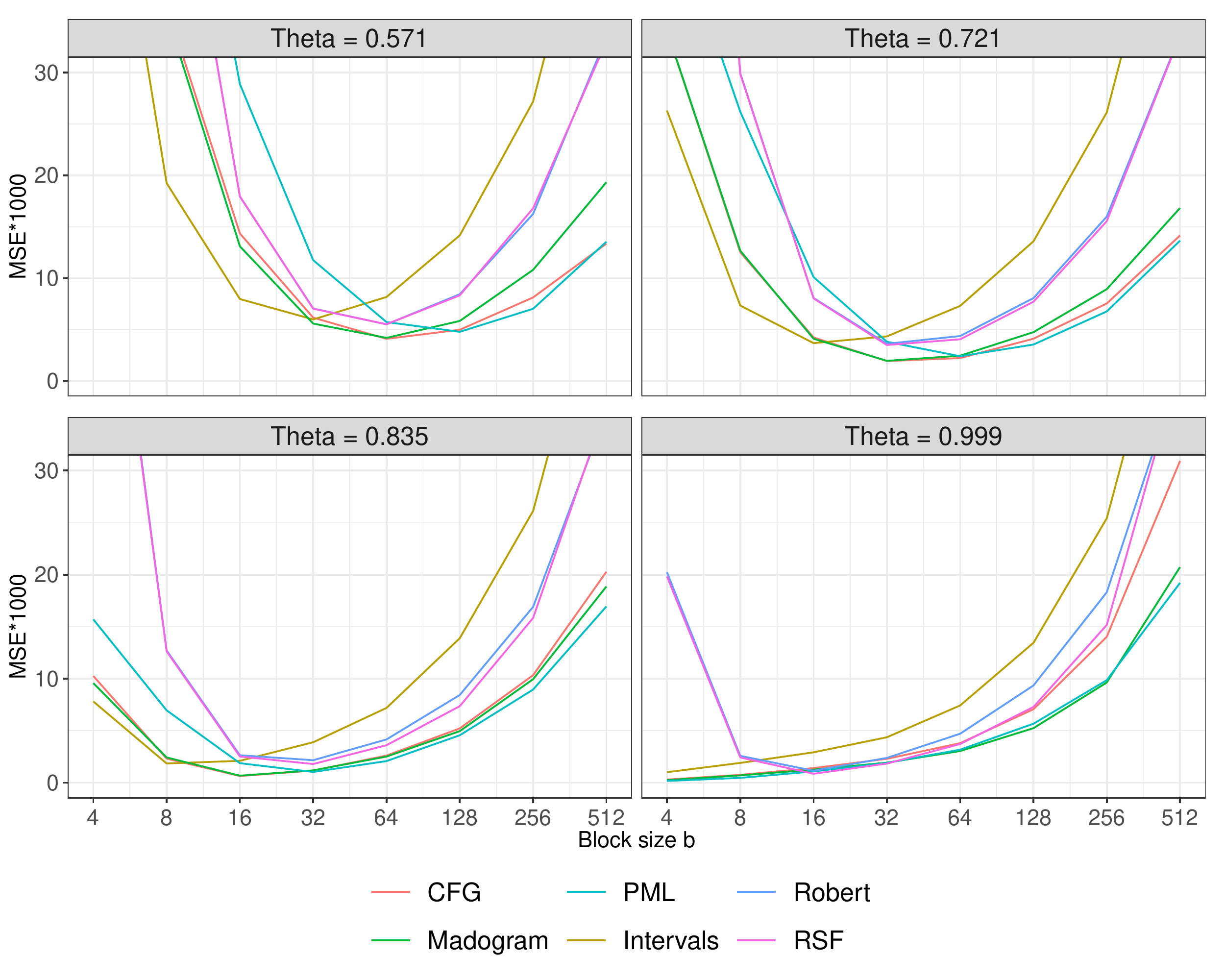} \vspace{-.3cm}
  \caption{Mean Squared Error multiplied by $10^3$ for the estimation of $\theta$ within the ARCH-model for four values of~$\theta$.}  \vspace{-.3cm}
  \label{Fig:AllEstimARCH}
 \end{center}
\end{figure}

In Figure \ref{Fig:AllEstimARCH}, we depict the MSE as a function of the block size $b$. For most models, the MSE-curves of the estimators from the literature are again U-shaped due to the bias-variance tradeoff already described in section \ref{CompNewEst}. It can further be seen that no estimator is uniformly best in any model under consideration. The method-of-moment estimators do however compare quite well to the competitors. 

The minimum values of the MSE-curves in Figure \ref{Fig:AllEstimARCH} are of particular interest. 
Due to the large amount of estimators and models under consideration (in total 26 estimators and 17 models) we try to simplify possible comparisons by the following aggregation, summarized in Table \ref{Tab:minMSE}. First, in the first four columns of the table, we calculate for each time series model and each estimator under consideration, the sum (sum over all values of $\theta$ considered for the specific model) of the minimum MSE-values (minimum over $b$). Second, in the last four columns of the table, we present the sum of the minimum MSE-values (minimum over $b$) over all models, for which the extremal index $\theta$ lies in the interval $(0,0.3], (0.3,0.6], (0.6,0.8]$ or $(0.8,1]$, respectively. It can be seen that the CFG-estimator wins thrice, the Madogram- and PML-estimator wins twice, the Süveges and the Intervals-estimator wins once, and that the remaining smallest values are covered by a version of the Root-estimator.  Also note that for large values of $\theta \in (0.8,1]$ (last column), the CFG-estimator and the Root-estimator for $p \in \{8,16\}$ are the best performing estimators. As a final interesting observation, note that the $y$-versions of the moment estimators mostly outperform the $z$-version, except for the column corresponding to $\theta\in(0.8,1]$ and some entries in the columns `Markov' and  `sqARCH'. A more refined analysis showed that these differences were almost exclusively attributable to the two specific models `Markov($\theta=0.95$)' and `sqARCH($\theta=0.997$)', which appear to be rather difficult to estimate for all estimators under consideration.

\begin{table}[t!]
	\centering
	{\footnotesize
		\begin{tabular}{l | rrrr | rrrr}
			\hline \hline
			 Estimator & ARMAX & ARCH & sqARCH & Markov & $(0,0.3]$ & $(0.3,0.6]$ & $(0.6,0.8]$ & $(0.8,1]$ \\
			\hline
			CFG, Z & 4.80 & 8.54 & 8.46 & 11.19 & 5.84 & 19.08 & 5.46 & \bf 2.61 \\
			 CFG, Y & 2.56 & \bf 6.98 & 8.41 & 12.63 & 5.08 & 15.45 & \bf 3.56 & 6.49 \\
			Madogram, Z & 5.17 & 8.87 & 7.92 & 10.77 & 5.66 & 18.12 & 5.68 & 3.27 \\
			Madogram, Y & 3.00 & \bf 7.08 & 8.62 & 12.65 & 5.10 & 15.72 & \bf 3.59 & 6.94 \\
			 PML, Z & 6.18 & 11.74 & 7.99 & 10.89 & 4.44 & 18.62 & 7.37 & 6.38 \\
			 PML, Y & \bf 1.96 & 8.40 & 7.45 & 10.99 & \bf 3.73 & 14.83 & 4.04 & 6.21 \\
			 Root, p = 0.5, Z & 9.64 & 17.37 & 11.57 & 12.11 & 4.90 & 24.18 & 11.25 & 10.35 \\
			 Root, p = 0.5, Y & 2.33 & 11.99 & 8.49 & 10.94 & 3.90 & 18.14 & 5.66 & 6.05 \\
			 Root, p = 0.75, Z & 7.08 & 13.33 & 8.83 & 10.99 & 4.44 & 19.80 & 8.79 & 7.20 \\
			 Root, p = 0.75, Y & 2.03 & 9.26 & 7.63 & 10.74 & \bf 3.66 & 15.53 & 4.41 & 6.06 \\
			 Root, p = 1.25, Z & 5.77 & 11.02 & 7.82 & 10.80 & 4.56 & 18.33 & 6.61 & 5.89 \\
			 Root, p = 1.25, Y & \bf 1.96 & 8.06 & \bf 7.37 & 11.04 & 3.74 & \bf 14.47 & 3.90 & 6.32 \\
			 Root, p = 1.5, Z & 5.54 & 10.48 & 7.86 & \bf 10.47 & 4.72 & 18.38 & 6.21 & 5.04 \\
			 Root, p = 1.5, Y & 1.98 & 7.93 & \bf 7.32 & 11.10 & 3.76 & \bf 14.32 & 3.84 & 6.40 \\
			 Root, p = 2, Z & 5.22 & 9.82 & 8.11 & \bf 10.22 & 4.84 & 18.67 & 5.76 & 4.10 \\
			 Root, p = 2, Y & 2.03 & 7.88 & \bf 7.34 & 11.16 & 3.84 & \bf 14.34 & 3.72 & 6.51 \\
			 Root, p = 4, Z & 4.84 & 9.10 & 8.40 & \bf 10.14 & 5.07 & 18.81 & 5.39 & 3.20 \\
			 Root, p = 4, Y & 2.20 & 7.52 & 7.64 & 11.58 & 4.21 & 14.53 & 3.67 & 6.52 \\
			 Root, p = 8, Z & 4.76 & 8.88 & 8.42 & 10.48 & 5.37 & 18.96 & 5.36 & \bf 2.85 \\
			 Root, p = 8, Y & 2.35 & 7.31 & 7.95 & 12.02 & 4.56 & 14.91 & 3.68 & 6.48 \\
			 Root, p = 16, Z & 4.76 & 8.69 & 8.41 & 10.78 & 5.58 & 18.99 & 5.39 & \bf 2.68 \\
			 Root, p = 16, Y & 2.45 & \bf 7.14 & 8.16 & 12.32 & 4.80 & 15.18 & \bf 3.61 & 6.47 \\
			 Intervals & 3.49 & 12.53 & 11.72 & 21.86 & \bf 3.60 & 15.55 & 11.46 & 18.98 \\
			 ML Süveges & \bf 1.90 & 22.67 & 8.70 & 25.20 & 14.93 & 30.46 & 4.95 & 8.13 \\
			 Robert & 8.54 & 12.45 & 9.97 & 13.61 & 6.46 & 22.42 & 8.34 & 7.34 \\
			 RSF & 8.09 & 11.68 & 9.77 & 15.85 & 7.28 & 23.52 & 7.52 & 7.06 \\
			\hline \hline
		\end{tabular}
		\vspace{8pt}
	}
	\caption{Sum of minimal Mean Squared Error multiplied by $10^3$ over different models and $\theta_1 \in (0,0.3], \theta_2 \in (0.3,0.6], \theta_3 \in (0.6,0.8]$ and $\theta_4 \in (0.8,1]$. The three smallest values per column are in boldface.}  \vspace{-.3cm}
	\label{Tab:minMSE}
\end{table}

\section{Conclusion}
Estimating the extremal index is a classical problem in extreme value analysis for univariate stationary time series, with many ad-hoc solutions based on diverse motivations. This paper considers a new approach that is based on certain rescaled samples of ranks of  block maxima and the method of moment principle. The underlying samples have also been used by \cite{Nor15} and \cite{BerBuc18} to define explicit (pseudo) maximum likelihood estimators for the extremal index. Using the method of moment principle instead results in a large variety of alternative estimators. Studying their properties was initially motivated by the fact that a similar approach in multivariate extremes (the rank-based CFG-estimator for the Pickands function) was found to yield a more efficient estimator than the (pseudo) maximum likelihood method \citep{GenSeg09}.

The method of moment principle being a rather universal principle, the present paper goes far beyond only  considering a CFG-type estimator. In fact, based on natural moment equations for the exponential distribution (see Section~\ref{subsec:mom}), three classes of method of moment estimators were considered, which may each be based on (1) either disjoint or sliding block maxima, and (2) on certain $y$- or $z$-transformations of the block maxima. The sliding blocks version was always found to be more efficient than the disjoint blocks version. The $y$- and $z$-version share a similar behavior in terms of their asymptotic variances, but their bias may differ substantially depending on the underlying data generating process. The initial conjecture derived from \cite{GenSeg09} was partially confirmed: for $\theta$ in an explicit neighbourhood of $1$, the asymptotic variance of the CFG-type estimator is always smaller than the one of the ML-type estimator. A comparison between the various method of moment estimators is more cumbersome, with no universal answer, neither theoretically nor in terms of simulated finite sample results. If one were to come up with a single proposal, then the simulation study overall suggests to  use the sliding blocks $y$-version of the root-estimator with an intermediate choice of $p$, say, $p=1.25$.

In comparison with many other estimators for the extremal index, the proposed estimators have the advantage of being based on only one parameter to be chosen by the statistician, namely the block size $b$. Moreover, the estimators perform equally well or even better in some typical finite sample situations. 

Finally, this work leaves some interesting questions for future research: (1) what is the minimal asymptotic variance that can be achieved by estimators based on the considered rank-based samples? (2) More generally,  are there estimators for the extremal index that are semiparametrically efficient?
(3) Can the sliding blocks method be used to derive more efficient estimators for the cluster size distribution, for instance by generalizing the disjoint blocks versions in \citealp{Rob09}?

 \section*{Acknowledgements}
This work has been supported by the Collaborative Research Center ``Statistical modeling of nonlinear dynamic processes'' (SFB 823) of the German Research Foundation, which is gratefully acknowledged. The authors are grateful to two referees and an associate editor for their constructive comments on an earlier version of this article which lead to a substantial improvements.

\appendix

\section{Proofs of Theorems~\ref{theo:cfg}--\ref{theo:root}}
\label{sec:pmaster}

The proofs of Theorems~\ref{theo:cfg}--\ref{theo:root} are actually quite similar in that each proof will be decomposed into a sequence of similar intermediate lemmas. 
Occasionally, those lemmas will be hardest to prove for  Theorem~\ref{theo:cfg} and easiest to prove for Theorem~\ref{theo:mad}; this is also reflected by the larger number of conditions required for the proof of Theorem~\ref{theo:cfg}. The proof of Theorem~\ref{theo:root} in turn is quite similar to the one in \cite{BerBuc18}, and of intermediate difficulty. 
For the above reasons, we will carry out the proof of Theorem~\ref{theo:cfg} in great detail (Section~\ref{subsec:pcfg}), and skip parts of the technical arguments needed for Theorem~\ref{theo:mad} and \ref{theo:root} where possible (Sections~\ref{subsec:pmad} and \ref{subsec:proot}). Intermediate, but less central results for the proof of Theorem~\ref{theo:cfg} are given in Sections~\ref{subsec:pdb}, \ref{subsec:psb} and \ref{subsec:pa}.

All convergences are for $n\to\infty$ if not stated otherwise.

\subsection{Proof of Theorem~\ref{theo:cfg}}
\label{subsec:pcfg}

The following notations will be used throughout: 
\begin{align*}
\hat{S}_n &= \frac{1}{k_n} \sum_{i=1}^{k_n} \log(\hat {Z}_{ni}), & 
S_n &= \frac{1}{k_n} \sum_{i=1}^{k_n} \log(Z_{ni}), \\
\hat{S}_n^\slb &= \frac{1}{n-b_n+1} \sum_{i=1}^{n-b_n+1} \log(\hat {Z}_{ni}^\slb), & 
S_n^\slb &= \frac{1}{n-b_n+1} \sum_{i=1}^{n-b_n+1} \log(Z_{ni}^\slb). 
\end{align*}
Note that $\thetahat{\djb}{\CFG}{z_n} = \varphi_{\rm (C)}^{-1}(\hat S_n)$ and $\thetahat{\slb}{\CFG}{z_n} = \varphi_{\rm (C)}^{-1}(\hat S_n^\slb)$, where $\varphi_{\rm (C)}^{-1}(x)=\exp\{-(x+\gamma)\}$. Observing that $(\varphi_{\rm (C)}^{-1})'\{\varphi_{\rm (C)}(\theta)\}=\theta$, the two assertions of the theorem are a consequence of the delta-method and Proposition~\ref{prop:cfg_snhat} and Proposition~\ref{prop:cfg_snhat2}, respectively. \qed

\begin{prop} \label{prop:cfg_snhat}
Under Condition \ref{cond:BerBuc}, \ref{cond2}(i), \ref{cond3}(i) and \ref{cond_CFG}, we have
\[ 
	\sqrt{k_n} \{ \hat{S}_n - \varphi_{\rm (C)}(\theta)\} \wto \Nor(0,\sigma^2_{\djb, \rm C} / \theta^2)  \quad \textrm{ as } n \to \infty. 
\]
\end{prop}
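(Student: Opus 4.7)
The plan is to split the target quantity as
\[
\sqrt{k_n}\{\hat S_n - \varphi_{(\mathrm C)}(\theta)\} = T_n^{\mathrm{orc}} + T_n^{\mathrm{rnk}},
\quad
T_n^{\mathrm{orc}} := \sqrt{k_n}\{S_n - \varphi_{(\mathrm C)}(\theta)\},
\quad
T_n^{\mathrm{rnk}} := \sqrt{k_n}\{\hat S_n - S_n\},
\]
and then to establish \emph{joint} asymptotic normality of the two summands, verifying that the variance of the limiting sum matches $\sigma^2_{\djb,\mathrm C}/\theta^2$. The overall architecture mirrors the PML analysis in \cite{BerBuc18}; the essentially new feature is that $\log$ is unbounded near $0$, which forces a delicate treatment of the lower tail of $Z_{ni}$ and is the reason for the extra technical Condition~\ref{cond_CFG}.

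For the oracle term $T_n^{\mathrm{orc}}$, I would invoke a big-blocks/small-blocks CLT applied to the stationary sequence $\{\log Z_{ni}\}_{i=1,\ldots,k_n}$. Since $\log Z_{ni}$ is block-measurable, the mixing Condition~\ref{cond:BerBuc}(iii) enables approximate independence, while the moment bound in Condition~\ref{cond2}(i) provides uniform integrability and the variance continuity $\Var(\log Z_{n1}) \to \Var(\log \xi) = \pi^2/6$. The bias assumption~\ref{cond3}(i) legitimizes the centering at $\varphi_{(\mathrm C)}(\theta)$ rather than at $\Exp[\log Z_{n1}]$. A Cramér--Wold reduction combined with the two-level point-process convergence in Condition~\ref{cond:BerBuc}(i) then identifies the inter-block covariance contribution involving $(\xi_1^{(z)},\xi_2^{(z)})$.

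The rank correction $T_n^{\mathrm{rnk}}$ is handled via the algebraic identity
\[
\frac{\hat Z_{ni}}{Z_{ni}} = \frac{n}{n+1} + \frac{b_n\sqrt{k_n}}{n+1}\cdot\frac{e_n(Z_{ni})}{Z_{ni}},
\]
obtained by expressing $\hat Z_{ni} = b_n(n+1)^{-1}\sum_s\I(U_s>1-Z_{ni}/b_n)$ in terms of $e_n$ from \eqref{eq:tep}. Since $b_n\sqrt{k_n}/(n+1)\sim k_n^{-1/2}$, a Taylor expansion of $\log(1+x)$ should yield
\[
T_n^{\mathrm{rnk}} = \frac{1}{k_n}\sum_{i=1}^{k_n}\frac{e_n(Z_{ni})}{Z_{ni}} + o_P(1).
\]
Proving this approximation is the main difficulty. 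I would partition the sum according to $\{Z_{ni}<c k_n^{-\mu}\}$, $\{ck_n^{-\mu}\le Z_{ni}<c\}$ and $\{Z_{ni}\ge c\}$: on the first set the tail estimate in Condition~\ref{cond_ewerte} (together with Condition~\ref{cond:bnkn2}) renders the contribution negligible even after multiplication by $\log n$; on the third, Condition~\ref{cond_max} uniformly dominates $|e_n(Z_{ni})/(Z_{ni}\sqrt{k_n})|$ by $o_P(1)$, which both legitimizes the Taylor expansion and annihilates the quadratic remainder; the intermediate strip is handled through the weighted convergence $e_n(x)/x^\tau\Rightarrow e(x)/x^\tau$ of Condition~\ref{cond_gewKonv}.

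The closing step is to deduce joint convergence of $T_n^{\mathrm{orc}}$ and $k_n^{-1}\sum_i e_n(Z_{ni})/Z_{ni}$, and to compute the variance of the sum. Using Condition~\ref{cond_gewKonv} together with the empirical convergence of $k_n^{-1}\sum_i\delta_{Z_{ni}}$ to the $\Ed(\theta)$-law, the linear functional converges to an integral of the Gaussian limit $e$ against $\theta e^{-\theta z}\,dz$, and joint Gaussianity follows from a Wichura-type finite-dimensional argument built on the two-level point-process asymptotics in Condition~\ref{cond:BerBuc}(i). A tedious but mechanical calculation then verifies that the variance of the limit equals $\sigma^2_{\djb,\mathrm C}/\theta^2$. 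The principal obstacle throughout is, as emphasized, the uniform control of the Taylor remainder in $T_n^{\mathrm{rnk}}$ in the presence of the $\log$ singularity — which is exactly what Condition~\ref{cond_CFG} is tailored to handle.
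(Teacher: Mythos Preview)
Your overall architecture coincides with the paper's: it decomposes $\sqrt{k_n}\{\hat S_n-\varphi_{(\mathrm C)}(\theta)\}=A_n+B_n+C_n$ with $A_n=T_n^{\mathrm{rnk}}$ and $B_n+C_n=T_n^{\mathrm{orc}}$, rewrites $A_n=\int_0^\infty W_n\,d\hat H_{k_n}+o(1)$ where $W_n(x)=\sqrt{k_n}\log\{1+\tilde e_n(x)/(x\sqrt{k_n})\}$, and then combines the pieces via Wichura's theorem with truncation to $[1/m,m]$. Your three regions correspond to the paper's lower-tail pieces $T_{n2},T_{n3}$ and the main strip; the joint Gaussianity step and the variance identification are also as in the paper.

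There is, however, a concrete gap in your first region $\{Z_{ni}<ck_n^{-\mu}\}$. Your ``multiplication by $\log n$'' step presupposes $|\log(\hat Z_{ni}/Z_{ni})|=O(\log n)$ uniformly there, but this fails: while $\hat Z_{ni}\ge b_n/(n+1)$ is bounded below, $Z_{ni}$ is not, so $|\log Z_{ni}|$ admits no $O(\log n)$ bound on $(0,ck_n^{-\mu}]$. The paper fixes this with one further split at $dk_n^{-1}$: since $\min_i Z_{ni}=Z_{1:n}/k_n$ and $Z_{1:n}\wto\xi\sim\Ed(\theta)$, one has $\Prob(\min_i Z_{ni}\le dk_n^{-1})\to 1-e^{-d\theta}$, which is made arbitrarily small by taking $d$ small; on the remaining strip $(dk_n^{-1},ck_n^{-\mu}]$ the $\log n$ bound does hold, and there Condition~\ref{cond_ewerte} finishes exactly as you describe. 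A second, smaller omission: after linearization the upper tail $\int_m^\infty (e_n(x)/x)\,d\hat H_{k_n}(x)$ must still be shown negligible as $m\to\infty$; Condition~\ref{cond_max} controls only the Taylor remainder, not this linear piece, and the paper disposes of it separately via the variance bound in Condition~\ref{cond:BerBuc}(iv) (cf.\ Lemma~9.1 in \cite{BerBuc18}).
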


\begin{proof}
We  may decompose
	\[
	\sqrt{k_n} \{ \hat{S}_n-\varphi_{\rm (C)}(\theta) \} = A_n+B_n+C_n ,
	\]
	where 
	\[
	A_n = \sqrt{k_n} \{\hat{S}_n-S_n\}, \quad 
	B_n = \sqrt{k_n}\{ S_n-\Exp(S_n) \} ,\quad
	C_n = \sqrt{k_n}  \{ \Exp(S_n)-\varphi_{\rm (C)}(\theta) \}. 
	\]  
	We have $C_n=o(1)$ by Condition \ref{cond3}(i). For the treatment of $A_n$, recall the tail empirical process defined in \eqref{eq:tep}.
	Further, let $\tilde N_{ni} = (n+1)/n  \times \hat N_{ni}$, and note that
	\begin{align}
		1-\tilde {N}_{ni} = 
		\frac{1}{n} \sum_{s=1}^{n} \I(X_s > M_{ni})  \nonumber 
		&=   \frac{1}{n} \sum_{s=1}^{n} \I \Big(U_s > 1-\frac{Z_{ni}}{b_n}\Big)  \nonumber\\
		&=  \frac{\sqrt{k_n}}{n} \frac{1}{\sqrt{k_n}} \sum_{s=1}^{n}  \Big\{ \I \Big(U_s > 1-\frac{Z_{ni}}{b_n}\Big) - 
		\frac{Z_{ni}}{b_n} \Big\} + \frac{Z_{ni}}{b_n}  \nonumber\\
		&= \frac{\sqrt{k_n}}{n} e_n(Z_{ni}) + \frac{Z_{ni}}{b_n}.  \label{eqnhut}
	\end{align} 
Finally, let 
\begin{align} \label{eq:hkn}
\hat{H}_{k_n}(x):= \frac{1}{k_n} \sum_{i=1}^{k_n} \I(Z_{ni} \leq x) 
\end{align}
denote the empirical c.d.f.\ of $Z_{n1},\ldots,Z_{nk_n}$. By Equation \eqref{eqnhut}, we obtain
	\begin{align}
		A_n 
		&= 
		\frac{1}{\wk} \sum_{i=1}^{k_n} \log(1-\hat {N}_{ni}) - \log\left( Z_{ni}  b_n\ho{-1} \right) \nonumber\\
		&= 
		\frac{1}{\wk} \sum_{i=1}^{k_n} \log\left\{ \frac{n}{n+1} \left( \frac{1}{n}+1-\tilde {N}_{ni} \right) \right\}
		-  \log\left( \frac{Z_{ni}}{ b_n} \right) \nonumber\\
		&=  
		\frac{1}{\wk} \sum_{i=1}^{k_n} \left[ \log \left\{ \frac{1}{n} 
		+ \frac{\wk}{n} e_n(Z_{ni}) + 
		\frac{Z_{ni}}{b_n} \right\} - \log\left( \frac{Z_{ni}}{ b_n} \right)  + \log\left(\frac{n}{n+1} \right) \right] \nonumber\\
		&= 
		\frac{1}{\wk} \sum_{i=1}^{k_n} \log\left\{ 1+ 
		\frac{\sqrt{k_n}b_n}{n} \cdot  \frac{e_n(Z_{ni})}{Z_{ni}} + \frac{b_n}{nZ_{ni}} \right\}
		+ \wk \log\Big(\frac{n}{n+1}\Big) \nonumber\\
		&= \int_{0}^{\infty} W_n(x) \, \mathrm{d}\hat{H}_{k_n}(x) +o(1), \label{Andisj}
	\end{align}  
	where 
	\[
	W_n(x) = \wk \log \left\{ 1 + \frac{1}{\wk}
		\left( \frac{e_n(x)}{x} +\frac{1}{\sqrt{k_n} x}\right) \right\}. 
	\]
Heuristically, $\hat H_{k_n}(x) \approx 1-\exp(-\theta x)$ and $W_n(x) \approx e(x)/x$ (where $e$ denotes the limit of the tail empirical process), whence the tentative limit of $A_n$ should be 
\[
A = \int_{0}^{\infty} \frac{e(x)}x \theta e^{-\theta x} \diff x.
\]  

For a rigorous treatment of $A_n + B_n$, let
	\begin{align*}
	E_n &= \int_{0}^{\infty} W_n(x) \, \mathrm{d}\hat{H}_{k_n}(x), & 
		E_{n,m} &= \int_{1/m}^{m} W_n(x) \ \mathrm{d}\hat{H}_{k_n}(x), &
		E_m' &= \int_{1/m}^{m} \ \frac{e(x)}{x} \ \theta e^{-\theta x} \ \mathrm{d}x
	\end{align*}	
	and let $B$ be defined as in Lemma~\ref{lem:cfg-fidis} below.
	As shown above, $A_n = E_n + o(1)$. The proposition is hence a consequence of Wichura's theorem (\cite{Bil79}, Theorem 25.5) and the following items:
	\smallskip
	\begin{compactenum}[(i)]
	\item For all $m \in \N$: $E_{n,m} + B_n \wto  E'_m + B$ as $n\to\infty$.
	\item  $E'_m + B  \wto  A + B \sim \Nor(0, \sigma^2_{\djb, {\rm C}}/\theta^2)$ as $m\to\infty$.
	\item For all $\delta>0$: $\lim_{m\to\infty} \limsup_{n\to\infty} \Prob(|E_{n} - E_{n,m} |> \delta) = 0$.
	\end{compactenum}
	\smallskip
	The assertion in (i) is proven in Lemma~\ref{lem:cfg-enmbn}.
	The assertion in (ii) follows from the fact that $E'_m + B$ is normally distributed with variance $\tau_m^2$ as specified in Lemma \ref{lem:cfg-enmbn}, and the fact that $\tau_m^2  \to \sigma^2_{\djb, {\rm C}}/\theta^2$ as $m\to\infty$ by Lemma~\ref{lem:cfg-sigcon}. Finally, Lemma~\ref{lem:cfg-limsup} proves (iii).
\end{proof}

\begin{prop} \label{prop:cfg_snhat2}
Under Condition \ref{cond:BerBuc}, \ref{cond2}(i), \ref{cond3}(i) and \ref{cond_CFG}, we have
\[ 
	\sqrt{k_n} \{ \hat{S}_n^\slb - \varphi_{\rm (C)}(\theta)\} \wto \Nor(0,\sigma^2_{\slb, \rm C} / \theta^2)  \quad \textrm{ as } n \to \infty. 
\]
\end{prop}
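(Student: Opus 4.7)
The plan is to mirror the proof of Proposition~\ref{prop:cfg_snhat}, substituting the sliding-blocks empirical measure for the disjoint one throughout, and to derive the sliding-blocks covariance from the two-level point process limits in Condition~\ref{cond:BerBuc}(i). First I would decompose
\[
\sqrt{k_n}\{\hat S_n^\slb - \varphi_{\rm (C)}(\theta)\} = A_n^\slb + B_n^\slb + C_n^\slb,
\]
with $A_n^\slb=\sqrt{k_n}\{\hat S_n^\slb-S_n^\slb\}$, $B_n^\slb=\sqrt{k_n}\{S_n^\slb-\Exp[S_n^\slb]\}$ and $C_n^\slb=\sqrt{k_n}\{\Exp[S_n^\slb]-\varphi_{\rm (C)}(\theta)\}$. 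By stationarity $\Exp[S_n^\slb]=\Exp[\log Z_{1:b_n}]$, so Condition~\ref{cond3}(i) yields $C_n^\slb=o(1)$. For $A_n^\slb$, the algebraic identity leading to \eqref{eqnhut} and \eqref{Andisj} carries over verbatim because $\hat F_n$ is built from all $n$ observations; hence
\[
A_n^\slb = \int_0^\infty W_n(x)\,\diff \hat H_n^\slb(x) + o(1),
\qquad
\hat H_n^\slb(x) = \frac{1}{n-b_n+1}\sum_{t=1}^{n-b_n+1}\I(Z_{nt}^\slb\le x),
\]
with $W_n$ exactly as in \eqref{Andisj}. Setting $E_n^\slb$ and $E_{n,m}^\slb$ to be the integrals of $W_n$ against $\hat H_n^\slb$ over $[0,\infty)$ and $[1/m,m]$ respectively, the proposition follows from Wichura's theorem and three items analogous to (i)--(iii) in the disjoint proof: (i$'$) joint weak convergence $(E_{n,m}^\slb,B_n^\slb)\wto(E_m',B^\slb)$ for each $m$; (ii$'$) convergence of the limiting variance $\tau_m^{\slb,2}$ to $\sigma_{\slb,\rm C}^2/\theta^2$ as $m\to\infty$; and (iii$'$) $\lim_{m}\limsup_n\Prob(|E_n^\slb-E_{n,m}^\slb|>\delta)=0$ for every $\delta>0$.

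The main obstacle is item (i$'$): identifying the joint Gaussian limit of $(E_{n,m}^\slb,B_n^\slb)$ with the correct sliding covariance. This would be done, as for Theorem~3.2 in \cite{BerBuc18}, by rewriting $B_n^\slb$ as an integral of $\log$ against $\hat H_n^\slb$ and then establishing a sliding blocks analogue of Lemma~\ref{lem:cfg-fidis} via the joint convergence of $e_n$ and the sliding-blocks empirical measure for the bivariate functional $(W_n,\log)$. The novel computation, relative to the disjoint case, is the covariance between two sliding summands at lag $sb_n$ with $s\in[0,1]$: expressing the pair $(Z_{nt}^\slb,Z_{n,t+sb_n}^\slb)$ as a transformation of the two-level exceedance counts over the shared and disjoint parts of the blocks and integrating the resulting two-level point process covariance in $s$ over $[0,1]$ is precisely what produces the universal correction $\pi^2/6-8\log(2)+4$ appearing in $\sigma_{\djb,\rm C}^2-\sigma_{\slb,\rm C}^2$. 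Item (ii$'$) is then a dominated-convergence argument, parallel to Lemma~\ref{lem:cfg-sigcon}, once the covariance kernel has been written as an integral over $[0,1]$.

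Item (iii$'$) is the other technical core and is where Condition~\ref{cond_CFG} is needed: $W_n(x)\approx e(x)/x$ is unbounded as $x\downarrow 0$ and only slowly decaying as $x\to\infty$, so we must control the mass that the sliding empirical measure places in the tails. For the left tail, by stationarity $\Exp[\hat H_n^\slb([0,ck_n^{-\mu}])]=\Prob(Z_{1:b_n}\le ck_n^{-\mu})$, so Condition~\ref{cond_ewerte} together with Condition~\ref{cond_max} controls $\int_0^{1/m}|W_n|\,\diff\hat H_n^\slb$ uniformly in $n$; a second-moment bound on the sliding empirical measure coming from Condition~\ref{cond:BerBuc}(ii) upgrades the control from expectation to probability. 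For the right tail, Condition~\ref{cond2}(i) gives uniform integrability of $\log^{2+\delta}Z_{nt}^\slb$, and the direct bound $|W_n(x)|\lesssim |e_n(x)/x|+(\log x)_+$ on $[m,\infty)$ controls $\int_m^\infty W_n\,\diff\hat H_n^\slb$ by the same arguments as Lemma~\ref{lem:cfg-limsup}, upon observing that the sliding empirical measure has the same expected $[m,\infty)$-mass as the disjoint one.
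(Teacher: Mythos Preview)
Your proposal is correct and follows essentially the same approach as the paper: the same decomposition $A_n^\slb+B_n^\slb+C_n^\slb$, the same integral representation of $A_n^\slb$ against the sliding empirical c.d.f.\ $\hat H_n^\slb$, and the same Wichura scheme with items (i$'$)--(iii$'$) handled by sliding-blocks analogues of Lemmas~\ref{lem:cfg-enmbn}--\ref{lem:cfg-limsup} (the paper's Lemmas~\ref{lem:cfg-enmbn2}--\ref{lem:cfg-limsup2}). The novel computation you identify --- that only $\Var(B^\slb)=8\log 2-4$ replaces $\Var(B)=\pi^2/6$ while the cross-covariance $\Cov(e(x),B^\slb)=f(x)$ is unchanged --- is exactly what the paper establishes in Lemmas~\ref{lem:cov-cfg1} and \ref{lem:varcfg1} via the lag-integration over $\xi\in[0,1]$ that you sketch.
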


\begin{proof} The proof is very similar to the proof of Proposition~\ref{prop:cfg_snhat}. Decompose
\[ 
\sqrt{k_n} \{ \hat{S}^{\slb}_n-g(\theta) \} 
= A_n^{\slb}+B_n^{\slb}+C_n^{\slb}, \]
where 
\[
A_n^{\slb} := \sqrt{k_n} \{\hat{S}^{\slb}_n-S^{\slb}_n\}, \quad 
B_n^{\slb}:= \sqrt{k_n}\{ S^{\slb}_n-E[S^{\slb}_n] \},  \quad 
C_n^{\slb}:= \sqrt{k_n} \{E[S^{\slb}_n]-\varphi_{\rm (C)}(\theta)\}.
\]
Again, we have $C_n^\slb=o(1)$ by Condition \ref{cond3}(i).  A similar calculation as in (\ref{Andisj}) in the case of the disjoint blocks shows that $A_n^{\slb}$ can be written in the following way
\begin{equation}
	A_n^{\slb} = 
	 \int_{0}^{\infty} W_n(x) \ \mathrm{d}\hat{H}_{n}^{\slb}(x) +o(1), \nonumber
\end{equation}
where 
\[ 
\hat{H}_{n}^{\slb} (x) = \frac{1}{n-b_n+1} \sum_{t=1}^{n-b_n+1} \I(Z_{nt}^{\slb} \leq x) 
\] 
denotes the empirical c.d.f. of $Z_{n1}^{\slb},\ldots, Z_{n,n-b_n+1}^{\slb}$.  We may now treat $A_{n}^\slb + B_n^{\slb}$  exactly as $A_n + B_n$ in the proof of Proposition~\ref{prop:cfg_snhat}, with $E_n, E_{n,m}$ and Lemma 
\ref{lem:cfg-enmbn}, \ref{lem:cfg-sigcon} and \ref{lem:cfg-limsup} replaced by 
	\begin{align*}
	E_n^\slb &= \int_{0}^{\infty} W_n(x) \, \mathrm{d}\hat{H}_{n}^\slb(x), & 
		E_{n,m}^\slb &= \int_{1/m}^{m} W_n(x) \ \mathrm{d}\hat{H}_{n}^\slb(x), 
	\end{align*}	
	 and Lemma \ref{lem:cfg-enmbn2}, \ref{lem:cfg-sigcon2} and \ref{lem:cfg-limsup2}, respectively.
\end{proof}


\subsection{Proof of Theorem \ref{theo:mad}} 
\label{subsec:pmad} 

The following notation will be used throughout:
\begin{align*}
 \hat{S}_n & = \frac{1}{k_n} \sum_{i=1}\ho{k_n} \exp(-\hat{Z}_{ni}), &S_n& = \frac{1}{k_n} \sum_{i=1}\ho{k_n} \exp(-Z_{ni}),\\
 \hat{S}_n\ho{\slb} & = \frac{1}{n-b_n+1} \sum_{i=1}\ho{n-b_n+1} \exp(-\hat{Z}_{ni}\ho{\slb}),&  S_n\ho{\slb} & = \frac{1}{n-b_n+1} \sum_{i=1}\ho{n-b_n+1} \exp(-Z_{ni}\ho{\slb}).
\end{align*}
Note that $\thetahat{\djb}{\MAD}{z_n} = \varphi_{(\rm M)}^{-1}(\hat S_n)$ and $\thetahat{\slb}{\MAD}{z_n} = \varphi_{(\rm M)}^{-1}(\hat S_n^{\slb})$, where $\varphi_{(\rm M)}(x) = x/(1+x)$. The assertion follows from the delta-method and Proposition \ref{Prop_Mad_dj} and \ref{Prop_Mad_sl}.  \qed

\begin{prop} \label{Prop_Mad_dj}
 Under Condition \ref{cond:BerBuc} and \ref{cond3}(ii), we have 
 \[ \sqrt{k_n} \{ \hat S_n - \varphi_{(\rm M)}(\theta)\} \wto \Nor(0,\sigma_{\djb, \rm M}^2/(1+\theta)^4) \ \textrm{ as }  n \to \infty. \]
\end{prop}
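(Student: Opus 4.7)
The strategy mirrors Proposition \ref{prop:cfg_snhat} for the CFG estimator but is technically lighter because $x \mapsto e^{-x}$ is bounded and globally Lipschitz on $[0,\infty)$; in particular, neither Wichura's theorem nor the auxiliary Condition \ref{cond_CFG} enters. Setting $A_n = \sqrt{k_n}(\hat S_n - S_n)$, $B_n = \sqrt{k_n}(S_n - \Exp[S_n])$ and $C_n = \sqrt{k_n}(\Exp[S_n] - \varphi_{(\rm M)}(\theta))$, I would first write
\[
\sqrt{k_n}\{\hat S_n - \varphi_{(\rm M)}(\theta)\} = A_n + B_n + C_n.
\]
The bias term $C_n$ is $o(1)$ by Condition \ref{cond3}(ii). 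Since $\varphi_{(\rm M)}^{-1}(x) = x/(1-x)$ has derivative $(1+\theta)^2$ at $\varphi_{(\rm M)}(\theta) = \theta/(1+\theta)$, the delta-method reduces the proposition to the claim that $A_n + B_n \wto \Nor(0, \sigma^2_{\djb,{\rm M}}/(1+\theta)^4)$.

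To linearize $A_n$, I would use \eqref{eqnhut} together with $\hat N_{ni} = \tfrac{n}{n+1}\tilde N_{ni}$ to obtain, uniformly in $i=1,\dots,k_n$,
\[
\hat Z_{ni} - Z_{ni} = \frac{e_n(Z_{ni})}{\sqrt{k_n}} + O_\Prob\Big(\frac{b_n}{n}\Big).
\]
A first-order Taylor expansion of $\exp(-\cdot)$, combined with the boundedness of $y \mapsto y e^{-y}$ to control the quadratic remainder, then yields
\[
A_n = -\frac{1}{k_n}\sum_{i=1}^{k_n} e^{-Z_{ni}}\, e_n(Z_{ni}) + o_\Prob(1) = -\int_0^\infty e^{-x}\, e_n(x)\, \mathrm d\hat H_{k_n}(x) + o_\Prob(1),
\]
with $\hat H_{k_n}$ defined in \eqref{eq:hkn}. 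In contrast to the CFG case, the integrand $e^{-x}$ is bounded on $[0,\infty)$ and integrable against any probability measure there, so no truncation to $[1/m,m]$ is required and the limiting behavior is governed entirely by the joint convergence of $(e_n, \hat H_{k_n})$.

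For the joint convergence $(A_n, B_n) \wto (A,B)$, I would invoke the convergence of the bivariate point process of exceedances $\bm N_n^{(x_1,x_2)}$ provided by Condition \ref{cond:BerBuc}(i); both $A_n$ (in its integral form above) and $B_n = k_n^{-1/2} \sum_i \{e^{-Z_{ni}} - \Exp[e^{-Z_{ni}}]\}$ are asymptotically continuous functionals of this process. A standard big-block/small-block argument under the mixing of Condition \ref{cond:BerBuc}(iii), together with the uniform integrability supplied by Condition \ref{cond:BerBuc}(ii), then delivers joint asymptotic normality with mean zero.

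The main obstacle will be matching the limiting variance to the stated form $\sigma^2_{\djb,{\rm M}}/(1+\theta)^4$. The computation splits into three pieces: (i) the within-block contribution, $\Var(e^{-\xi}) = \theta/\{(2+\theta)(1+\theta)^2\}$ for $\xi \sim \Ed(\theta)$; (ii) the between-block contribution to $\Var(B)$, expressed through the two-level limit laws $\pi_2^{(z)}$ after the change of variables $z = x_2/x_1$ and integration over $x_1$; (iii) the cross-covariance $\Cov(A,B)$ together with $\Var(A)$. Items (ii) and (iii) combine to produce the integral
\[
4\theta^2(1+\theta)\int_0^1 \frac{\theta \Exp[\xi_1^{(z)}\xi_2^{(z)}] - \Exp[\xi_1^{(z)}\ind(\xi_2^{(z)}>0)]}{(1+z)^3}\, \mathrm dz,
\]
while the remaining within-block and interaction pieces collapse to the constant $\theta^2(1+\theta)/\{2(2+\theta)\}$. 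As a sanity check, at $\theta=1$ the integrand vanishes (since $\Exp[\xi_1^{(z)}\xi_2^{(z)}] = \Exp[\xi_1^{(z)}\ind(\xi_2^{(z)}>0)] = z$) and the constant reduces to $1/3$, consistent with Example \ref{ex:indep} via division by $(1+\theta)^4 = 16$. The delta-method then completes the proof.
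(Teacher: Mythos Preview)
Your decomposition $A_n+B_n+C_n$ and the integral representation of $A_n$ against $\hat H_{k_n}$ coincide with the paper's proof. Two differences are worth noting.

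First, the paper writes $A_n=\int_0^\infty W_n(x)\,\mathrm d\hat H_{k_n}(x)$ with the \emph{exact} integrand $W_n(x)=\sqrt{k_n}\,e^{-x}\{\exp(-e_n(x)/\sqrt{k_n})-1\}$, whereas you first Taylor-linearize to $-e^{-x}e_n(x)$ and carry a remainder. Your remainder control (``boundedness of $y\mapsto ye^{-y}$'') is not quite the right tool: the second-order term is $\tfrac{1}{2\sqrt{k_n}}\sum_i e^{-\xi_{ni}}(\hat Z_{ni}-Z_{ni})^2$, and bounding $e^{-\xi_{ni}}\le 1$ still leaves you with $k_n^{-3/2}\sum_i e_n(Z_{ni})^2$, which requires a uniform bound on $e_n$ over the random range of the $Z_{ni}$---exactly the kind of tail control you are trying to avoid.

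Second, and more substantively, the paper does \emph{not} bypass truncation and Wichura's theorem: it explicitly says the asymptotic normality of $A_n+B_n$ ``can now be shown as in the proof of Proposition~\ref{prop:cfg_snhat}'', i.e.\ via $E_{n,m}$, $E_m'$ and the three Wichura conditions. Your claim that boundedness of $e^{-x}$ makes truncation unnecessary is not substantiated: weak convergence of $e_n$ is only available in $D([0,M])$ for finite $M$, so passing to the limit in $\int_0^\infty e^{-x}e_n(x)\,\mathrm d\hat H_{k_n}(x)$ still requires showing that the contribution from $\{Z_{ni}\ge m\}$ is asymptotically negligible as $m\to\infty$---which is precisely Wichura's condition~(iii). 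The genuine simplification over the CFG case is not that this condition disappears, but that it (and its counterpart near $0$) becomes easy to verify because $e^{-x}$ is bounded and decays, which is why Condition~\ref{cond_CFG} is not needed here. As stated, your proposal has a gap at this step.

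A minor slip: the delta method is not used inside this proposition (it is just $C_n=o(1)$ plus the convergence of $A_n+B_n$); it enters only when passing from Proposition~\ref{Prop_Mad_dj} to Theorem~\ref{theo:mad}. Your variance sketch and the $\theta=1$ sanity check are fine.
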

\begin{proof}
 Write $\sqrt{k_n} \{ \hat{S}_n - \varphi_{\rm (M)}(\theta)\} = A_n + B_n + C_n,$ where 
\[ 
A_n = \wk \{\hat{S}_n-S_n\}, \ B_n = \wk \{S_n - E[S_n]\}, \ C_n = \wk \{E[S_n]-\varphi_{\rm (M)}(\theta)\}. 
\]
The term $C_n$ is asymptotically negligible by Condition \ref{cond3}(ii). A straightforward calculation shows that the summand $A_n$  can be written in terms of the tail empirical process $e_n$ as 
\begin{align}
 A_n 
 &= \intnu W_n(x) \ \mathrm{d}\hat{H}_{k_n}(x), \qquad 
 W_n(x) = \wk e\ho{-x} \left[  \exp(-e_n(x) k_n\ho{-1/2}) -1 \right], 
 \nonumber \label{AnMado}
\end{align}
where $\hat H_{k_n}$ is the empirical c.d.f. of $Z_{n1},\ldots,Z_{nk_n}$, see \eqref{eq:hkn}.
The asymptotic normality of $A_n+B_n$ can now be shown  as in the proof of Proposition~\ref{prop:cfg_snhat}. The corresponding key result is given by Lemma \ref{MadodjLem1};  whose  proof is similar (but easier) as for the CFG-estimator (Lemma~\ref{lem:cfg-fidis}) and is omitted for the sake of brevity.
\end{proof}

\begin{lem} \label{MadodjLem1}
 (a) For any $x_1,\ldots,x_m \in [0,\infty)$, as $ n \to \infty$, 
 \[ (e_n(x_1),\ldots,e_n(x_m),B_n) \wto (e(x_1),\ldots,e(x_m),B) \sim \Nor_{m+1}(0,\Sigma_{m+1}), \]
 with \[ \Sigma_{m+1} = \begin{pmatrix}
                          r(x_1,x_1) & \dots & r(x_1,x_m) & f(x_1) \\
                          \vdots & \ddots & \vdots & \vdots \\
                          r(x_m,x_1) & \dots & r(x_m,x_m) & f(x_m)\\
                          f(x_1) & \dots & f(x_m) & \frac{\theta}{\theta+2} - \frac{\theta\ho{2}}{(\theta + 1)\ho{2}}
                         \end{pmatrix},
 \] where the covariance function $r$ is given as in Lemma \ref{lem:cfg-fidis} and
 \begin{align}
  f(x) &= \sum_{i=1}\ho{\infty} i \intne p\ho{(x)}(i)- p_2\ho{(x,-\log(y))}(i,0) \I(x \geq -\log(y)) \ \mathrm{d}y - x \varphi_{\rm (M)}(\theta).
  \nonumber
 \end{align}
(b) For any $x_1,\ldots,x_m \in [0,\infty)$, as $ n \to \infty$,
\[ (W_n(x_1),\ldots,W_n(x_m),B_n) \wto (-e\ho{-x_1}e(x_1),\ldots,-e\ho{-x_m} e(x_m),B). \]
\end{lem}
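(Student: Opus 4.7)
The plan is to follow the blueprint of the CFG-analogue Lemma \ref{lem:cfg-fidis}, with substantial simplifications stemming from the boundedness $\exp(-Z_{ni}) \in [0,1]$. For part (a), I would apply the Cram\'er--Wold device and reduce the assertion to a univariate CLT for the linear combination
\[
L_n = \sum_{j=1}^m a_j e_n(x_j) + a_{m+1} B_n = \frac{1}{\sqrt{k_n}} \sum_{i=1}^{k_n} T_{ni},
\]
where each summand $T_{ni}$ is a function of the observations in the $i$-th disjoint block only. A standard big-block/small-block decomposition, leveraging the mixing Condition~\ref{cond:BerBuc}(iii) to introduce gaps of length $\ell_n$, reduces the problem to a CLT for row-sums of (almost) independent summands. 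The requisite Lyapunov condition follows from the $[0,1]$-boundedness of $\exp(-Z_{ni})$ together with Condition~\ref{cond:BerBuc}(ii), which supplies the needed $(2+\delta)$-th moment bound for the $e_n$-contributions.

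The main substantive work is the identification of the limit covariance matrix $\Sigma_{m+1}$. The diagonal block $(r(x_j,x_k))$ is exactly the one from Lemma~\ref{lem:cfg-fidis} and arises as in \cite{BerBuc18} from the two-level point process convergence in Condition~\ref{cond:BerBuc}(i). For the bottom-right entry I would show that the between-block covariances $\Cov(\exp(-Z_{n1}),\exp(-Z_{n,1+h}))$ are asymptotically negligible under the mixing Condition~\ref{cond:BerBuc}(iii), while the per-block variance $\Var(\exp(-Z_{n1}))$ converges by bounded convergence to
\[
\Var(\exp(-\xi)) = \frac{\theta}{\theta+2} - \frac{\theta^2}{(\theta+1)^2}, \qquad \xi \sim \Ed(\theta),
\]
using the convergence $Z_{n1}\wto\xi$ implied by Condition~\ref{cond3}(ii). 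For the cross-covariance $f(x_j)$ I would rewrite $\exp(-Z_{n1}) = \int_0^\infty e^{-y}\I(Z_{n1}<y)\,\diff y$, apply Fubini to interchange expectation and integral, substitute $u = e^{-y}$ so that the integration runs over $(0,1)$, and observe that $\{Z_{n1}<-\log u\}$ coincides with the event of at least one exceedance of the threshold $1-(-\log u)/b_n$ in the first block. The integrand is then the covariance between the count of exceedances at level $1-x_j/b_n$ in block one and the indicator of at least one exceedance at the second level, which under Condition~\ref{cond:BerBuc}(i) converges to the stated expression in terms of $p^{(x)}$ and $p_2^{(x,-\log y)}$, the linear centering $-x\,\varphi_{(\rm M)}(\theta)$ accounting for the subtracted means.

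Part (b) is then a short consequence of part (a): a second-order Taylor expansion yields
\[
W_n(x) = \sqrt{k_n}\,e^{-x}\bigl[\exp\bigl(-e_n(x)/\sqrt{k_n}\bigr)-1\bigr] = -e^{-x}e_n(x) + O_\Prob\!\bigl(k_n^{-1/2}\bigr),
\]
so the joint weak convergence in (b) follows from part (a) via the continuous mapping theorem combined with Slutsky's lemma. I expect the main obstacle to be the careful bookkeeping for the cross-covariance $f(x)$: the passage to the limit must be justified uniformly in the dummy variable $y$, both for $y\to\infty$ (where the factor $e^{-y}$ tames the bivariate point-process expectation) and near the diagonal (where the bivariate limit degenerates as $\sigma\to 1$). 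This step is however markedly cleaner than its CFG-counterpart, where the unboundedness of $\log$ near $0$ and $\infty$ forces an additional truncation argument.
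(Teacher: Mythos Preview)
Your proposal is correct and follows exactly the approach the paper indicates: the paper explicitly omits the proof of this lemma, stating that it is ``similar (but easier) as for the CFG-estimator (Lemma~\ref{lem:cfg-fidis}),'' and your outline mirrors that argument precisely---Cram\'er--Wold, big-block/small-block with gaps $\ell_n$, Ljapunov CLT, and identification of the limiting covariances via the two-level point-process limits. One small slip: the weak convergence $Z_{n1}\wto\xi$ that you invoke for the bottom-right variance entry comes from the existence of the extremal index (relation~\eqref{eq:zb}), not from Condition~\ref{cond3}(ii), which is only a bias-rate condition; the boundedness of $\exp(-\cdot)$ then gives convergence of moments without any further integrability assumption, which is indeed why Condition~\ref{cond2} is not needed for the Madogram estimator.
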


\begin{prop} \label{Prop_Mad_sl}
 Under Condition \ref{cond:BerBuc} and \ref{cond3}(ii), we have
 \[ \sqrt{k_n} \{ \hat S_n^{\slb} - \varphi_{(\rm M)}(\theta)\} \wto \Nor(0,\sigma_{\slb, \rm M}^2/(1+\theta)^4) \ \textrm{ as }  n \to \infty.\]
\end{prop}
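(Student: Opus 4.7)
The approach parallels that of Proposition~\ref{Prop_Mad_dj}, with the disjoint-blocks empirical c.d.f.\ $\hat H_{k_n}$ replaced throughout by its sliding counterpart $\hat H_n^{\slb}$, and with a refined joint-limit lemma replacing Lemma~\ref{MadodjLem1}. The plan is to write
\[
\sqrt{k_n}\{\hat S_n^{\slb} - \varphi_{\rm (M)}(\theta)\} = A_n^{\slb} + B_n^{\slb} + C_n^{\slb},
\]
where
\[
A_n^{\slb} = \sqrt{k_n}\{\hat S_n^{\slb}-S_n^{\slb}\}, \quad B_n^{\slb} = \sqrt{k_n}\{S_n^{\slb}-\Exp[S_n^{\slb}]\}, \quad C_n^{\slb} = \sqrt{k_n}\{\Exp[S_n^{\slb}]-\varphi_{\rm (M)}(\theta)\}.
\]
By stationarity, $\Exp[S_n^{\slb}] = \Exp[\exp(-Z_{1:b_n})]$, so $C_n^{\slb}=o(1)$ by Condition~\ref{cond3}(ii). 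Applying the same pointwise identity $1-\hat N_{nt}^{\slb} = \sqrt{k_n}/n\cdot e_n(Z_{nt}^{\slb}) + Z_{nt}^{\slb}/b_n$ as in the disjoint case yields the integral representation
\[
A_n^{\slb} = \int_0^\infty W_n(x)\,\mathrm{d}\hat H_n^{\slb}(x) + o_\Prob(1),
\]
with the same integrand $W_n(x) = \sqrt{k_n}\,e^{-x}\{\exp(-e_n(x)k_n^{-1/2})-1\}$ as in Proposition~\ref{Prop_Mad_dj}.

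The next step is to establish an analogue of Lemma~\ref{MadodjLem1}: for any $x_1,\ldots,x_m \ge 0$,
\[
\bigl(e_n(x_1),\ldots,e_n(x_m),B_n^{\slb}\bigr) \wto \bigl(e(x_1),\ldots,e(x_m),B^{\slb}\bigr),
\]
jointly Gaussian with the same marginal law for the $e_n$-coordinates (so $W_n(x_j) \wto -e^{-x_j} e(x_j)$ exactly as before), but with $B^{\slb}$ having variance equal to $\mathrm{Var}(B)-\Delta$, where $\Delta>0$ is the correction in $\sigma_{\slb,\rm M}^2-\sigma_{\djb,\rm M}^2$. I would prove this by a big-blocks/small-blocks decomposition applied to
\[
B_n^{\slb} = \frac{1}{\sqrt{k_n}(n-b_n+1)} \sum_{t=1}^{n-b_n+1} \bigl\{e^{-Z_{nt}^{\slb}} - \Exp[e^{-Z_{nt}^{\slb}}]\bigr\},
\]
combined with the two-level point-process convergence guaranteed by Condition~\ref{cond:BerBuc}(i). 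Concretely, the asymptotic covariance $\lim_n \Cov(e^{-Z_{n1}^{\slb}},e^{-Z_{n,1+\lfloor z b_n\rfloor}^{\slb}})$ for overlap fraction $1-z \in [0,1]$ is computed via the conditional cluster-size distribution $\pi_2^{(z)}$, and the usual Riemann-sum limit in $z$ produces the integral corrections; an analogous computation handles $\lim_n\Cov(e_n(x),B_n^{\slb})$. Together with the tail-empirical-process convergence from Condition~\ref{cond:BerBuc}(i) applied at the fixed points $x_j$, this gives the finite-dimensional joint convergence; asymptotic normality of the resulting linear combinations follows from a standard blocking CLT as in Section~3 of \citet{BerBuc18}.

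With the joint finite-dimensional limit in hand, Wichura's theorem is applied just as in the proof of Proposition~\ref{prop:cfg_snhat2}: denoting by $E_{n,m}^\slb = \int_{1/m}^m W_n(x)\,\mathrm{d}\hat H_n^\slb(x)$ the truncated integral, the steps are (i) $E_{n,m}^\slb+B_n^\slb \wto \int_{1/m}^m -e^{-x} e(x)\,\theta e^{-\theta x}\,\mathrm{d}x + B^\slb$ as $n\to\infty$, (ii) the latter is centered Gaussian with variance converging to $\sigma_{\slb,\rm M}^2/(1+\theta)^4$ as $m\to\infty$, and (iii) uniform negligibility of the tail contributions $\int_0^{1/m} + \int_m^\infty W_n\,\mathrm{d}\hat H_n^\slb$, which is handled by the bound $|W_n(x)| \le \sqrt{k_n}\,e^{-x}$ combined with the moment control in Condition~\ref{cond:BerBuc}(ii). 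Finally the delta-method applied to $\varphi_{\rm (M)}^{-1}$ at $\varphi_{\rm (M)}(\theta)$, whose derivative equals $(1+\theta)^2$, converts the $\sigma_{\slb,\rm M}^2/(1+\theta)^4$ variance of $\hat S_n^\slb$ into the stated $\sigma_{\slb,\rm M}^2$ for $\thetahat{\slb}{\MAD}{z_n}$.

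The main obstacle is the exact evaluation of the limiting covariance structure of $B_n^{\slb}$ and of $\Cov(e_n(x),B_n^{\slb})$: both quantities involve integrals over the overlap parameter $z\in[0,1]$ against the two-level cluster distribution $\pi_2^{(z)}$, and it is the careful bookkeeping of these integrals that produces the explicit closed-form correction $-\{3\theta^2+4\theta-4(1+\theta)(2+\theta)\log[2(1+\theta)/(2+\theta)]\}/\{\theta(2+\theta)(1+\theta)^2\}$ differentiating $\sigma_{\slb,\rm M}^2$ from $\sigma_{\djb,\rm M}^2$. Everything else is essentially a repetition of the disjoint-blocks argument of Proposition~\ref{Prop_Mad_dj}.
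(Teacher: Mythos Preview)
Your approach is correct and follows the paper's route exactly: the same $A_n^{\slb}+B_n^{\slb}+C_n^{\slb}$ decomposition, the same integral representation of $A_n^{\slb}$ with the identical integrand $W_n$, and the replacement of Lemma~\ref{MadodjLem1} by its sliding-blocks analogue (the paper's Lemma~\ref{MadslLem1}), followed by Wichura's theorem. Two minor slips to correct: the normalization in your displayed formula for $B_n^{\slb}$ should be $\sqrt{k_n}/(n-b_n+1)$, not $1/\{\sqrt{k_n}(n-b_n+1)\}$; and the bound $|W_n(x)|\le\sqrt{k_n}\,e^{-x}$ is not valid in general (since $\exp(-e_n(x)/\sqrt{k_n})$ can exceed $2$), so step~(iii) needs the more careful tail treatment used for the CFG case, though the presence of the $e^{-x}$ factor does make it genuinely easier here.
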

\begin{proof}
 The proof is similar to the proof of Proposition \ref{Prop_Mad_dj}. Decompose 
$\wk \{\hat{S}_n\ho{\slb} - \varphi_{\rm (M)}(\theta)\} = A_n\ho{\slb} + B_n\ho{\slb} +C_n\ho{\slb},$ where 
\[ A_n\ho{\slb} = \wk \{ \hat{S}\ho{\slb}_n-S\ho{\slb}_n\}, \ B\ho{\slb}_n = \wk \{S\ho{\slb}_n - E[S\ho{\slb}_n]\}, 
\ C\ho{\slb}_n = \wk \{E[S\ho{\slb}_n]-\varphi_{\rm (M)}(\theta)\}. \]
Again, we have $C_n\ho{\slb}=o(1)$ by Condition \ref{cond3}(ii) and \[A_n\ho{\slb} = \intnu W_n(x) \ \mathrm{d}\hat{H}_{n}\ho{\slb}(x),\] where $\hat H_{n}^{\slb}$ denotes the empirical c.d.f. of $Z_{n1}^{\slb},\ldots,Z_{n,n-b_n+1}^{\slb}$. The sum $A_n^{\slb}+B_n^{\slb}$ can now be treated as in proof of Proposition~\ref{prop:cfg_snhat2}.
The corresponding key result, Lemma~\ref{lem:cfg-fidis2},  needs to be replaced by Lemma \ref{MadslLem1}; whose proof is again omitted for the sake of brevity.
\end{proof}

\begin{lem} \label{MadslLem1}
 (a) For any $x_1,\ldots,x_m \in [0,\infty)$, as $ n \to \infty$, 
 \[ (e_n(x_1),\ldots,e_n(x_m),B\ho{\slb}_n) \wto (e(x_1),\ldots,e(x_m),B\ho{\slb}) \sim \Nor_{m+1}(0,\Sigma\ho{\slb}_{m+1}), \]
where all entries of $\Sigma_{m+1}^{\slb}$ are the same as those of $\Sigma_{m+1}$ in Lemma~\ref{MadodjLem1} except for the entry at position $(m+1,m+1)$, which needs to be replaced by
\[ v(\theta) = 2- \frac{4}{\theta+1} + 4 \ \frac{\log(\theta+1)- \log(\theta+2)+ \log(2)}{\theta (\theta +1)} - \frac{2 \theta\ho{2}}{(\theta+1)\ho{2}}. \]
(b) For any $x_1,\ldots,x_m \in [0,\infty)$, as $ n \to \infty$,
\[ (W_n(x_1),\ldots,W_n(x_m),B\ho{\slb}_n) \wto (-e\ho{-x_1}e(x_1),\ldots,-e\ho{-x_m} e(x_m),B\ho{\slb}). \]
\end{lem}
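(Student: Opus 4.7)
The plan is to reduce part (b) to part (a) and then to prove (a) via a Cram\'er-Wold CLT, where the only genuinely new ingredient compared to the disjoint case is the computation of the marginal limiting variance of $B_n^{\slb}$.

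\textbf{Reduction of (b) to (a).} Since part (a) gives $e_n(x_j) = O_\Prob(1)$, a first-order Taylor expansion of $u\mapsto\sqrt{k_n}(e^{-u/\sqrt{k_n}}-1) = -u + O(u^2/\sqrt{k_n})$ at $u = e_n(x_j)$ yields
\[
W_n(x_j) = \sqrt{k_n}\, e^{-x_j}\bigl[\exp\{-e_n(x_j)/\sqrt{k_n}\}-1\bigr] = -e^{-x_j} e_n(x_j) + o_\Prob(1),
\]
so (b) will follow from (a) by the continuous mapping theorem.

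\textbf{Proof of (a).} By the Cram\'er-Wold device, I would establish the CLT for an arbitrary linear combination $\sum_{j=1}^m a_j e_n(x_j) + a_{m+1} B_n^{\slb}$, which is a normalized sum of local functionals of window length at most $b_n$ along the $\alpha$-mixing sequence $(X_s)$; a standard big-block-small-block argument under Condition~\ref{cond:BerBuc} delivers asymptotic normality. The covariances $r(x_j,x_k)$ among the $e_n$-coordinates are classical under the assumed mixing (as used in \cite{BerBuc18}), so only the entries involving $B_n^{\slb}$ require verification. For the cross-covariances, I would show $\mathrm{Cov}(e_n(x_j), B_n^{\slb})\to f(x_j)$ as in Lemma~\ref{MadodjLem1}. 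Using $\sqrt{k_n}/(n-b_n+1) = (1+o(1))/(\sqrt{k_n}\,b_n)$, one can write $B_n^{\slb} = k_n^{-1/2}\sum_{i=1}^{k_n}\bar g_{ni}$ with $\bar g_{ni}$ a $b_n$-fold average of centered local summands. The joint distribution of a tail event $\{U_s > 1- x_j/b_n\}$ and a sliding block maximum covering $s$ is governed by the same two-level point process as in the disjoint case, and the $b_n$-fold overlap in the sliding scheme, each weighted by $1/b_n$, reproduces the same limit as the single-block contribution in the disjoint scheme.

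\textbf{Main obstacle: the marginal variance.} The core computational step is to show $\lim_n \mathrm{Var}(B_n^{\slb}) = v(\theta)$. By stationarity,
\[
\mathrm{Var}(B_n^{\slb}) = \frac{k_n}{n-b_n+1}\sum_{|h|<n-b_n+1}\Bigl(1-\tfrac{|h|}{n-b_n+1}\Bigr)\gamma_n(h),
\]
where $\gamma_n(h) = \mathrm{Cov}(e^{-Z_{n1}^{\slb}}, e^{-Z_{n,1+h}^{\slb}})$. The mixing Condition~\ref{cond:BerBuc}(iii) makes the contribution of $|h|\ge b_n$ negligible. For $h = \lfloor c b_n\rfloor$ with $c\in[0,1]$ the two sliding maxima share an overlap of length $(1-c)b_n$, and Condition~\ref{cond:BerBuc}(i) identifies the joint limit law of $(Z_{n1}^{\slb}, Z_{n,1+h}^{\slb})$ via the two-level point process, so that after applying the bounded continuous map $(x,y)\mapsto e^{-x-y}$ and subtracting $\varphi_{(\rm M)}(\theta)^2$ one obtains a continuous limit $\gamma_n(h)\to g(c;\theta)$. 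Since $k_n b_n/(n-b_n+1)\to 1$, a Riemann-sum argument gives
\[
\lim_{n\to\infty}\mathrm{Var}(B_n^{\slb}) = 2\int_0^1 g(c;\theta)\,\diff c,
\]
and evaluating this integral (the logarithmic term in $v(\theta)$ will appear from integrals of the form $\int_0^1(c+\alpha)^{-1}\diff c$ after a suitable substitution) yields the asserted expression. The explicit identification of $g(c;\theta)$ from the compound Poisson cluster structure and the ensuing integration will be the most technical part of the proof.
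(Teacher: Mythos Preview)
Your proposal is correct and follows the same route the paper takes (the paper omits the Madogram proof but points to the CFG template in Lemma~\ref{lem:cfg-fidis2}, together with Lemmas~\ref{lem:cov-cfg1} and~\ref{lem:varcfg1}): Cram\'er--Wold plus a ``blocking of blocks'' big-block/small-block argument for the CLT, identification of the cross-covariances as the same $f(x)$ as in the disjoint case, and computation of $\Var(B_n^{\slb})$ by reducing to lags $0\le h<b_n$, parametrizing by the overlap fraction $\xi=h/b_n$, identifying the joint limit law of $(Z_{n1}^{\slb},Z_{n,1+\lfloor \xi b_n\rfloor}^{\slb})$, and integrating over $\xi\in[0,1]$. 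One small caveat on wording: the joint limit of two \emph{overlapping} sliding block maxima is not read off directly from the two-level point process $\pi_2^{(\sigma)}$; in the paper's argument it comes from splitting the union of the two windows into three asymptotically independent pieces (the two non-overlapping flanks and the shared middle), yielding the explicit survival function $\Prob(Z_{n1}^{\slb}>x,\,Z_{n,1+\lfloor \xi b_n\rfloor}^{\slb}>y)\to\exp\{-\theta[\xi(x\wedge y)+(x\vee y)]\}$, from which $g(\xi;\theta)$ and then $v(\theta)$ follow by elementary integration.
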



\subsection{Proof of Theorem \ref{theo:root}} 
\label{subsec:proot} 

For fixed $p>0$, define 
\begin{align*}
 \hat S_n &= \frac{1}{k_n} \sum_{i=1}^{k_n} \hat Z_{ni}^{1/p}, & S_n &= \frac{1}{k_n} \sum_{i=1}^{k_n} Z_{ni}^{1/p}, \\
 \hat S_n^{\slb} &= \frac{1}{n-b_n+1} \sum_{i=1}^{n-b_n+1} \hat Z_{ni}^{1/p}, & S_n^{\slb} &= \frac{1}{n-b_n+1} \sum_{i=1}^{n-b_n+1} Z_{ni}^{1/p}. 
\end{align*}
Note that $\thetahat{\djb}{\ROOT, p}{z_n} = \varphi_{\rm (R),p}^{-1}(\hat S_n)$ and $\thetahat{\slb}{\ROOT, p}{z_n} = \varphi_{\rm (R),p}^{-1}(\hat S_n^{\slb})$, where $\varphi_{\rm (R),p}(x) = x^{-1/p}\Gamma(1+1/p)$. By the delta-method, the assertion follows from Proposition \ref{Prop_Root_dj} and \ref{Prop_Root_sl}. \qed

\begin{prop} \label{Prop_Root_dj}
 Under Condition \ref{cond:BerBuc}, \ref{cond2}(ii) and \ref{cond3}(iii), we have
 \[ \sqrt{k_n} \{ \hat S_n - \varphi_{\rm (R),p}( \theta) \} \wto \Nor(0,\sigma_{\djb,p}^2 \psi_p(\theta) ) \textrm{ as } n \to \infty,\] where $\psi_p(\theta) = \Gamma(1+1/p)^2p^{-2} \theta^{-(2+2/p)}$.
\end{prop}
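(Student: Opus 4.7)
The plan mirrors the strategy used for the CFG- and madogram-estimators (Propositions~\ref{prop:cfg_snhat} and~\ref{Prop_Mad_dj}), and is in fact technically easier because the map $x \mapsto x^{1/p}$ is neither unbounded at $0$ nor as delicate at $\infty$ as $\log x$. I would decompose
\[
\sqrt{k_n}\{\hat S_n - \varphi_{\rm (R),p}(\theta)\} = A_n + B_n + C_n,
\]
with $A_n = \sqrt{k_n}(\hat S_n - S_n)$, $B_n = \sqrt{k_n}(S_n - \Exp[S_n])$ and $C_n = \sqrt{k_n}(\Exp[S_n] - \varphi_{\rm (R),p}(\theta))$. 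The bias $C_n = o(1)$ follows directly from Condition~\ref{cond3}(iii), so the remaining work is to establish joint asymptotic normality of $(A_n,B_n)$.

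For the rank-transformation term $A_n$, I would invoke the identity~\eqref{eqnhut}, which expresses $b_n(1-\tilde N_{ni})$ as $Z_{ni}+e_n(Z_{ni})/\sqrt{k_n}$ in terms of the tail empirical process $e_n$ from~\eqref{eq:tep}, and then apply a first-order Taylor expansion of $x \mapsto x^{1/p}$. After absorbing the $1/(n+1)$-adjustment into an $o_\Prob(1)$ term, this yields
\[
A_n = \int_0^\infty W_n(x)\,\diff \hat H_{k_n}(x) + o_\Prob(1), \qquad W_n(x) = \sqrt{k_n}\bigl\{(x+e_n(x)/\sqrt{k_n})^{1/p} - x^{1/p}\bigr\},
\]
with $\hat H_{k_n}$ the empirical c.d.f.\ from~\eqref{eq:hkn}. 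Heuristically $W_n(x) \approx \tfrac{1}{p} x^{1/p-1}e_n(x)$, suggesting the tentative limit $A = \tfrac{1}{p}\int_0^\infty x^{1/p-1} e(x)\,\theta e^{-\theta x}\,\diff x$. Joint finite-dimensional convergence of $(e_n(x_1),\dots,e_n(x_m),B_n)$ to a centred Gaussian vector is then established by a blocking argument based on the one- and two-level point process limits guaranteed by Condition~\ref{cond:BerBuc}(i), paralleling Lemma~\ref{lem:cfg-fidis}; the limiting covariance is identified in terms of $\Exp[\xi_1^{(z)}\xi_2^{(z)}]$ and $\Exp[\xi_1^{(z)}\I(\xi_2^{(z)}=0)]$ and produces the factor $\sigma_{\djb,p}^2\psi_p(\theta)$ appearing in the statement.

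The last ingredient is to pass from finite-dimensional to functional convergence via Wichura's theorem: truncate the integral defining $A_n$ to $[1/m,m]$, obtain joint weak convergence of the truncated $A_n$ together with $B_n$ by the continuous mapping theorem, and then let $m\to\infty$ after controlling the outer tails in probability. I expect this tail control to be the main technical obstacle. Near infinity, the $(2+\delta)/p$-moment condition~\ref{cond2}(ii) supplies the uniform integrability needed both to handle $B_n$ and to kill the large-$x$ contribution to $A_n$; near zero, when $p>1$, the singularity $x^{1/p-1}$ must be balanced against the mass $\hat H_{k_n}$ places near the origin, but this singularity is mild enough that no analogue of Condition~\ref{cond_CFG} is required. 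The explicit variance formula is then obtained by a direct computation of $\Var(A+B)$ from the covariance kernel of the limiting Gaussian vector $(e(x),B)$, which requires only the two moment identities above and standard manipulations with the Beta function.
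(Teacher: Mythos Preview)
Your proposal is correct and follows essentially the same route as the paper: the same $A_n+B_n+C_n$ decomposition, the same representation $A_n=\int_0^\infty W_n(x)\,\mathrm d\hat H_{k_n}(x)$ with $W_n(x)=\sqrt{k_n}\{(x+e_n(x)/\sqrt{k_n})^{1/p}-x^{1/p}\}$, and the same passage to the limit via Wichura's theorem together with a fidis-convergence lemma (the paper's Lemma~\ref{Root_dj_Lem1}) paralleling Lemma~\ref{lem:cfg-fidis}. Your remarks on the tail control and on why Condition~\ref{cond_CFG} is not needed here are also in line with the paper's treatment.
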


\begin{proof}
 Decompose $\sqrt{k_n} \{ \hat{S}_n - 
\varphi_{\rm (R),p}(\theta)\} = A_n + B_n + C_n,$ where 
\[ A_n = \wk \{\hat{S}_n-S_n\}, \ B_n = \wk \{S_n - E[S_n]\} \ \textrm{ and } C_n = \wk \{E[S_n]-\varphi_{\rm (R),p}(\theta)\}.   \]
By Condition \ref{cond3}(iii), the term $C_n$ converges to zero. A straightforward calculation shows that the term $A_n$ can be written as  
\begin{align}
 A_n 
 = \intnu W_n(x) \ \mathrm{d}\hat{H}_{k_n}(x), \nonumber \qquad
 W_n(x) = \wk \left\{ \left[ \frac{e_n(x)}{\wk} + x \right]\ho{1/p} - x\ho{1/p} \right\}.
\end{align}
The asymptotic normality of $A_n+B_n$ can be shown as in the proof of Proposition \ref{prop:cfg_snhat} by an application of Wichura's theorem. Here, Lemma \ref{lem:cfg-fidis} needs to be replaced by Lemma \ref{Root_dj_Lem1}, whose proof is similar but easier and therefore omitted for the sake of brevity.
\end{proof}

\begin{lem} \label{Root_dj_Lem1}
 (a) For any $x_1,\ldots,x_m \in (0,\infty)$, as $ n \to \infty$, 
 \[ (e_n(x_1),\ldots,e_n(x_m),B_n) \wto (e(x_1),\ldots,e(x_m),B) \sim \Nor_{m+1}(0,\Sigma_{m+1}) \]
 with \[ \Sigma_{m+1} = \begin{pmatrix}
                          r(x_1,x_1) & \dots & r(x_1,x_m) & f_p(x_1) \\
                          \vdots & \ddots & \vdots & \vdots \\
                          r(x_m,x_1) & \dots & r(x_m,x_m) & f_p(x_m)\\
                          f_p(x_1) & \dots & f_p(x_m) & v_p(\theta)
                         \end{pmatrix},
 \] where the covariance function $r$ is defined as in Lemma \ref{lem:cfg-fidis} and 
 \begin{align}
  f_p(x) &= \sum_{i=1}\ho{\infty} i \intnu p_2\ho{(x,y\ho{p})}(i,0) \I(x \geq y\ho{p}) \ \mathrm{d}y - x \varphi_{\rm (R),p}(\theta),
  \nonumber\\
  v_p(\theta) &= \theta\ho{\frac{-2}{p}} \left\{ \Gamma(1+2/p)-\Gamma(1+1/p)\ho{2} \right\}. \nonumber
 \end{align}
 
(b) For any $x_1,\ldots,x_m \in (0,\infty)$, as $ n \to \infty$,
\[ (W_n(x_1),\ldots,W_n(x_m),B_n) \wto \big(e(x_1)x_1\ho{\frac{1}{p}-1}p\ho{-1},\ldots,e(x_m)x_m\ho{\frac{1}{p}-1}p\ho{-1},B\big). \]
\end{lem}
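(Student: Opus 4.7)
My plan is to establish part (a) via a Cramér--Wold reduction combined with the standard big-blocks/small-blocks CLT for $\alpha$-mixing block sums, and to deduce part (b) from part (a) through a first-order Taylor expansion. For fixed coefficients $\lambda_1, \ldots, \lambda_m, \mu \in \R$, I would write the linear combination
\[
L_n \;=\; \sum_{j=1}^m \lambda_j\, e_n(x_j) + \mu\, B_n \;=\; \frac{1}{\sqrt{k_n}} \sum_{i=1}^{k_n} T_{ni},
\]
where $T_{ni}$ aggregates, over the $i$-th disjoint block $I_i = \{(i-1)b_n+1,\ldots,ib_n\}$, the centered exceedance counts $\sum_{s \in I_i}\{\I(U_s > 1-x_j/b_n) - x_j/b_n\}$ weighted by $\lambda_j$, together with $\mu\{Z_{ni}^{1/p} - E[Z_{ni}^{1/p}]\}$. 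By stationarity the $T_{ni}$ are identically distributed and each is measurable with respect to a single block.

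The CLT for $k_n^{-1/2}\sum_i T_{ni}$ proceeds by the usual big-blocks/small-blocks scheme: under the $\alpha$-mixing decay in Condition~\ref{cond:BerBuc}(iii), replace the sum by one over (nearly) independent big-block copies, with small-block remainders of order $o_P(\sqrt{k_n})$ controlled via Condition~\ref{cond2}(ii), and verify the Lindeberg condition using the uniform $(2+\delta)$-moment bounds from Conditions~\ref{cond:BerBuc}(ii) and \ref{cond2}(ii). The asymptotic variance of $L_n$ then decomposes into three bilinear pieces: the limiting covariances $r(x_j, x_k)$ for the tail empirical process (standard, see, e.g., \cite{Dre00}); the variance $v_p(\theta) = \theta^{-2/p}\{\Gamma(1+2/p) - \Gamma(1+1/p)^2\}$ of $B_n$, which follows from Conditions~\ref{cond2}(ii) and \ref{cond3}(iii) combined with the near-independence of blocks; and the cross-covariance $f_p(x_j)$.

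The principal obstacle is identifying $f_p(x)$. My approach is to use the layer-cake representation
\[
Z_{n1}^{1/p} \;=\; \int_0^\infty \I(Z_{n1} > y^p)\diff y \;=\; \int_0^{b_n^{1/p}} \I\big(N_{b_n}^{(y^p)} = 0\big)\diff y,
\]
where $N_{b_n}^{(x)} := \sum_{s \in I_1} \I(U_s > 1-x/b_n)$, so that the single-block cross-covariance equals $\int_0^\infty E\big[\{N_{b_n}^{(x)} - x\}\I(N_{b_n}^{(y^p)}=0)\big]\diff y$. I would then split at $y^p = x$: for $y^p > x$ the monotonicity $N_{b_n}^{(y^p)} \ge N_{b_n}^{(x)}$ collapses the integrand to $-x\,\Prob(N_{b_n}^{(y^p)}=0) \to -x\,e^{-\theta y^p}$; for $y^p \le x$ the two-level point-process convergence in Condition~\ref{cond:BerBuc}(i) identifies $\lim_n \Prob(N_{b_n}^{(x)}=i, N_{b_n}^{(y^p)}=0) =: p_2^{(x, y^p)}(i,0)$. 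Passing to the limit with dominated convergence (justified by Condition~\ref{cond2}(ii)) and combining the two regimes, the $-x\,e^{-\theta y^p}$ contribution integrated over all $y \geq 0$ yields exactly $-x\,\varphi_{(R),p}(\theta)$, and the remaining piece on $\{y^p \le x\}$ produces $\sum_{i \ge 1} i \int_0^\infty p_2^{(x, y^p)}(i, 0)\I(x \ge y^p)\diff y$, matching the claimed formula.

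Part (b) is then obtained from part (a) through a Taylor expansion. Since each $x_j > 0$ is fixed and $e_n(x_j) = O_P(1)$,
\[
W_n(x_j) \;=\; \sqrt{k_n}\Big\{\big(x_j + e_n(x_j)/\sqrt{k_n}\big)^{1/p} - x_j^{1/p}\Big\} \;=\; \tfrac{1}{p}\, x_j^{1/p-1}\, e_n(x_j) + o_P(1),
\]
uniformly over the finite set $j = 1, \dots, m$. The claimed joint weak limit in part (b) then follows from the joint limit in part (a) together with Slutsky's lemma.
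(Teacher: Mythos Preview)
Your proposal is correct and follows essentially the same route the paper intends (it explicitly refers to the CFG analogue, Lemma~\ref{lem:cfg-fidis}, and omits details as ``similar but easier''): Cram\'er--Wold, big-block/small-block reduction to an i.i.d.\ Ljapunov CLT, and identification of the three limiting covariance blocks; your layer-cake computation of $f_p(x)$ is exactly the $p$th-root version of the paper's integral representation of $E[N_n^{(x)}(E)\log Z_{1:n}]$, simplified since $Z_{n1}^{1/p}\ge 0$ removes the negative half-line. Two small points to tighten: (i) Condition~\ref{cond:BerBuc}(iii) controls only the truncated coefficients $\alpha_{c_2}(\ell)$, so before invoking mixing you need the truncation step via Condition~\ref{cond:BerBuc}(v) (the event $\{\min_i N_{ni}>1-\varepsilon\}$) to make the block summands $\mathcal B^\varepsilon$-measurable, as in the paper's proof of Lemma~\ref{lem:cfg-fidis}; (ii) Condition~\ref{cond3}(iii) is not needed for $v_p(\theta)$---Condition~\ref{cond2}(ii) alone gives the uniform integrability that yields $\Var(Z_{n1}^{1/p})\to\Var(\xi^{1/p})$.
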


\begin{prop} \label{Prop_Root_sl}
 Under Condition \ref{cond:BerBuc}, \ref{cond2}(ii) and \ref{cond3}(iii), we have
 \[ \sqrt{k_n} \{ \hat S_n^{\slb} - \varphi_{\rm (R),p}( \theta) \} \wto \Nor(0,\sigma_{\slb,p}^2 \psi_p(\theta) ) \textrm{ as } n \to \infty,\] where $\psi_p(\theta) = \Gamma(1+1/p)^2p^{-2} \theta^{-(2+2/p)}$.
\end{prop}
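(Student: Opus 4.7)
The plan is to mirror the transition from Proposition \ref{Prop_Root_dj} to its sliding blocks counterpart exactly as was done in passing from Proposition \ref{prop:cfg_snhat} to Proposition \ref{prop:cfg_snhat2} for the CFG-estimator, and from Proposition \ref{Prop_Mad_dj} to Proposition \ref{Prop_Mad_sl} for the madogram-estimator. Specifically, I would first decompose
\[
\sqrt{k_n}\{\hat S_n^{\slb} - \varphi_{(\mathrm R),p}(\theta)\} = A_n^{\slb} + B_n^{\slb} + C_n^{\slb},
\]
where $A_n^{\slb} = \sqrt{k_n}\{\hat S_n^{\slb} - S_n^{\slb}\}$, $B_n^{\slb} = \sqrt{k_n}\{S_n^{\slb} - \Exp[S_n^{\slb}]\}$, and $C_n^{\slb} = \sqrt{k_n}\{\Exp[S_n^{\slb}] - \varphi_{(\mathrm R),p}(\theta)\}$. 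Condition \ref{cond3}(iii) together with stationarity gives $C_n^{\slb} = o(1)$.

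Next, I would rewrite $A_n^{\slb}$ in the same functional form as in the disjoint case. Since $1 - \hat N_{nt}^{\slb} = \sqrt{k_n}\, e_n(Z_{nt}^{\slb})/n + Z_{nt}^{\slb}/b_n$ by the same calculation that produced \eqref{eqnhut}, one obtains
\[
A_n^{\slb} = \int_0^\infty W_n(x)\,\mathrm{d}\hat H_n^{\slb}(x) + o(1),
\qquad
W_n(x) = \sqrt{k_n}\Big\{\Big[\tfrac{e_n(x)}{\sqrt{k_n}} + x\Big]^{1/p} - x^{1/p}\Big\},
\]
where $\hat H_n^{\slb}$ is the empirical c.d.f.\ of the sliding-blocks sample $Z_{n1}^{\slb},\dots,Z_{n,n-b_n+1}^{\slb}$. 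The sum $A_n^{\slb} + B_n^{\slb}$ is then treated by Wichura's theorem, exactly as in the proof of Proposition \ref{prop:cfg_snhat2}. The integrand $W_n(x)$ converges pointwise to $e(x)x^{1/p-1}/p$, and tightness near $x=0$ and at $x=\infty$ is handled via Condition \ref{cond2}(ii) exactly as in the disjoint blocks case.

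The remaining ingredient is a sliding blocks analogue of Lemma \ref{Root_dj_Lem1}: joint weak convergence of $(e_n(x_1),\dots,e_n(x_m),B_n^{\slb})$ to a centered Gaussian vector whose covariance structure coincides with $\Sigma_{m+1}$ of Lemma \ref{Root_dj_Lem1} in the upper-left $m\times m$ block and in the cross covariances $f_p(x_j)$, but where the variance of $B^{\slb}$ is computed under the sliding blocks scheme. The cross covariances remain $f_p(x_j)$ because those only depend on the one-block marginal of the two-level point process, which is the same for sliding and disjoint schemes. I expect the main obstacle to lie precisely in evaluating this sliding-blocks variance: one must compute
\[
v_p^{\slb}(\theta) = \lim_{n\to\infty} \Var(B_n^{\slb}) = \lim_{n\to\infty} \frac{k_n}{(n-b_n+1)^2}\sum_{s,t=1}^{n-b_n+1} \Cov\big((Z_{ns}^{\slb})^{1/p},(Z_{nt}^{\slb})^{1/p}\big),
\]
which splits into a diagonal part matching $v_p(\theta)$ and an off-diagonal part indexed by the overlap $|s-t|<b_n$ between two sliding windows. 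The latter is analysed by conditioning on the location of the maximum in the overlap and applying the convergence of the two-level point process guaranteed by Condition \ref{cond:BerBuc}(i); a change of variables rescales the overlap index to $u = |s-t|/b_n \in (0,1)$, and a calculation with the distribution $\pi_2^{(\cdot)}$ together with the identity $\Exp[(\xi^{1/p} - \Exp \xi^{1/p})\mathbf 1(\xi\ge x)]=$ a multiple of $\Gamma(1/p,\theta x)$ produces the integral $\int_0^\infty(1-e^{-z})z^{1/p-2}\Gamma(1/p,z)\,\mathrm dz$ appearing in $\sigma_{\slb,p}^2$. Tightness of $B_n^{\slb}$ and asymptotic normality of its finite-dimensional distributions follow from Condition \ref{cond:BerBuc}(iii) by the same big-block/small-block argument used in \cite{BerBuc18}, now applied to the function $x\mapsto x^{1/p}$. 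Combining these ingredients via the delta-method with $(\varphi_{(\mathrm R),p}^{-1})'(\varphi_{(\mathrm R),p}(\theta)) = -\theta^{1+1/p}/(p\,\Gamma(1+1/p))$ yields the stated limiting variance $\sigma_{\slb,p}^2\,\psi_p(\theta)$.
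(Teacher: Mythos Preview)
Your proposal is correct and follows the paper's approach essentially line for line: the same $A_n^{\slb}+B_n^{\slb}+C_n^{\slb}$ decomposition, the same integral representation of $A_n^{\slb}$ against $\hat H_n^{\slb}$, the same Wichura-type argument, and the same sliding-blocks analogue of Lemma~\ref{Root_dj_Lem1} with only the $(m{+}1,m{+}1)$ entry changed to $v_p^{\slb}(\theta)$ (the paper states this as Lemma~\ref{RootSlLem1} and omits its proof, whereas you sketch how the incomplete-gamma integral arises from the overlap parametrisation, which is a welcome addition). One small correction: the delta method is not part of \emph{this} proposition---the limiting variance $\sigma_{\slb,p}^2\,\psi_p(\theta)$ is obtained directly from the covariance calculation for $A_n^{\slb}+B_n^{\slb}$, and the delta method only enters afterwards in the proof of Theorem~\ref{theo:root} when passing from $\hat S_n^{\slb}$ to $\hat\theta^{z_n}_{\slb,\mathrm R,p}=\varphi_{(\mathrm R),p}^{-1}(\hat S_n^{\slb})$.
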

\begin{proof}
The proof is similar to the proof of Proposition \ref{Prop_Root_dj}. Write $\sqrt{k_n} \{ \hat{S}_n^{\slb} - 
\varphi_{\rm (R),p}(\theta)\} = A_n^{\slb} + B_n^{\slb} + C_n^{\slb}$, where 
\[ A_n^{\slb} = \wk \{\hat{S}_n^{\slb}-S_n^{\slb}\}, \ B_n^{\slb} = \wk \{S_n^{\slb} - E[S_n^{\slb}]\} \ \textrm{ and } C_n^{\slb} = \wk \{E[S_n^{\slb}]-\varphi_{\rm (R),p}(\theta)\}.   \]
By Condition \ref{cond3}(iii), $C_n^{\slb} = o(1)$, and a straightforward calculation yields \[A_n\ho{\slb} = \intnu W_n(x) \ \mathrm{d}\hat{H}_{n}\ho{\slb}(x),\] where $\hat H_{n}^{\slb}$ denotes the empirical c.d.f. of $Z_{n1}^{\slb},\ldots,Z_{n,n-b_n+1}^{\slb}$. The sum $A_n^{\slb}+B_n^{\slb}$ can be treated as in the proof of Proposition \ref{prop:cfg_snhat2}, where the main result, Lemma \ref{lem:cfg-fidis2}, needs to be replaced by Lemma \ref{RootSlLem1}, whose proof is omitted for the sake of brevity. 
\end{proof}

\begin{lem} \label{RootSlLem1}
 (a) For any $x_1,\ldots,x_m \in (0,\infty)$, as $ n \to \infty$, 
 \[ (e_n(x_1),\ldots,e_n(x_m),B_n^{\slb}) \wto (e(x_1),\ldots,e(x_m),B^{\slb}) \sim \Nor_{m+1}(0,\Sigma_{m+1}^{\slb}), \]
 where all entries of $\Sigma_{m+1}^{\slb}$ are the same as those of $\Sigma_{m+1}$ in Lemma \ref{Root_dj_Lem1} except for the entry at position $(m+1,m+1)$, which needs to be replaced by
 \begin{align*}
v_p^{\slb}(\theta) &= 4p^{-2}\theta^{-2/p} \intnu (1-e^{-z})z^{1/p-2} \Gamma(1/p,z) \ \mathrm{d}z - 2 \theta^{-2/p} \Gamma(1+1/p)^2.
 \end{align*}
 
(b) For any $x_1,\ldots,x_m \in (0,\infty)$, as $ n \to \infty$,
\[ (W_n(x_1),\ldots,W_n(x_m),B_n^{\slb}) \wto \big(e(x_1)x_1\ho{\frac{1}{p}-1}p\ho{-1},\ldots,e(x_m)x_m\ho{\frac{1}{p}-1}p\ho{-1},B^{\slb} \big). \]
\end{lem}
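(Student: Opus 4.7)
The plan is to prove (a) by the Cram\'er--Wold device combined with a standard big-block/small-block decomposition, and to derive (b) from (a) via a one-term Taylor expansion.

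For (a), fix $\lambda_1,\dots,\lambda_{m+1}\in\R$ and consider
$$L_n=\sum_{j=1}^m\lambda_j e_n(x_j)+\lambda_{m+1}B_n^{\slb}.$$
Writing each summand as a sum over $s\in\{1,\dots,n\}$ of stationary random variables depending on a block around position $s$, one obtains a representation $L_n=\sum_{s=1}^n\eta_{n,s}+o_\Prob(1)$. Asymptotic normality then follows from a big-block/small-block CLT under the $\alpha$-mixing Condition~\ref{cond:BerBuc}(iii), in exactly the same way as the analogous step in the proof of Lemma~\ref{Root_dj_Lem1}. The only novelty lies in the limit of the covariance matrix.

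The off-diagonal entries of $\Sigma_{m+1}^{\slb}$ coincide with those of $\Sigma_{m+1}$ from the disjoint case. Indeed, by stationarity,
$$\Cov\bigl(e_n(x_j),B_n^{\slb}\bigr)=\frac{1}{n-b_n+1}\sum_{s=1}^n\sum_{t=1}^{n-b_n+1}\Cov\!\bigl(\I(U_s>1-x_j/b_n),Z_{nt}^{\slb,1/p}\bigr),$$
and the effective range of $t$ that contributes non-negligibly is an interval of length $\sim b_n$ around $s$, while the sum over $s$ contributes a factor of $n$; the $b_n/(n-b_n+1)$ normalization then yields exactly the same single-block cross-moment $f_p(x_j)$ as in the disjoint setting. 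The remaining task is to compute the diagonal entry
$$v_p^{\slb}(\theta)=\lim_{n\to\infty}\Var(B_n^{\slb})=\lim_{n\to\infty}\frac{k_n}{(n-b_n+1)^2}\sum_{s,t}\Cov\!\bigl(Z_{ns}^{\slb,1/p},Z_{nt}^{\slb,1/p}\bigr).$$
Pairs of blocks separated by more than $b_n$ contribute negligibly by mixing, so only time-lags $h\in\{0,1,\dots,b_n-1\}$ matter. Setting $h=\lfloor zb_n\rfloor$ and using Condition~\ref{cond:BerBuc}(i), the joint law of $(Z_{n1}^{\slb}/b_n,\, Z_{n,1+h}^{\slb}/b_n)$ converges to the joint law of maxima over three disjoint intervals of relative lengths $z,1-z,z$ governed by the two-level point process. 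Integrating the resulting limiting covariance of $Z_{n1}^{\slb,1/p}$ against $\diff z$, using the representation $x^{1/p}=p^{-1}\int_0^\infty\I(x>t)t^{1/p-1}\diff t$ to linearize the $1/p$-moment, and performing the substitution $t=zs$ followed by an integration by parts, yields the announced expression involving $\int_0^\infty(1-e^{-z})z^{1/p-2}\Gamma(1/p,z)\diff z$; subtracting the squared limiting mean $\theta^{-2/p}\Gamma(1+1/p)^2$ produces the form of $v_p^{\slb}(\theta)$ stated in the lemma.

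For (b), since $e_n(x)/\sqrt{k_n}=o_\Prob(1)$ for each fixed $x>0$, a one-term Taylor expansion of $u\mapsto u^{1/p}$ around $x$ gives
$$W_n(x)=\sqrt{k_n}\bigl\{(e_n(x)/\sqrt{k_n}+x)^{1/p}-x^{1/p}\bigr\}=\frac{1}{p}\,x^{1/p-1}e_n(x)+o_\Prob(1),$$
and the claim follows from part (a) and the continuous mapping theorem applied to the linear map $(y_1,\dots,y_m,b)\mapsto(p^{-1}x_1^{1/p-1}y_1,\dots,p^{-1}x_m^{1/p-1}y_m,b)$.

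The main obstacle will be the explicit calculation of $v_p^{\slb}(\theta)$. The big-block/small-block CLT and the reduction to single-block covariances are routine, but identifying the limiting joint law of overlapping block maxima in terms of the two-level point process and then reducing the resulting double integral to the stated incomplete-gamma form requires a careful change of variables and integration by parts; this is the step that carries all the model-specific content of the lemma.
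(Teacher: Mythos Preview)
Your approach is essentially the paper's template: the paper omits this proof and refers to the CFG analogue (Lemma~\ref{lem:cfg-fidis2}), which proceeds exactly via Cram\'er--Wold plus a ``blocking of blocks'' big-block/small-block argument, followed by explicit covariance computations. Your sketch for $v_p^{\slb}(\theta)$ via the Hoeffding representation $x^{1/p}=p^{-1}\int_0^\infty \I(x>t)t^{1/p-1}\diff t$ and the joint limit law of two overlapping block maxima is precisely what the paper's Lemma~\ref{lem:varcfg1} does for the CFG case (where the limiting joint survival function is $G_\xi(s,t)=\exp\{-\theta[\xi(s\wedge t)+(s\vee t)]\}$); swapping the order of integration indeed produces the incomplete-gamma term, and the $e^{-v}$-piece gives the $-2\theta^{-2/p}\Gamma(1+1/p)^2$ correction by symmetry. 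Part (b) via a one-term Taylor expansion is the extended continuous mapping argument of Lemma~\ref{lem:cfg-process}.

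One point is too informal: your justification that the cross-covariances $\lim_n\Cov(e_n(x_j),B_n^{\slb})$ equal the disjoint-block values $f_p(x_j)$ is not merely a counting argument. In the CFG case this is the content of Lemma~\ref{lem:cov-cfg1}: one first reduces to $b_n^{-1}\Cov(A_2,D_1+D_2)$, writes this as $\int_0^1 h_{\slb,x}(\xi)\diff\xi$ over the overlap fraction~$\xi$, and then must \emph{compute} that $2\int_0^1 h_{\slb,x}(\xi)\diff\xi$ collapses to the disjoint-block quantity --- a nontrivial identity (part~(b) of that lemma). The Root-version of this identity follows from the same calculation (essentially Lemma~B.3 in \cite{BerBuc18}), but it is not automatic from the normalization alone; you should invoke the analogue explicitly rather than assert it.
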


\section{Auxiliary results for the proof of Theorem~\ref{theo:cfg}}

\subsection{Auxiliary lemmas -- Disjoint blocks}
\label{subsec:pdb}

Throughout this section, we assume that Condition \ref{cond:BerBuc}, \ref{cond2}(i) and \ref{cond3}(i) are met.
	
	\begin{lem} \label{lem:cfg-fidis}
	For any $x_1,\ldots,x_m \in [0,\infty)$ and $m \in \N$, we have
	 \[
	 (e_n(x_1),\ldots,e_n(x_m),B_n)' 
			\wto (e(x_1),\ldots,e(x_m),B)', 
			\] where $ (e(x_1),\ldots,e(x_m),B)' \sim \Nor_{m+1} (0,\Sigma_{m+1})$ with 
			\[
			\Sigma_{m+1}= 
			\begin{pmatrix}
			r(x_1,x_1) & \dots & r(x_1,x_m) & f(x_1) \\
			\vdots & \ddots& \vdots & \vdots \\
			r(x_m,x_1) & \dots & r(x_m,x_m) & f(x_m) \\
			f(x_1) & \dots & f(x_m) & \pi\ho{2}/6
			\end{pmatrix}.
			\] 
			Here, $r(0,0)=0$ and, for $x \geq y \geq 0$ with $x \neq 0$,
			\begin{align*}
			r(x,y) &= \theta x \sum_{i=1}\ho{\infty} \sum_{j=0}\ho{i} ij \pi_2\ho{(y/x)}(i,j),  \qquad
			f(x) = h(x)-x\varphi_{(\rm C)}(\theta), \\
			h(x) &= \sum_{i=1}\ho{\infty} i \ \left[ \int_{0}\ho{\infty} \I(e\ho{y} \leq x) 
			p_2\ho{(x,e\ho{y})}(i,0) \ \mathrm{d}y - \int_{-\infty}\ho{0} p\ho{(x)}(i) -
			\I(e\ho{y} \leq x)   p_2\ho{(x,e\ho{y})} (i,0)  \ \mathrm{d}y \right] \nonumber
			\end{align*}
			and where, for $i \geq j \geq 0,\ i \geq 1$,
			\[ 
			p_2\ho{(x,y)}(i,j)= \Pro\big(\bm N_E\ho{(x,y)}=(i,j)\big), \ \ \ \bm N_E\ho{(x,y)} = \sum_{i=1}\ho{\eta}
			\big(\xi_{i1}\ho{(y/x)}, \xi_{i2}\ho{(y/x)}\big) 
			\]
			with $\eta \sim \mathrm{Poisson}(\theta x)$ independent of i.i.d.\ random vectors 
			$\big(\xi_{i1}\ho{(y/x)}, \xi_{i2}\ho{(y/x)}\big) \sim \pi_2\ho{(y/x)}, i \in \N$ and 
			\[ p\ho{(x)}(i) = \Pro(N_E\ho{(x)} = i), \ \ \ N_E\ho{(x)} = \sum_{i=1}\ho{\eta_2} \xi_i\]
			with $\eta_2 \sim \mathrm{Poisson}(\theta x)$ independent of i.i.d.\ random variables $\xi_i \sim \pi,\ i \in \N$. 		
	\end{lem}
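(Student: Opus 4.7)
The natural route is the Cram\'er--Wold device: for arbitrary $\lambda=(\lambda_0,\ldots,\lambda_m)\in\R^{m+1}$, establish the univariate CLT
\[
L_n := \lambda_0 B_n + \sum_{j=1}^m \lambda_j e_n(x_j) \wto \Nor(0, \lambda^\top \Sigma_{m+1}\lambda).
\]
Writing $N_i^{(x)} = \sum_{s\in I_i} \I(U_s > 1-x/b_n)$ for the exceedance count in the $i$-th block, both quantities have block-sum form, $e_n(x) = k_n^{-1/2}\sum_{i=1}^{k_n}(N_i^{(x)}-x)$ and $B_n = k_n^{-1/2}\sum_{i=1}^{k_n}(\log Z_{ni}-\Exp[\log Z_{ni}])$, so $L_n = k_n^{-1/2}\sum_{i=1}^{k_n} T_{ni}$ for stationary block functionals $T_{ni}$ of $(U_s)_{s\in I_i}$. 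The CLT for $L_n$ would follow from a Bernstein big-blocks/small-blocks argument that groups the $k_n$ blocks into larger super-blocks, using the mixing rate in Condition~\ref{cond:BerBuc}(iii) to decouple consecutive super-blocks, and the $(2+\delta)$-moment control on $\log Z_{n1}$ from Condition~\ref{cond2}(i) together with the trivial bound $N_i^{(x)}\le b_n$ to verify a Lindeberg condition. To handle the unboundedness of $\log Z_{ni}$, one first proves the CLT for a truncated version with $\log Z_{ni}$ replaced by $(\log Z_{ni})\I(M^{-1}\le Z_{ni}\le M)$, then passes to the limit $M\to\infty$ using uniform integrability of $\log^2 Z_{n1}$.

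The heart of the argument is the identification of $\Sigma_{m+1}$. Cross-block covariances are asymptotically negligible by mixing, so only the single-block covariance matrix of $T_{n1}$ matters, and its entries split into three types. The exceedance--exceedance entries follow from the two-level point process convergence in Condition~\ref{cond:BerBuc}(i): direct moment computations in the compound Poisson representation $\bm N_E^{(x_j,x_l)}=\sum_{\ell=1}^{\eta}(\xi_{\ell 1}^{(z)},\xi_{\ell 2}^{(z)})$, $\eta\sim\mathrm{Poisson}(\theta x_j)$, $z=x_l/x_j$ for $x_j\ge x_l$, yield $r(x_j,x_l)=\theta x_j\,\Exp[\xi_1^{(z)}\xi_2^{(z)}]$ (the contribution of the cross-Poisson means cancels with $\Exp[N_E^{(x_j)}]\Exp[N_E^{(x_l)}]$ thanks to $\Exp[\xi_2^{(z)}]=z/\theta$). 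The bottom-right entry is $\lim_n\Var(\log Z_{ni})=\Var(\log\xi)=\pi^2/6$ for $\xi\sim\Ed(\theta)$, which follows from $Z_{n1}\wto\xi$ combined with uniform integrability from Condition~\ref{cond2}(i). The cross entries $f(x_j)$ come from the level-set representation
\[
\log z = \int_0^\infty \I(z > e^y)\,\diff y - \int_{-\infty}^0 \I(z\le e^y)\,\diff y,\qquad z>0,
\]
and the identity $\I(Z_{ni}>e^y)=\I(N_i^{(e^y)}=0)$: Fubini converts the covariance into $\int_{-\infty}^{\infty} \Cov\bigl(N_i^{(x_j)},\I(N_i^{(e^y)}=0)\bigr)\,\diff y$, and weak convergence of $(N_i^{(x_j)},N_i^{(e^y)})$ to $\bm N_E^{(x_j,e^y)}$ combined with dominated convergence yields the stated formula. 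The indicator $\I(e^y\le x_j)$ appears in $h(x_j)$ because $\{N_E^{(e^y)}=0\}$ forces $\{N_E^{(x_j)}=0\}$ whenever $e^y>x_j$.

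The hardest step will be precisely this interchange of limit and integration in the identification of $f(x_j)$, since the representation of $\log$ is improper at both $y\to-\infty$ and $y\to+\infty$. For $y\to-\infty$ the bound $\Prob(N_i^{(e^y)}\ge 1)\le e^y$ combined with Cauchy--Schwarz and a $\sqrt{\Var(N_i^{(x_j)})}=O(1)$ estimate produces an integrable majorant of order $e^{y/2}$; for $y\to+\infty$, the bound $\Prob(N_i^{(e^y)}=0)\le\exp(-c\,e^y)$ (obtainable from the proof of the point-process convergence) provides a super-exponentially decaying majorant; in the intermediate region, the $(2+\delta)$-moment condition \ref{cond2}(i) together with H\"older's inequality gives the uniform-in-$n$ control needed to legitimise dominated convergence. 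Once these majorants are in place, identification of $f(x_j)$ and hence of the full matrix $\Sigma_{m+1}$ is completed, and combining with the CLT from the first paragraph yields the lemma.
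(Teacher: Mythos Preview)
Your overall strategy---Cram\'er--Wold reduction, a block CLT under mixing, and variance identification via the level-set representation of $\log$---matches the paper's. Two technical steps, however, would fail as written.

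First, Condition~\ref{cond:BerBuc}(iii) bounds only $\alpha_{c_2}(\ell)$ for some fixed $c_2\in(0,1)$, i.e., mixing of the sigma-algebras $\mathcal B^\varepsilon$ generated by the \emph{truncated} variables $U_s^\varepsilon=U_s\I(U_s>1-\varepsilon)$. Your block functionals involve $\log Z_{ni}$, which depends on the untruncated maximum $N_{ni}$ and is therefore not $\mathcal B^\varepsilon$-measurable; no decoupling bound is available for it. The paper resolves this by working on the event $A_n^+=\{\min_i N_{ni}^+>1-\varepsilon\}$, which has probability tending to one by Condition~\ref{cond:BerBuc}(v) and on which $Z_{ni}^+$ becomes a function of the $U_s^\varepsilon$ alone. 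Your truncation $\I(M^{-1}\le Z_{ni}\le M)$ would incidentally restore this measurability for $n$ large, but your stated motivation (unboundedness of $\log$) misses the real issue; the argument requires measurability to be made explicit. The paper also does not form super-blocks: it shrinks each $I_i$ by the sequence $\ell_n$ supplied in Condition~\ref{cond:BerBuc}(iii), obtaining $\ell_n$-separated, $\mathcal B^\varepsilon$-measurable summands for which $k_n\alpha_{c_2}(\ell_n)=o(1)$ is already given.

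Second, the bound $N_i^{(x)}\le b_n$ is useless for Lindeberg or Ljapunov: Condition~\ref{cond:BerBuc}(iii) forces $k_n=o(b_n^2)$, so eventually $b_n>\sqrt{k_n}$. What is needed is the uniform $(2+\delta)$-moment bound on $N_n^{(x)}(E)$ from Condition~\ref{cond:BerBuc}(ii), which together with Condition~\ref{cond2}(i) and Minkowski's inequality gives $\sup_n\Exp|g_{1,n}|^p<\infty$ for $p\in(2,2+\delta)$; this is exactly the paper's Ljapunov check. Finally, your claimed uniform bound $\Prob(N_i^{(e^y)}=0)\le\exp(-c\,e^y)$ for $y\to+\infty$ is not obtainable from the conditions. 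The paper avoids any dominated-convergence argument inside the integral: it passes the full expectation $\Exp[N_n^{(x)}(E)\log Z_{1:n}]$ to the limit in one step via uniform integrability of the product, which follows from H\"older's inequality applied to the $(2+\delta)$-moment bounds on each factor.
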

	
	\begin{lem}\label{lem:cfg-process}
		For any $m \in \N$, we have
		\[ 
		\left\{ (W_n(x),B_n)' \right\}_{x \in [1/m,m]}  
		\wto \left\{ \left( \frac{e(x)}{x},B \right)' \right\}_{x \in [1/m,m]} \ \textrm{ in } D([1/m,m]) \times \R, 
		\]
		where $(e,B)'$ is a centered Gaussian process with continuous sample paths and with covariance
		functional as specified in Lemma \ref{lem:cfg-fidis}. 
	\end{lem}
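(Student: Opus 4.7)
\textbf{Proof proposal for Lemma \ref{lem:cfg-process}.}
The plan is to reduce process convergence of $(W_n,B_n)$ to process convergence of the already-understood pair $(e_n(\cdot)/\cdot, B_n)$ via a uniform Taylor expansion on $[1/m,m]$, and then to combine fidi convergence from Lemma~\ref{lem:cfg-fidis} with tightness on the compact interval.

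First, I would note that on $[1/m,m]$ the function $x \mapsto 1/x$ is bounded, and by Condition~\ref{cond_gewKonv} together with the continuous mapping theorem, $\sup_{x\in[1/m,m]}|e_n(x)| = O_\Prob(1)$. Writing
\[
W_n(x) = \sqrt{k_n}\,\log\!\left\{1 + \frac{1}{\sqrt{k_n}}\cdot\frac{e_n(x) + 1/\sqrt{k_n}}{x}\right\},
\]
the argument of the logarithm is $1 + u_n(x)$ with $\sup_{x\in[1/m,m]}|u_n(x)| = O_\Prob(k_n^{-1/2})$. A Taylor expansion of $\log(1+u) = u - u^2/2 + O(u^3)$ then yields
\[
W_n(x) = \frac{e_n(x)}{x} + \frac{1}{\sqrt{k_n}\,x} + O_\Prob(k_n^{-1/2}) = \frac{e_n(x)}{x} + o_\Prob(1)
\]
uniformly for $x\in[1/m,m]$. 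Consequently, it suffices to establish the joint process convergence $(e_n(\cdot)/\cdot,B_n)' \wto (e(\cdot)/\cdot,B)'$ in $D([1/m,m])\times\R$.

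For joint convergence, I would verify finite-dimensional convergence and tightness separately. Finite-dimensional convergence of $(e_n(x_1)/x_1,\dots,e_n(x_r)/x_r,B_n)$ for any $x_1,\dots,x_r\in[1/m,m]$ follows immediately from Lemma~\ref{lem:cfg-fidis} applied to the points $(x_1,\dots,x_r)$ (the coordinates are just rescaled by the deterministic, nonzero constants $1/x_j$). As for tightness, since multiplication by the bounded, continuous function $1/x$ is a continuous map $D([1/m,m])\to D([1/m,m])$ (with respect to the sup-norm topology on the limit), it is enough to show tightness of $\{e_n\}$ in $D([1/m,m])$. Condition~\ref{cond_gewKonv} delivers weighted weak convergence of $e_n$ on $[0,1]$, which in particular yields tightness of $e_n$ restricted to $[1/m,1]$. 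On $[1,m]$, tightness follows from the variance bound in Condition~\ref{cond:BerBuc}(iv) and the mixing Condition~\ref{cond:BerBuc}(iii) by standard arguments for tail empirical processes of mixing sequences (see, e.g., the references cited after Condition~\ref{cond_gewKonv} in the main text). The joint tightness with $B_n\in\R$ is automatic, since $B_n$ is asymptotically normal by the CLT-type argument underlying Lemma~\ref{lem:cfg-fidis}, and product spaces inherit tightness coordinatewise.

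The main obstacle I anticipate is ensuring that the process limit $e$ has continuous sample paths so that the sup-norm continuous-mapping argument for $x\mapsto e_n(x)/x$ is rigorous; this is guaranteed by Condition~\ref{cond_gewKonv}, which already stipulates continuity of the limiting Gaussian process. Beyond this, the Taylor expansion in Step~1 must carefully use that $[1/m,m]$ is bounded away from zero so that the $1/(\sqrt{k_n}\,x)$ contribution is uniformly $O(k_n^{-1/2})$—this is the precise reason the statement is restricted to $[1/m,m]$ rather than $[0,\infty)$, and explains why one needs the later truncation arguments in Lemma~\ref{lem:cfg-limsup} to treat the boundary regions.
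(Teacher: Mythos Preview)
Your proposal is correct and follows essentially the same two-step structure as the paper: establish fidi convergence from Lemma~\ref{lem:cfg-fidis}, then argue tightness of $(e_n,B_n)$ on $[1/m,m]$ and transfer to $W_n$. The paper's execution differs slightly: for fidi convergence it applies the extended continuous mapping theorem (Theorem~18.11 in \cite{Van98}) to the sequence of maps $f_n(z)=\sqrt{k_n}\log\{1+k_n^{-1/2}(z/x+1/(\sqrt{k_n}x))\}$ rather than your explicit uniform Taylor expansion, and for tightness it invokes Theorem~4.1 in \cite{Rob09} directly on the full interval $[1/m,m]$, without splitting into $[1/m,1]$ and $[1,m]$. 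Your Taylor-expansion route is arguably cleaner because it simultaneously handles fidis and tightness (once $\sup_x|W_n(x)-e_n(x)/x|=o_\Prob(1)$, everything reduces to $(e_n/\cdot,B_n)$), whereas the paper treats these separately.

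One minor point: you invoke Condition~\ref{cond_gewKonv} to get $\sup_{x\in[1/m,m]}|e_n(x)|=O_\Prob(1)$ and tightness on $[1/m,1]$, but that condition only covers $[0,1]$ and, more importantly, is not assumed in the subsection containing this lemma (the preamble lists only Conditions~\ref{cond:BerBuc}, \ref{cond2}(i), \ref{cond3}(i)). The paper avoids this by citing \cite{Rob09}, Theorem~4.1, which gives weak convergence of $e_n$ in $D([0,m])$ for any $m$ under Condition~\ref{cond:BerBuc} alone; this yields both the $O_\Prob(1)$ bound and tightness on all of $[1/m,m]$ in one stroke, making your interval-splitting unnecessary.
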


		\begin{lem} \label{lem:cfg-EE'}
		For any $m\in\N$, we have
		\[ 
		E_{n,m} = E_{n,m}' + o_\Prob(1)  \quad \text{ as } n \to \infty, 
		\] 
		where 
		$
		E_{n,m}' = \int_{1/m}^{m} W_n(x) \theta e^{-\theta x} \ \mathrm{d}x.
		$ 
	\end{lem}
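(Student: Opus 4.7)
The plan is to carry out two successive reductions. First, a first-order Taylor expansion of $\log(1+u)$ in the definition of $W_n$, combined with the fact that $\sup_{x\in[1/m,m]}|e_n(x)/x|=O_\Prob(1)$ which follows from Lemma~\ref{lem:cfg-process}, would give
\[
\sup_{x\in[1/m,m]} \big| W_n(x) - e_n(x)/x \big| = O_\Prob(k_n^{-1/2}).
\]
As $\mathrm{d}\hat H_{k_n}$ and $\theta e^{-\theta x}\mathrm{d}x$ both have total mass at most one on $[1/m,m]$, this reduces the claim to showing
\[
\Delta_{n,m} := \int_{1/m}^m \frac{e_n(x)}{x}\, \mathrm{d}\big(\hat H_{k_n}(x) - H(x)\big) = o_\Prob(1),
\]
where $H(x) = 1 - e^{-\theta x}$ denotes the $\Ed(\theta)$-c.d.f.

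The second reduction is Lebesgue–Stieltjes integration by parts. Both $e_n(\cdot)/\cdot$ and $\hat H_{k_n}-H$ are right-continuous and of bounded variation on $[1/m,m]$, hence
\begin{align*}
\Delta_{n,m}
&= \Big[ \tfrac{e_n(x)}{x}\{\hat H_{k_n}(x)-H(x)\} \Big]_{1/m}^m
- \int_{1/m}^m \{\hat H_{k_n}-H\}\, \mathrm{d}\big(e_n(\cdot)/\cdot\big)(x) \\
&\quad+ \sum_{x\in[1/m,m]} \Delta\big( e_n(x)/x\big)\,\Delta \hat H_{k_n}(x).
\end{align*}
Each of the three terms is $o_\Prob(1)$ provided
\emph{(a)} $V_{[1/m,m]}(e_n(\cdot)/\cdot) = O_\Prob(k_n^{-1/2})$, and
\emph{(b)} $\sup_{x\in[1/m,m]} |\hat H_{k_n}(x)-H(x)| = o_\Prob(1)$. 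Indeed, the boundary is $O_\Prob(1)\cdot o_\Prob(1)$, the main integral is $o_\Prob(1)\cdot O_\Prob(k_n^{-1/2})$, and the jump term is at most $\max_x \Delta \hat H_{k_n}(x) \cdot V_{[1/m,m]}(e_n/\cdot) = k_n^{-1}\cdot O_\Prob(k_n^{-1/2})$.

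To establish (a), I would write $e_n(x)/x = N_n^{(x)}(E)/(\wk x) - \wk$ and apply the product rule for BV functions: on $[1/m,m]$ the step function $N_n^{(\cdot)}(E)$ has $N_n^{(m)}(E) = O_\Prob(1)$ jumps of size one by the point-process convergence in Condition~\ref{cond:BerBuc}(i), while $1/(\wk x)$ is smooth and bounded with variation $O(k_n^{-1/2})$. For (b), pointwise convergence of $\hat H_{k_n}(x)$ to $H(x)$ in probability follows from $\Exp[\hat H_{k_n}(x)] = \Pro(Z_{n1}\leq x)\to H(x)$, which is relation~\eqref{eq:zb}, together with $\Var(\hat H_{k_n}(x))=O(k_n^{-1})$, the latter obtained via a standard blocks-and-mixing decomposition using Condition~\ref{cond:BerBuc}(iii) (or Condition~\ref{cond:BerBuc}(iv)); monotonicity of $\hat H_{k_n}$ and continuity of $H$ then upgrade this to uniform convergence on $[1/m,m]$.

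The principal obstacle is the careful bookkeeping of common jump points: the atoms of $\hat H_{k_n}$ (located at the $Z_{ni}$) coincide with a subset of the atoms of $N_n^{(\cdot)}(E)$, so $e_n(\cdot)/\cdot$ and $\hat H_{k_n}$ share discontinuities. This is, however, exactly accounted for by the third (jump) term of the integration-by-parts formula above, which is negligible since $\Delta\hat H_{k_n}\leq k_n^{-1}$. A secondary, minor point is to verify uniform Glivenko–Cantelli in (b) under the paper's tail mixing coefficients $\alpha_{c_2}$ rather than standard $\alpha$-mixing, but this is already within the scope of the machinery developed in Section~\ref{sec:cond}.
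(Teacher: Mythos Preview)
Your integration-by-parts strategy contains a genuine error in step~(a). The identity you write, $e_n(x)/x = N_n^{(x)}(E)/(\wk x) - \wk$, is incorrect: the count appearing in $e_n$ is $\sum_{s=1}^n \I(U_s>1-x/b_n)$, which equals $N_n^{(xk_n)}(E)$ since $x/b_n = xk_n/n$, not $N_n^{(x)}(E)$. This count has mean $xk_n$ and hence $O_\Prob(k_n)$ jumps of size one on $[1/m,m]$, not $O_\Prob(1)$. Redoing the product-rule estimate then gives
\[
V_{[1/m,m]}\big(e_n(\cdot)/\cdot\big)=O_\Prob(\sqrt{k_n}),
\]
not $O_\Prob(k_n^{-1/2})$. (This is the familiar phenomenon that the \emph{total variation} of a rescaled empirical process blows up even though its \emph{sup-norm} is tight.) With this correct order, your main integral after integration by parts is only $\sup|\hat H_{k_n}-H|\cdot O_\Prob(\sqrt{k_n})$; even under the CLT rate $\sup|\hat H_{k_n}-H|=O_\Prob(k_n^{-1/2})$ this is merely $O_\Prob(1)$, so the argument does not close.

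The paper avoids this by using the \emph{asymptotic equicontinuity} of $W_n$ rather than its total variation. Lemma~\ref{lem:cfg-process} gives $W_n \Rightarrow e(\cdot)/\cdot$ in $D([1/m,m])$ with a continuous-path limit; combined with the uniform consistency $\sup_{x\in[1/m,m]}|\hat H_{k_n}(x)-H(x)|=o_\Prob(1)$ (taken from the proof of Lemma~9.2 in \cite{BerBuc18}) and a general Helly--Bray-type lemma (Lemma~C.8 in \cite{BerBuc17}), the continuous mapping theorem yields $\int_{1/m}^{m}W_n\,\mathrm d(\hat H_{k_n}-H)=o_\Prob(1)$ directly. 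The Taylor reduction to $e_n(x)/x$ that you perform in your first step is not needed for this route. Your part~(b) is correct and matches the paper.
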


	\begin{lem} \label{lem:cfg-enmbn}
		For any $m \in \N$,  we have
		\[ 
		E_{n,m} + B_n \wto E_m'+B \sim \Nor(0,\tau_m^2) \ \textrm{ as } n \to \infty, 
		\] where, with $r$ and $f$ defined as in Lemma~\ref{lem:cfg-fidis}, 
		\[ 
		\tau_m^2= \theta\ho{2} \int_{1/m}\ho{m} 
		\int_{1/m}\ho{m} r(x,y) \frac{1}{xy} e\ho{-\theta(x+y)} \ \mathrm{d}x \mathrm{d}y + 2 \theta \int_{1/m}\ho{m} f(x) \frac{1}{x} e\ho{-\theta x} \ \mathrm{d}x + \frac{\pi\ho{2}}{6}. \]
	\end{lem}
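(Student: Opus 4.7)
The plan is to combine the reduction provided by Lemma~\ref{lem:cfg-EE'}, the joint functional weak convergence from Lemma~\ref{lem:cfg-process}, and a continuous mapping argument, and then to identify the variance of the limit by a covariance computation based on Lemma~\ref{lem:cfg-fidis}. First, Lemma~\ref{lem:cfg-EE'} gives
\[
E_{n,m} + B_n = E_{n,m}' + B_n + o_\Prob(1), \qquad E_{n,m}' = \int_{1/m}^{m} W_n(x)\, \theta e^{-\theta x}\, \mathrm{d}x,
\]
so by Slutsky's theorem it suffices to derive the weak limit of $E_{n,m}' + B_n$.

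Next, I would appeal to Lemma~\ref{lem:cfg-process}, which yields the joint convergence
\[
\{(W_n(x), B_n)\}_{x\in[1/m,m]} \wto \{(e(x)/x,\, B)\}_{x\in[1/m,m]}
\]
in $D([1/m,m]) \times \R$, with a limit whose first coordinate has continuous sample paths. Since convergence in the Skorohod topology to a continuous limit is equivalent to uniform convergence, and since the functional
\[
(\phi,b) \mapsto \int_{1/m}^{m} \phi(x)\, \theta e^{-\theta x}\, \mathrm{d}x + b
\]
is continuous on $C([1/m,m]) \times \R$ equipped with the uniform metric, the continuous mapping theorem gives $E_{n,m}' + B_n \wto E_m' + B$. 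Being an $L^2$-limit of linear combinations of the centered Gaussian vector $(e(x_1),\ldots,e(x_k),B)$ (whose joint Gaussianity is guaranteed by Lemma~\ref{lem:cfg-fidis}), the limit $E_m'+B$ is centered Gaussian.

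Finally, I would compute its variance by bilinearity of covariance together with Fubini's theorem, which applies because $r$ is bounded on $[1/m,m]^2$, $f$ is bounded on $[1/m,m]$, and the weight $\theta e^{-\theta x}/x$ is bounded on $[1/m,m]$. This produces
\[
\Var(E_m') = \theta^2 \int_{1/m}^{m}\!\!\int_{1/m}^{m} r(x,y)\, \frac{e^{-\theta(x+y)}}{xy}\, \mathrm{d}x\, \mathrm{d}y, \qquad \Cov(E_m', B) = \theta \int_{1/m}^{m} f(x)\, \frac{e^{-\theta x}}{x}\, \mathrm{d}x,
\]
while $\Var(B) = \pi^2/6$ is read directly off the $(m{+}1,m{+}1)$-entry of $\Sigma_{m+1}$ in Lemma~\ref{lem:cfg-fidis}. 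Summing $\Var(E_m') + 2\Cov(E_m',B) + \Var(B)$ produces exactly the stated expression for $\tau_m^2$.

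The main technical point to verify is the continuous mapping step: one needs to note that when the Skorohod limit is continuous, weak convergence in the c\`agl\`ad space $D([1/m,m])$ upgrades to weak convergence in $C([1/m,m])$ under the uniform metric, after which continuity of the integration-against-bounded-weight functional is immediate. Once this is in place, the remaining work reduces to the straightforward covariance accounting above.
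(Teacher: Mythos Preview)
Your proposal is correct and follows essentially the same route as the paper: reduce $E_{n,m}$ to $E_{n,m}'$ via Lemma~\ref{lem:cfg-EE'}, apply the continuous mapping theorem to the joint functional convergence of Lemma~\ref{lem:cfg-process}, and read off $\tau_m^2$ from the covariance structure in Lemma~\ref{lem:cfg-fidis}. Your write-up is in fact more careful than the paper's in justifying the continuous mapping step and the Fubini argument for the variance, but the underlying argument is the same.
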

	\begin{lem} \label{lem:cfg-sigcon}
		As $m \to \infty$, $\tau_m^2 \to \sigma_{\djb, (\rm C)}^2/\theta^2$, where $ \sigma_{\djb, (\rm C)}^2$ is specified in Theorem \ref{theo:cfg}.	
		\end{lem}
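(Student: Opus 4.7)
The plan is to identify $\tau_m^2$ as the variance of the Gaussian random variable $E'_m + B$ (from Lemma~\ref{lem:cfg-enmbn}) and argue that $E'_m \to A := \int_0^\infty \theta e^{-\theta x} e(x)/x \,\mathrm{d}x$ in $L^2$ as $m\to\infty$, so that $\tau_m^2 \to \Var(A+B) =: \tau_\infty^2$. This convergence reduces to checking absolute integrability of the three integrands in the definition of $\tau_m^2$ on $(0,\infty)$ and $(0,\infty)^2$, which follows from the exponential weight $e^{-\theta x}$ together with bounds on $r(x,y)$ (derivable from $r(x,y) = \theta \min(x,y) \Exp[\xi_1^{(y/x \wedge x/y)} \xi_2^{(y/x \wedge x/y)}]$ and symmetry) and a polynomial-in-$x$ bound on $|f(x)|$ coming from the moment hypothesis in Condition~\ref{cond2}(i). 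It therefore suffices to evaluate $\tau_\infty^2$ in closed form and match with $\sigma^2_{\djb, \rm C}/\theta^2$.

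For the double integral, exploit the symmetry $r(x,y) = r(y,x)$ to restrict to $\{y \leq x\}$ (multiplying by $2$), where $r(x,y) = \theta x\, \Exp[\xi_1^{(y/x)} \xi_2^{(y/x)}]$. The substitution $z = y/x$ (so $\mathrm{d}y = x\,\mathrm{d}z$) together with Fubini and the inner identity $\int_0^\infty e^{-\theta x(1+z)}\,\mathrm{d}x = 1/[\theta(1+z)]$ produces
\[
\theta^2 \int_0^\infty \int_0^\infty \frac{r(x,y)}{xy} e^{-\theta(x+y)}\,\mathrm{d}x\,\mathrm{d}y = 2\theta^2 \int_0^1 \frac{\Exp[\xi_1^{(z)} \xi_2^{(z)}]}{z(1+z)}\,\mathrm{d}z,
\]
which already matches the first piece of $\sigma^2_{\djb, \rm C}/\theta^2$.

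For the single integral the key step is to derive an explicit formula for $f$. Using the compound-Poisson structure of $\bm N_E^{(x,u)}$ together with the identity $\Pro(\xi_2^{(z)} > 0) = z$ (which follows from equating $\Pro(\bm N_E^{(x,u)}_2 = 0) = e^{-\theta u}$ with the generating-function expression $\Exp[\Pro(\xi_2^{(u/x)} = 0)^\eta]$ for $\eta \sim \mathrm{Poisson}(\theta x)$), an analogous calculation yields, for $u \leq x$,
\[
\sum_{i\geq 1} i\, p_2^{(x,u)}(i,0) = \theta x e^{-\theta u} \Exp[\xi_1^{(u/x)} \I(\xi_2^{(u/x)} = 0)].
\]
Substituting $u = e^y$ in both pieces of $h(x)$, splitting the cases $x \geq 1$ and $x < 1$ (which, after recombination, yield the same formula), and applying $\int_0^t (1-e^{-v})/v\,\mathrm{d}v = \log t + \gamma + E_1(t)$ with $E_1(t) = \int_t^\infty e^{-v}/v\,\mathrm{d}v$, the $x\log x$ and $x\log\theta$ terms cancel against $\varphi_{(\rm C)}(\theta) = -\log\theta - \gamma$, leaving
\[
f(x) = -x E_1(\theta x) - \theta x \int_0^x e^{-\theta u} \Exp[\xi_1^{(u/x)} \I(\xi_2^{(u/x)} > 0)] \frac{\mathrm{d}u}{u}.
\]
Inserting this into $2\theta \int_0^\infty f(x) e^{-\theta x}/x\,\mathrm{d}x$, the first summand contributes $-2\int_0^\infty E_1(v) e^{-v}\,\mathrm{d}v = -2\log 2$ (by Fubini and Frullani's identity $\int_0^\infty (e^{-v} - e^{-2v})/v\,\mathrm{d}v = \log 2$); the second, after $z = u/x$ and Fubini, contributes $-2\theta \int_0^1 \Exp[\xi_1^{(z)} \I(\xi_2^{(z)} > 0)]/[z(1+z)]\,\mathrm{d}z$. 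Together with the $\pi^2/6$ term this produces exactly $\sigma^2_{\djb, \rm C}/\theta^2$.

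The main obstacle is the derivation of the explicit formula for $f(x)$: the two integrals composing $h(x)$ individually diverge at $u = 0$ (and, for $x \geq 1$, have a separating boundary at $u = 1$), so the recombination must be carried out carefully and the exponential-integral identity must be deployed at the right moment to obtain the clean cancellation with $x\varphi_{(\rm C)}(\theta)$. The sanity check in the serial independence case ($\theta = 1$, $\xi_1^{(z)} = 1$ a.s., $\psi(z) = z$) reduces the double integral to $0$ and the single integral to $-2\log 2$, reproducing $\pi^2/6 - 2\log 2$, in agreement with Example~\ref{ex:indep}.
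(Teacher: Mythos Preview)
Your proof is correct and follows essentially the same route as the paper: both handle the double integral via the substitution $z=y/x$ and the single integral via the compound-Poisson identity $\sum_i i\, p_2^{(x,u)}(i,0) = \theta x e^{-\theta u}\Exp[\xi_1^{(u/x)}\I(\xi_2^{(u/x)}=0)]$ (which the paper quotes as Formula~(A.7) from \cite{BerBuc18}). The one difference is the order of operations for the $f$-integral. The paper keeps the case split $x\le 1$ versus $x>1$ throughout and integrates $\int_0^\infty h(x)e^{-\theta x}/x\,\mathrm{d}x$ directly, producing four double integrals that are then merged by Fubini and reduced via the identity $\int_0^1 \log(z)e^{-\theta z}+e^{-\theta/z}/(\theta z)\,\mathrm{d}z=\varphi_{(\rm C)}(\theta)/\theta$. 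You instead first collapse both cases into the single closed form
\[
f(x) = -xE_1(\theta x) - \theta x\int_0^x \frac{e^{-\theta u}}{u}\,\Exp\big[\xi_1^{(u/x)}\I(\xi_2^{(u/x)}>0)\big]\,\mathrm{d}u,
\]
valid for all $x>0$, and only then integrate. Your ordering makes the cancellation against $x\varphi_{(\rm C)}(\theta)$ explicit at the level of $f$ rather than buried in a later Fubini step, and replaces the paper's ``tedious calculations based on Fubini's theorem'' with the clean Frullani evaluation $\int_0^\infty E_1(v)e^{-v}\,\mathrm{d}v = \log 2$.
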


	\begin{lem} \label{lem:cfg-limsup} 
		If, in addition to Condition \ref{cond:BerBuc}, \ref{cond2}(i) and \ref{cond3}(i), Condition \ref{cond_CFG} holds, then, for all $\delta > 0$, 
		\[ 
		\lim_{m \to \infty} \limsup_{n \to \infty} P \! \left( |E_{n,m}-E_n | > \delta \right) = 0. 
		\]
	\end{lem}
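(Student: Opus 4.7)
The plan is to split
\[
E_n - E_{n,m} = \int_0^{1/m} W_n(x)\,\mathrm d\hat H_{k_n}(x) + \int_m^{\infty} W_n(x)\,\mathrm d\hat H_{k_n}(x) =: L_n^{(m)} + U_n^{(m)}
\]
and to show that each piece is asymptotically negligible in the iterated-limit sense, using separate and rather different arguments for the two sides.

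The upper tail $U_n^{(m)}$ is the easier piece. On the random set of indices with $Z_{ni}\ge m$, Condition~\ref{cond_max} makes the argument $k_n^{-1/2}\{e_n(x)/x+(\sqrt{k_n}x)^{-1}\}$ of the logarithm defining $W_n$ uniformly $o_\Prob(1)$, so a Taylor expansion bounds $|W_n(Z_{ni})|$ by a constant multiple of $|e_n(Z_{ni})/Z_{ni}|+(\sqrt{k_n}Z_{ni})^{-1}$ on that set. Combining this with Cauchy--Schwarz, the variance bound from Condition~\ref{cond:BerBuc}(iv) (to control the $L^2$ norm of $e_n(x)/x$ against $\hat H_{k_n}$), and the exponential tail estimate $\hat H_{k_n}(\infty)-\hat H_{k_n}(m)=O_\Prob(e^{-\theta m})$ following from Condition~\ref{cond3}(i), one obtains a bound on $|U_n^{(m)}|$ that vanishes as $m\to\infty$.

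The lower tail $L_n^{(m)}$ is the main obstacle, because the singular factor $1/x$ in $W_n(x)$ interacts both with a potentially large number of small $Z_{ni}$ and with the breakdown of the linearization of $\log(1+\cdot)$ near $0$. The plan is to split $[0,1/m]$ further at the level $c_n := ck_n^{-\mu}$, with $\mu\in(1/2,1/\{2(1-\tau)\})$ as in Condition~\ref{cond_ewerte}. On the outer piece $[c_n,1/m]$, the weighted weak convergence in Condition~\ref{cond_gewKonv} forces $\sup_{x\in(0,1]}|e_n(x)/x^\tau| = O_\Prob(1)$, which together with the choice of $\mu$ keeps the logarithm's argument uniformly $o_\Prob(1)$, so that a Taylor expansion reduces the contribution to
\[
O_\Prob(1)\int_{c_n}^{1/m}\bigl\{x^{\tau-1}+(\sqrt{k_n}x)^{-1}\bigr\}\,\mathrm d\hat H_{k_n}(x),
\]
which, using that $\hat H_{k_n}$ converges in probability to the $\Ed(\theta)$ distribution function on sets bounded away from $0$, is dominated by a constant multiple of $(1/m)^\tau$ modulo asymptotically vanishing terms.

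The inner piece $[0,c_n]$ is where Condition~\ref{cond_CFG} earns its keep. Exploiting the deterministic two-sided bound $b_n/(n+1)\le\hat Z_{ni}\le b_n$, together with the uniform integrability of $\log^{2+\delta}Z_{n1}$ granted by Condition~\ref{cond2}(i) to handle those few indices where $Z_{ni}$ is even smaller than $1/n$, one obtains a pointwise bound of the form $|W_n(Z_{ni})|=O(\sqrt{k_n}\log n)$ on a set of overwhelming probability. Condition~\ref{cond_ewerte} then provides the comparison
\[
\Prob(Z_{n1}\le c_n) = \Prob(\xi\le c_n) + o(\log(n)^{-1}k_n^{-1/2}) = O(k_n^{-\mu}),
\]
sufficient for $\Exp[\#\{i:Z_{ni}\le c_n\}] = O(k_n^{1-\mu})$, so that a Markov-type argument yields a contribution to $L_n^{(m)}$ of order $\log(n)\,k_n^{1/2-\mu}$. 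This vanishes because $\mu>1/2$ and Condition~\ref{cond:bnkn2} forces $k_n$ to grow at least polynomially in $n$. This delicate balancing of the logarithmic blowup of $W_n$ near $0$ against the rarity of very small block maxima in the lower tail is the technical heart of the argument and the reason why Condition~\ref{cond_CFG} is required for the CFG-type estimator but not for its madogram- or root-type siblings.
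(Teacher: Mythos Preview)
Your proposal follows essentially the same architecture as the paper's proof: split $E_n-E_{n,m}$ into an upper-tail piece ($x\ge m$) and a lower-tail piece ($x\le 1/m$), linearise the logarithm on the region where its argument is uniformly small (via Condition~\ref{cond_max} at the top and Condition~\ref{cond_gewKonv} above the level $k_n^{-\mu}$), and deal with the remaining very-small-$Z$ region by a counting argument driven by Condition~\ref{cond_ewerte}. All the essential ideas are present, and your two-piece split of the lower tail is a legitimate variant of the paper's three-piece split $(0,dk_n^{-1}]\cup(dk_n^{-1},dk_n^{-\mu}]\cup(dk_n^{-\mu},1/m]$.

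Two of your justifications do not quite work as stated. First, on the innermost piece $[0,c_n]$, uniform integrability of $|\log Z_{n1}|^{2+\delta}$ does \emph{not} yield $\max_i|\log Z_{ni}|\lesssim\log n$ with probability tending to~$1$: a union bound with Markov's inequality only gives $\Pro(\exists i:|\log Z_{ni}|>C\log n)=O(k_n/(\log n)^{2+\delta})$, which diverges. The correct observation---which the paper isolates by splitting off the extra piece $(0,dk_n^{-1}]$---is that $k_n\min_i Z_{ni}=Z_{1:n}\wto\xi$, so $\Pro(\min_i Z_{ni}>1/n)=\Pro(Z_{1:n}>1/b_n)\to 1$; on this event the deterministic bound on $\hat Z_{ni}$ together with $Z_{ni}\ge 1/n$ gives precisely the $O(\sqrt{k_n}\log n)$ control you need. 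Second, the upper-tail step via Cauchy--Schwarz and Condition~\ref{cond:BerBuc}(iv) is not immediate: $e_n(Z_{ni})$ is the tail empirical process evaluated at a random argument built from the same $U_s$, so the variance bound for fixed $x$ does not transfer directly. The paper reduces to $k_n^{-1}\sum_i\ind(Z_{ni}\ge m)\bigl|\sum_t f(U_t,Z_{ni})\bigr|$ and then invokes the block-decomposition argument from the proof of Lemma~9.1 in \cite{BerBuc18}; your sketch would need a comparable decoupling device. (Minor: the tail estimate $1-\hat H_{k_n}(m)=O_\Prob(e^{-\theta m})$ follows from $Z_{1:b_n}\wto\xi$, not from Condition~\ref{cond3}(i).)
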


\begin{proof}[Proof of Lemma \ref{lem:cfg-fidis}] We proceed similarly as in the proof of Lemma 9.3 in \cite{BerBuc18}.
	Weak convergence of the vector $(e_n(x_1),\ldots,e_n(x_m))'$ is a consequence of Theorem 4.1 in \cite{Rob09}. 
	For the treatment of the joint convergence with $B_n$, we only consider the case $m=1$ and set $x_1=x$; the general case can be treated analogously.  For $i=1, \dots, k_n$, we decompose a block $I_i=\{(i-1)b_n+1, \dots, ib_n\}$ into a big block $I_i\ho{+}$ and a small block $I_i\ho{-}$, where, recalling $\ell_n$ from Condition~\ref{cond:BerBuc}(iii),
	\[ 
	I_i^{+} = \{(i-1)b_n+1,\ldots,ib_n-\ell_n\}, \qquad I_i^{-} = \{ ib_n-\ell_n+1,\ldots,ib_n \}, 
	\] and set 
	\begin{align*}
		e_n^{+}(x) &= \frac{1}{\sqrt{k_n}} \sum_{i=1}^{k_n} \sum_{s \in I_i^{+}} \Big\{ \I \Big( U_s > 1-\frac{x}{b_n} \Big) -\frac{x}{b_n} \Big\},  \qquad
		B_n^{+} = \frac{1}{\sqrt{k_n}} \sum_{i=1}^{k_n} \Big\{ \log(Z_{ni}^{+}) - \Exp[\log(Z_{ni}^{+})] \Big\},
	\end{align*}
	where $Z_{ni}^{+} = b_n (1-N_{ni}^{+}), N_{ni}^{+} = \max_{s \in I_i^{+}} U_s$.
	Next, according to Lemma 6.6 in \cite{Rob09}, 
	\[ 
	e_n^{-}(x) := e_n(x) - e_n^{+}(x) = o_\Prob(1). 
	\] 
	It can further be shown by the same arguments as in the proof of Lemma~9.3 in \cite{BerBuc18} that
	\[
	B_n^{-} := B_n - B_n^{+} = o_\Prob(1).
	\] 	
	Finally, for $\varepsilon \in (0,c_1 \wedge c_2)$, define $A_n\ho{+} = \{\min_{i=1,\ldots,k_n} N_{ni}\ho{+} > 1- \varepsilon \}$, and note that $\Prob(A_n\ho{+}) \to 1$ by Condition \ref{cond:BerBuc}(v).  As a consequence of the previous three statements, it suffices to show that, using the Cram\'{e}r-Wold device, 
	\begin{align} \label{eq:cw1}
	\{ \lambda_1 e_n\ho{+}(x)+ \lambda_2 B_n\ho{+} \} \I_{A_n\ho{+}} \wto \lambda_1 e(x) + \lambda_2 B,
     \end{align}
     for any $\lambda_1,\lambda_2 \in \mathbb{R}$. 
     
     Now, the left-hand side of \eqref{eq:cw1} can be written as 
        \begin{align*}
         &\phantom{=}~  \{ \lambda_1 e_n\ho{+}(x)+ \lambda_2 B_n\ho{+} \} \I_{A_n\ho{+}} 
         = 
         \frac{1}{\sqrt{k_n}} \sum_{i=1}\ho{k_n} \tilde{g}_{i,n} + o_\Prob(1), \nonumber
        \end{align*}
        where $\tilde{g}_{i,n} = g_{i,n} \I(Z_{ni}\ho{+} < \varepsilon b_n)$ and  where 
        \[ 
        g_{i,n} = \lambda_1 \sum\nolimits_{s \in I_i\ho{+}} \Big\{ \I\Big( U_s > 1- \frac{x}{b_n} \Big) - \frac{x}{b_n} \Big\} + \lambda_2 \big\{  \log(Z_{ni}\ho{+})- \Exp [ \log(Z_{ni}\ho{+})] \big\}.  
        \] 
        Note, that $\tilde{g}_{i,n}$ only depends on the block $I_i\ho{+}$ and is
        $\mathcal{B}_{\scs (i-1)b_n+1:ib_n-\ell_n}\ho{\varepsilon}$-measurable. In particular, the $(\tilde{g}_{i,n})_{i=1,\ldots,k_n}$ are each separated by a small block of length $\ell_n$.  A standard argument based on characteristic functions and the assumption on alpha mixing may then be used to show that the weak limit of $k_{n}^{\scs -1/2}   \sum_{i=1}\ho{k_n} \tilde{g}_{i,n} $ is the same as if the $\tilde g_{i,n}$ were independent.
        
        Next, we show that  Ljapunov's condition (\cite{Bil79}, Theorem 27.3) is satisfied. By Minkowski's inequality, for any $p \in (2,2+\delta)$, 
         $C_\infty = \sup_{n \in \N} \Exp [ |\tilde{g}_{1,n} |^p ] < \infty$  by Condition \ref{cond:BerBuc}(ii) and \ref{cond2}(i). 
         Further, by stationarity and independence, we get 
        \begin{align*}
         \frac{\sum_{i=1}\ho{k_n} \Exp [ |\tilde{g}_{i,n} |^p ]}{\Var\big( \sum_{i=1}\ho{k_n} \tilde{g}_{i,n} \big)\ho{p/2}} 
         &= 
          k_n^{1-p/2} \frac{  \Exp [ |\tilde{g}_{1,n} |^p ]}{\big(\Exp [ \tilde{g}_{1,n}\ho{2} ] \big)\ho{p/2}}
         \leq 
        C_\infty  \times  k_n\ho{1-p/2} \Exp [ \tilde{g}_{1,n}\ho{2} ]\ho{-p/2}. \nonumber 
        \end{align*}
        Hence, provided $\lim_{n \to \infty} \Exp [ \tilde{g}_{1,n}\ho{2} ]$ exists, the last expression converges to $0$ and hence Ljapunov's condition is met. As a consequence, $k_{n}^{\scs -1/2}   \sum_{i=1}\ho{k_n} \tilde{g}_{i,n}$ weakly converges to a centered normal distribution with variance $\lim_{n \to \infty} \Exp [ \tilde{g}_{1,n}\ho{2} ]$.
        
        Finally, since $\lim_{n \to \infty} \Exp [ \tilde{g}_{1,n}\ho{2} ] = \lim_{n \to \infty} \Exp [ {g}_{1,n}\ho{2} ]$, it remains to be shown that
        \[
        \lim_{n \to \infty} \Exp [ {g}_{1,n}\ho{2} ] = \lambda_1^2 r(x,x) +  2\lambda_1\lambda_2 h(x) + \lambda_2^2 \pi^2/6.
        \]
        Since similar arguments as in the proof of $B_n^-=\op$ and $e_n^-=\op$ allow us to replace $I_1\ho{+}$ by $I_1$ and then $b_n$ by $n$, this in turn is a consequence of 
        \begin{align} 
        \label{eq:v1}
        & \lim_{n \to \infty} \Var \big( N_n\ho{(x)}(E)  \big) = r(x,x), \\
         \label{eq:v2}
        & \lim_{n \to \infty} \Cov \big\{ N_n\ho{(x)}(E) , \log(Z_{1:n})  \big\} = f(x), \\
         \label{eq:v3}
        &\lim_{n \to \infty} \Var \{ \log(Z_{1:n}) \} = {\pi\ho{2}}/ {6}. 
        \end{align}
        The assertion in \eqref{eq:v1} follows from Theorem 4.1 in \cite{Rob09}.  Further, since  $Z_{1:n} \wto \xi \sim \mathrm{Exp}(\theta)$ and since since  $| \log(Z_{1:n}) |\ho{2}$ is uniformly integrable by Condition \ref{cond2}(i), we have        
        \[
         \lim_{n \to \infty} \Var \{ \log(Z_{1:n}) \} = \Var \{ \log(\xi) \} = \frac{\pi\ho{2}}{6}, \nonumber 
       \]
       which is \eqref{eq:v3}. Finally, 
note that $\Exp [ N_n\ho{\scs (x)}(E)] =x$ and $\Exp [ \log(Z_{1:n}) ] \to \varphi_{(\rm C)}(\theta)$ by similar arguments as given above. As a consequence, \eqref{eq:v2} follows from $\lim_{n\to\infty} \Exp\big[ N_n\ho{\scs (x)}(E)  \log(Z_{1:n}) \big] = h(x)$. The latter in turn can be seen as follows: first,
        \begin{align} \label{eq:enz}
           \Exp\big[ N_n\ho{(x)}(E)  \log(Z_{1:n}) \big] 
          &= 
          \sum_{i=1}\ho{n} i \, \Exp\big[ \I( N_n\ho{(x)}(E)=i) \log(Z_{1:n}) \big].  
         \end{align}
         The expected value on the right-hand side can be written as
         \begin{align*}
          &\phantom{=}~ \int_{0}\ho{\infty} \Pro\big(\I(N_n\ho{(x)}(E)=i)  \log(Z_{1:n}) > y\big) \ \mathrm{d}y - \int_{-\infty}\ho{0} 1- \Pro(\I(N_n\ho{(x)}(E)=i)  \log(Z_{1:n}) > y) \ \mathrm{d}y  \nonumber\\
          &=  \int_{0}\ho{\infty} \Pro(N_n\ho{(x)}(E)=i, Z_{1:n} > e^y) \ \mathrm{d}y - \int_{-\infty}\ho{0} \Pro(N_n\ho{(x)}(E)=i) - \Pro(N_n\ho{(x)}(E)=i,  Z_{1:n} > e^y) \ \mathrm{d}y. \label{mixedEW}
        \end{align*}
        Now, 
        \begin{align*}
         \Pro(N_n\ho{(x)}(E)=i, \ Z_{1:n} > e\ho{y}) 
         &= \Pro(N_n\ho{(x)}(E)=i, \ N_n\ho{(e\ho{y})}(E)=0) \to 
         \begin{cases} 
          p_2\ho{(x,e\ho{y})}(i,0)&, \ x \geq e\ho{y} \geq 0, \\
          0 &, e\ho{y}>x \geq 0
         \end{cases} \nonumber
        \end{align*}
        and $\Pro(N_n\ho{(x)}(E)=i) \to p\ho{(x)}(i)$, see \cite{Per94} and \cite{Rob09}. By uniform integrability we obtain that the expected value on the right-hand side of \eqref{eq:enz} converges to 
        \begin{align*}  \sum_{i=1}\ho{\infty} i \ \left[ \int_{0}\ho{\infty} \I(e\ho{y} \leq x) p_2\ho{(x,e\ho{y})}(i,0) \ \mathrm{d}y - \int_{-\infty}\ho{0} p\ho{(x)}(i) - \I(e\ho{y} \leq x) p_2\ho{(x,e\ho{y})} (i,0) \ \mathrm{d}y \right]  
         = h(x).
         \end{align*}
         The proof is finished.  
\end{proof}
        
       \begin{proof}[Proof of Lemma \ref{lem:cfg-process}] For fixed $x >0$, consider the function
        \[ 
        f_n : \R \to \R, \ f_n(z) = \wk \log \Big\{ 1 + \frac{1}{\sqrt{k_n}}\Big( \frac{z}{x}+\frac{1}{\wk x} \Big) \Big\}. 
        \]
        For $z_n \to z$, one has $f_n(z_n) \to e(z)/z$.
        Hence, since $(e_n(x_1),\ldots, e_n(x_m),B_n)'$ converges in distribution to $(e(x_1),\ldots,e(x_m),B)'$ for any $x_1, \dots, x_m >0$ and $m\in\N$  by Lemma~\ref{lem:cfg-fidis}, we can apply the extended continuous mapping theorem
        (Theorem 18.11 in \citealp{Van98}) to obtain 
        $\left( W_n(x_1),\ldots,W_n(x_m),B_n \right) '
         \to \left( e(x_1)/x_1,\ldots,e(x_m)/x_m,B \right)'$ in distribution. This is the fidis-convergence needed to prove Lemma \ref{lem:cfg-process}. 
         
Asymptotic tightness of the tail empirical process $e_n$ follows from Theorem 4.1 in \cite{Rob09}. Asymptotic tightness of $B_n$ follows from its weak convergence. This implies asymptotic tightness of the vector $(e_n,B_n)$, for instance by a simple adaptation of Lemma 1.4.3 in \cite{VanWel96}.
\end{proof}

	\begin{proof}[Proof of Lemma \ref{lem:cfg-EE'}.] 
	Let $H(x)= 1- e^{-\theta x}$ be the c.d.f. of the $\Ed(\theta)$-distribution. From the proof of Lemma 9.2 in \cite{BerBuc18}, we have, for any $m\in\N$,
	\[
	\sup_{x \in [1/m, m]} |\hat H_{k_n}(x) - H(x) | = o_\Prob(1), \quad n \to \infty.
	\]
	Since	
	\[ 
	E_{n,m} -E_{n,m}' = \int_{1/m}^{m} W_n(x) \ 
	\mathrm{d} ( \hat{H}_{k_n}-H)(x),
	\] 
	the assertion follows from Lemma~\ref{lem:cfg-process}, Lemma C.8 in \cite{BerBuc17} and the continuous mapping theorem.	 	
	\end{proof}

\begin{proof}[Proof of Lemma \ref{lem:cfg-enmbn}] As a consequence of Lemma \ref{lem:cfg-EE'}, Lemma \ref{lem:cfg-process}  and the continuous mapping theorem, we have 
\begin{multline*} 
E_{n,m} + B_n =  \int_{1/m}\ho{m} W_n(x) \ \theta e\ho{-\theta x} \ \mathrm{d}x + B_n + o_\Prob(1) \\
 \wto \int_{1/m}\ho{m} \frac{e(x)}{x} \theta e\ho{-\theta x} \ \mathrm{d}x  +B = E_m' + B  \sim \Nor(0,\tau_m\ho{2}),
\end{multline*}
where the variance $\tau_m^2$ is given by
        \begin{align*}
         \tau_m\ho{2} &=  \Var \Big\{ \ \int_{1/m}\ho{m} e(x) \frac{1}{x} \theta e\ho{-\theta x} \ \mathrm{d}x \Big\} + 2 \Cov \Big\{ \ \int_{1/m}\ho{m} e(x) \frac{1}{x} \theta e\ho{-\theta x} \ \mathrm{d}x, B \Big\} + \Var(B)  \nonumber\\
         &= \theta\ho{2} \int_{1/m}\ho{m} \int_{1/m}\ho{m} r(x,y) \frac{1}{xy} e\ho{-\theta(x+y)} \ \mathrm{d}x \ \mathrm{d}y + 2 \theta \int_{1/m}\ho{m} f(x) \frac{1}{x} e\ho{-\theta x} \ \mathrm{d}x + \frac{\pi\ho{2}}{6} \nonumber
        \end{align*}
        as asserted.
\end{proof}

        \begin{proof}[Proof of Lemma \ref{lem:cfg-sigcon}] By the definition of $\tau_m\ho{2}$ in Lemma \ref{lem:cfg-enmbn}
        \begin{align}
        \label{eq:tauml}
          \lim_{m \to \infty} \tau_m\ho{2} 
         = \theta\ho{2} \int_{0}\ho{\infty} \int_{0}\ho{\infty} r(x,y) \frac{1}{xy} e\ho{-\theta(x+y)}\diff x \diff y + 2 \theta \int_{0}\ho{\infty} f(x) \frac{1}{x} e\ho{-\theta x} \ \mathrm{d}x + \frac{\pi\ho{2}}{6}.
        \end{align}
        For  $x>y$, we have $r(x,y) = \theta x \Exp \big[ \xi_1^{(y/x)} \xi_2^{(y/x)} \big]$ with $(\xi_1^{(y/x)},\xi_2^{(y/x)}) \sim \pi_2^{(y/x)}$.  Hence, applying the transformation  $z=y/x$, the first summand on the right-hand side of \eqref{eq:tauml} can be written as         \begin{align}
        	\theta^2 \int_{0}^{\infty} \int_{0}^{\infty} r(x,y) \frac{1}{xy} e^{-\theta(x+y)} \diff x \diff y
        	&= 2 \theta^3 \int_{0}^{\infty} \int_{0}^{x} \Exp \big[ \xi_1^{(y/x)} \xi_2^{(y/x)} \big] \frac{1}{y} e^{-\theta (x+y)} \diff y \diff x \nonumber\\
        	&= 2 \theta^3 \int_{0}^{\infty} \int_{0}^{1} \frac{\Exp \big[ \xi_1^{(z)} \xi_2^{(z)} \big]}{z} e^{-\theta x(1+z)} \diff z \diff x \nonumber \\
        	&= 2 \theta^2 \int_{0}^{1} \frac{\Exp \big[ \xi_1^{(z)} \xi_2^{(z)} \big]}{z(z+1)} \ \mathrm{d}z. \label{eq:int1}
       \end{align}      
       For the second summand on the right-hand side of \eqref{eq:tauml}, note that 
       \begin{align} \label{eq:p2sum1}
       \sum_{i=1}^{\infty} i p_2^{(x,e^y)}(i,0) = \Exp \big[ \xi_1^{(e^y/x)} \I(\xi_2^{(e^y/x)}=0) \big] \theta x e^{-\theta e^y},
       \end{align} 
       see  Formula (A.7) in the proof of Lemma 9.6 in \cite{BerBuc18} and 
       $\sum_{i=1}^{\infty} i p^{(x)}(i) = \Exp[ N_E^{\scs (x)} ] = x$, see \cite{Rob09}. Therefore, we can rewrite $h$ from Lemma~\ref{lem:cfg-fidis} as follows
       \begin{align*}
       	h(x) &= \int_{0}^{\infty} \I(e^y \leq x)  \Exp \big[ \xi_1^{(e^y/x)} \I(\xi_2^{(e^y/x)}=0) \big] \theta x e^{-\theta e^y} \ \mathrm{d}y \nonumber\\
        &\hspace{4cm}- \int_{-\infty}^{0} x - \I(e^y \leq x) \Exp \big[ \xi_1^{(e^y/x)} \I(\xi_2^{(e^y/x)}=0) \big] \theta x e^{-\theta e^y} \ \mathrm{d}y. \\
        &= x \int_{1/x}^{\infty} \I(z \leq 1)  \Exp \big[ \xi_1^{(z)} \I(\xi_2^{(z)}=0) \big]\theta   \frac{e^{-\theta zx}}{z} \ \mathrm{d}z \nonumber\\
        &\hspace{4cm}- x \int_{0}^{1/x} \frac1z - \I(z \leq 1) \Exp \big[ \xi_1^{(z)} \I(\xi_2^{(z)}=0) \big] \theta  \frac{e^{-\theta zx}}{z} \ \mathrm{d}z,
       \end{align*}   
       where we have used the transformation $z=e^y/x$.
       For $0 < x \leq 1$, the first integral is zero and we obtain
       \begin{align*}
       	h(x) &= -  x \int_{0}^{1} \frac1z - \Exp \big[ \xi_1^{(z)} \I(\xi_2^{(z)}=0) \big] \theta  \frac{e^{-\theta zx}}{z} \ \mathrm{d}z - x \int_1^{1/x} \frac1z \diff z \\     
	 &= -  x \int_{0}^{1} \frac1z - \Exp \big[ \xi_1^{(z)} \I(\xi_2^{(z)}=0) \big] \theta  \frac{e^{-\theta zx}}{z} \ \mathrm{d}z + x \log(x),  \nonumber  
	 \end{align*}
       while for $x >1$, 
       \begin{align*}
       	h(x) 
	= 
	x \int_{1/x}^{1} \Exp \big[ \xi_1^{(z)} \I(\xi_2^{(z)}=0) \big] \theta \frac{e^{-\theta zx}}{z} \diff z - 
	x \int_{0}^{1/x} \frac1z - \Exp \big[ \xi_1^{(z)} \I(\xi_2^{(z)}=0) \big] \theta \frac{e^{-\theta zx}}{z}  \ \mathrm{d}z. \nonumber
       \end{align*}
       As a consequence, writing $g(z) = \Exp \big[ \xi_1^{(z)} \I(\xi_2^{(z)}=0) \big]$, we obtain
       \begin{align*}
       \int_0^\infty h(x) \frac1x e^{-\theta x} \diff x
       =
         \int_0^1  \log (x) e^{-\theta x}  \diff x  
        & -  \int_0^1 e^{-\theta x} \int_0^1  \frac1z -g(z) \theta  \frac{e^{-\theta zx}}{z}  \diff z   \diff x \\
        &  + \int_1^\infty e^{-\theta x}  \int_{1/x}^{1}g(z) \theta \frac{e^{-\theta zx}}{z} \diff z \diff x \\
        &- \int_1^\infty  e^{-\theta x}  \int_{0}^{1/x} \frac1z -g(z) \theta \frac{e^{-\theta zx}}{z}  \ \mathrm{d}z \diff x.
       \end{align*}
       Next, some tedious calculations based on Fubini's theorem allow to rewrite the  sum of the last three double integrals as
       \begin{align*}
       s = \int_0^1 \frac{e^{-\theta /z} - 1}{\theta z} + \frac{g(z)}{z(1+z)} \diff z.
       \end{align*}
       Using the fact that $g(z) = \frac1\theta -  \Exp \big[ \xi_1^{(z)} \I(\xi_2^{(z)}>0) \big]$, we thus obtain
        \begin{align*}
        \int_0^\infty h(x) \frac1x e^{-\theta x} \diff x 
               &= \int_0^1  \log (z) e^{-\theta z} + \frac{e^{-\theta / z} - 1}{\theta z} + \frac1{\theta z(1+z)} -  \frac{\Exp \big[ \xi_1^{(z)} \I(\xi_2^{(z)}>0) \big]}{z(1+z)} \diff z \\
       &= \int_0^1\log (z) e^{-\theta z}  +  \frac{e^{-\theta / z}}{\theta z} - \frac1{\theta(1+z)} -  \frac{\Exp \big[ \xi_1^{(z)} \I(\xi_2^{(z)}>0) \big]}{z(1+z)} \diff z .
       \end{align*}
       Finally, one can show 
       \[
       \int_0^1 \log (z) e^{-\theta z} + \frac{e^{-\theta/z}}{\theta z} \diff z= - (\log \theta + \gamma) /\theta = \varphi_{(\rm C)}(\theta) / \theta,
       \] such that, assembling terms and recalling $f(x) = h(x) - x \varphi_{(\rm C)}(\theta)$,
       \begin{align} \label{eq:int2}
       \int_0^1 f(x) \frac1x e^{-\theta x} \diff x 
       &=
         \int_0^1 h(x) \frac1x e^{-\theta x} \diff x -  \varphi_{(\rm C)}(\theta)  \int_0^\infty e^{-\theta x} \diff x \nonumber \\
        &= - \log(2) / \theta -  \int_0^1 \frac{\Exp \big[ \xi_1^{(z)} \I(\xi_2^{(z)}>0) \big]}{z(1+z)} \diff z.
       \end{align}
       The lemma is now an immediate consequence of \eqref{eq:tauml}, \eqref{eq:int1} and \eqref{eq:int2}.
       \end{proof}

       \begin{proof}[Proof of Lemma \ref{lem:cfg-limsup}] 
By Lemma~\ref{lem:cfg-EE'}, it suffices to show the assertion with $E_{n,m}$ replaced by $E_{n,m}'$.
Define $\tilde{e}_{n}(x) := e_n(x)+k_n\ho{\scs-1/2}$, such that, by Condition \ref{cond_max}, we have 
\[
\max_{Z_{ni} \geq c} \left| \frac{\tilde{e}_n(Z_{ni})}{Z_{ni}\sqrt{k_n}} \right| = o_\Prob(1)
\]
for any constant $c >0$. Fix $m\in\N$. By the previous display, for any $\varepsilon >0$, the event 
\[ 
B_n = B_n(m,\varepsilon) = 
\Big\{ \max_{Z_{ni} \geq m} \Big| \frac{\tilde{e}_n(Z_{ni})}{Z_{ni}\sqrt{k_n}} \Big| \leq \varepsilon \Big\} 
\]
satisfies $\Pro(B_n) \to 1$. Next,         
\begin{align}
          |E_{n,m}-E_n| 
          & \leq \Big| \intnu \log \Big( 1+ \frac{\tilde{e}_n(x)}{x\sqrt{k_n}} \Big) \sqrt{k_n} \ \I_{(0,1/m]}(x) \ 
          \mathrm{d}\hat{H}_{k_n}(x) \Big| \nonumber\\
          & \hspace{2cm} + \Big| \intnu \log \Big( 1+ \frac{\tilde{e}_n(x)}{x\sqrt{k_n}} \Big) \sqrt{k_n} \ \I_{[m,\infty)}(x) \ 
          \mathrm{d}\hat{H}_{k_n}(x) \Big| \nonumber\\
          &=: |V_{n1}| + |V_{n2}|, \nonumber
         \end{align}
         such that 
         \begin{equation} 
         |E_{n,m}-E_n| = |E_{n,m}-E_n| \I_{B_n} + o_\Prob(1) \leq |V_{n1}| + |V_{n2}| \I_{B_n} + \op. \label{eq:enmen}
         \end{equation}
         We begin by treating the term $|V_{n2}| \I_{B_n}$. Since $\log(1+x) = \int_0^1 x/(1+sx) \ \mathrm{d}s$ for any $x >-1$,  we have
         \begin{align}
          V_{n2} \I_{B_n} &= \intnu \frac{\tilde{e}_n(x)}{x} \intne \frac{1}{1+s \frac{\tilde{e}_n(x)}{x\sqrt{k_n}}}
          \ \mathrm{d}s \ 
          \ \I(x \geq m) \ \mathrm{d}\hat{H}_{k_n}(x) \, \I_{B_n} \nonumber\\
          &= \frac{1}{k_n}\sum_{i=1}\ho{k_n} \frac{\tilde{e}_n(Z_{ni})}{Z_{ni}} \I(Z_{ni} \geq m) 
          \intne \frac{1}{1+s \frac{\tilde{e}_n(Z_{ni})}{Z_{ni}\sqrt{k_n}}} \ \mathrm{d}s \, \I_{B_n} \nonumber\\
          &= k_n\ho{-3/2} \sum_{i=1}\ho{k_n} \frac{1}{Z_{ni}} \I(Z_{ni} \geq m) 
          \intne \frac{1}{1+s \frac{\tilde{e}_n(Z_{ni})}{Z_{ni}\sqrt{k_n}}} \ \mathrm{d}s \
          \Big\{\sum_{t=1}^n f(U_t,Z_{ni}) + 1 \Big\}  \, \I_{B_n}, \nonumber
         \end{align}
         where 
         \begin{align} \label{eq:fuz}
         f(U_t,Z_{ni}) = \I(U_t > 1-Z_{ni}/{b_n} ) - Z_{ni}/{b_n}. 
          \end{align}
         For given $\varepsilon \in (0,c_1 \wedge c_2)$ with $c_j$ as in Condition~\ref{cond:BerBuc}, let $C_n =C_n(\varepsilon)$ denote the event $\{ \min_{i=1,\ldots,k_n} N_{ni} > 1- \varepsilon/2\}
         = \{ \max_{i=1,\ldots,k_n}Z_{ni} < \varepsilon b_n/{2} \}$, which satisfies $\Pro(C_n) \to 1$ 
         by Condition \ref{cond:BerBuc}(v). 
         Hence, we can write 
         $V_{n2} \I_{B_n} = \bar{V}_{n2} \I_{C_n} + \op$, where 
         \[ 
         \bar{V}_{n2} = k_n\ho{-3/2} \sum_{i=1}\ho{k_n} \frac{1}{Z_{ni}} \I(\varepsilon b_n/2 > Z_{ni} \geq m) 
          \intne \frac{1}{1+s \frac{\tilde{e}_n(Z_{ni})}{Z_{ni}\sqrt{k_n}}} \ \mathrm{d}s  
          \Big\{ \sum_{t=1}^n f(U_t,Z_{ni}) + 1 \Big\}\, \I_{B_n}. 
          \]
          We obtain 
          \begin{align*}
           |\bar{V}_{n2}| 
          & \leq \frac{1}{m} k_n\ho{-3/2} \sum_{i=1}\ho{k_n} \I(\varepsilon b_n/2 > Z_{ni} \geq m) 
          \intne \frac{1}{\left| 1+s \frac{\tilde{e}_n(Z_{ni})}{Z_{ni}\sqrt{k_n}}\right|} \ \mathrm{d}s \,
          \Big\{ \Big| \sum_{t=1}^n f(U_t,Z_{ni}) \Big| + 1 \Big\} \, \I_{B_n}. 
          \end{align*}
          On the event $B_n$ the integral over $s$ can be bounded as follows
          \begin{align}
           & \intne \frac{1}{\left| 1+s \frac{\tilde{e}_n(Z_{ni})}{Z_{ni}\sqrt{k_n}}\right|} \ \mathrm{d}s \ \I_{B_n}
           \leq \intne \frac{1}{1-s\varepsilon} \
           \mathrm{d}s  \ \I_{B_n} \leq \frac{1}{1-\varepsilon}. \nonumber
          \end{align}
          The previous two displays imply that $|\bar{V}_{n2}|$ is bounded by 
          \begin{align} 
          & \frac{1}{m} \frac{1}{1-\varepsilon} \ k_n\ho{-3/2} \sum_{i=1}\ho{k_n} \I(\varepsilon b_n/2 > Z_{ni} \geq m) 
         \Big\{ \Big| \sum_{t=1}^n f(U_t,Z_{ni}) \Big| + 1 \Big\} \nonumber\\
          = & \frac{1}{m} \frac{1}{1-\varepsilon} \ k_n\ho{-3/2} \sum_{i=1}\ho{k_n} \I(\varepsilon b_n/2 > Z_{ni} \geq m) 
          \Big| \sum_{t=1}^n f(U_t,Z_{ni}) \Big| + O_\Prob(k_n^{-1/2}). \nonumber
          \end{align} 
          The upper bound  can now be treated exactly as in the proof of Lemma 9.1 in \cite{BerBuc18}, finally yielding
          \begin{align} 
          \label{eq:vn2}
          \lim_{m \to \infty} \limsup_{n \to \infty} 
          \Pro(|{V}_{n2} \I_{B_n} | > \delta)=0.
          \end{align}
          
          It remains to treat $|V_{n1}|$. 
Write
         \begin{align}
         |V_{n1}| &\le T_{n}(0,dk_n^{-1}) + T_n(dk_n^{-1}, dk_n^{-\mu}) + T_n(dk_n^{-\mu}, 1/m) \nonumber \\
         &=: T_{n1} + T_{n2} + T_{n3}, \label{eq:vn1}
         \end{align}
         where, for some constant $d>0$ and $\mu=\mu_d$ determined below,       
                  \begin{align*}
         	T_{n}(a,b) &= \sqrt{k_n} \int_0^\infty \ind(x \in (a,b]) \Big| \log \Big( 1+ \frac{\tilde{e}_n(x)}{x\sqrt{k_n}} 
         	\Big) \Big| \ \mathrm{d}\hat{H}_{k_n}(x).
         \end{align*}  
         We start by covering the term $T_{n1}=T_{n}(0,dk_n^{-1})$ and determining the constants $d$ and $\mu$. Note that for the event
         $J_n = \{ \min_{i=1,\ldots,k_n} Z_{ni} > d k_n\ho{-1} \}$ one has 
         \begin{align*}
         \Pro(J_n) & = \Pro \big(k_n \min_{i=1,\ldots,k_n}Z_{ni}  > d \big) = \Pro\big(n \ \big(  1- \max_{i=1,\ldots,k_n} N_{ni} \big) > d\big) 
          = \Pro(Z_{1:n} > d) \to e^{-d\theta}.
         \end{align*}
         Then, 
         \begin{align}
          \Pro(T_{n1}>\delta) &= \Pro (T_{n1} \I_{J_n} + T_{n1} \I_{J_n\ho{c}} > \delta) \nonumber\\
          &\leq \Pro (T_{n1} \I_{J_n} > \delta/2) + \Pro (T_{n1} \I_{J_n\ho{c}} > \delta/2) \nonumber\\ 
          &\leq \Pro(J_n\ho{c}) \to 1-\exp(-d \theta). \nonumber
         \end{align}
         Hence, for any given $\eps >0$ we can choose $d = d(\eps) < -\log(1-\eps)/\theta$, such that 
         \begin{align} \label{eq:tn1}
         \limsup_{n \to \infty} \Pro(T_{n1}>\delta) \leq \limsup_{n \to \infty} \Pro(J_n\ho{c}) = 1- \exp(-d \theta) < \eps.
         \end{align}
         
        Now, choose $\mu=\mu_d \in (1/2, 1/\{ 2(1-\tau) \})$ from Condition~\ref{cond_ewerte}, where $\tau\in(0,1/2)$ is from Condition~\ref{cond_gewKonv}. Next, consider $T_{n3} = T_n(dk_n^{-\mu}, 1/m)$ and note that, for $x \in (d k_n^{-\mu},1/m]$, we have 
         \begin{equation}
         	\left| \frac{\tilde{e}_n(x)}{x\sqrt{k_n}} \right| = 
         	\left| \frac{\tilde{e}_n(x)}{x^{\tau}} \right| \ \frac{1}{x^{1-\tau}\sqrt{k_n}} \leq 
         	\frac{1}{d\ho{1-\tau}} \ \left| \frac{\tilde{e}_n(x)}{x^{\tau}} \right| \ k_n^{\mu(1-\tau)-1/2} =o_\Prob(1) \nonumber
         \end{equation} 
         uniformly in $x$, by Condition \ref{cond_gewKonv}. As a consequence, the event 
         \[
         D_n = \Big\{ \Big|
         \frac{\tilde{e}_n(x)}{x \sqrt{k_n}} \Big| \leq \frac{1}{2}  \Big\}
         \] 
         satisfies $\ind_{D_n^c} = o_\Prob(1)$, whence, recalling that $x/(1+x) \le \log(1+x) \le x$ for any $x>-1$, we have
         \begin{align}
         	T_{n3} &= \sqrt{k_n} \int_{(d k_n^{-\mu},1/m]} \Big| \log \Big( 1+ \frac{\tilde{e}_n(x)}{x\sqrt{k_n}} 
         	\Big) \Big| \I_{D_n} 
         	\ \mathrm{d}\hat{H}_{k_n}(x) + \op \nonumber\\
         	& \leq 
	\int_{(d k_n^{-\mu},1/m]} \max\Big\{ \Big| \frac{\tilde{e}_n(x)}{x} \Big|,
         	\Big| \frac{\tilde{e}_n(x)}{x} \Big| \Big(1+ \frac{\tilde{e}_n(x)}{x\sqrt{k_n}}\Big)\ho{-1} \Big\} 
         	 \I_{D_n} \ \mathrm{d}\hat{H}_{k_n}(x) +\op \nonumber\\             
                & \leq 2 \int_{(d k_n^{-\mu},1/m]}  \left| \frac{\tilde{e}_n(x)}{x^{\tau}} \right| 
         	\frac{1}{x^{1-\tau}} \I_{D_n} \ \mathrm{d}\hat{H}_{k_n}(x) +\op. \nonumber         
         \end{align}
         By Lemma \ref{hilfslem Konv}, Condition \ref{cond_gewKonv} and the continuous mapping theorem, the last expression converges weakly to
         \begin{align*}
          T_3 (m) = 2 \ \int_{0}^{1/m} \ \left| \frac{e(x)}{x^{\tau}} \right| \frac{1}{x^{1-\tau}} \ \mathrm{d}H(x). 
         \end{align*}
         As a consequence,
         \begin{align} \label{eq:tn3}
         \lim_{m\to\infty} \limsup_{n\to\infty} \Prob (T_{n3} \ge \delta) \le \lim_{m\to\infty} \Prob(T_3(m) >\delta) = 0.
         \end{align}

         Finally, regarding $T_{n2}$, note that,
         for $x \in (dk_n\ho{-1},dk_n\ho{-\mu})$, 
         \begin{align*}
         \frac{\tilde{e}_n(x)}{x\sqrt{k_n}} &\leq 
         \frac1x \Big( \frac1{k_n} + \frac1{k_n} \sum_{i=1}^n \ind(U_i > 1-x/b_n) \Big) \le \frac{n+1}{d}, \\
         \frac{\tilde{e}_n(x)}{x\sqrt{k_n}}  &\ge 
         \frac1x \Big( \frac1{k_n} -\frac1{k_n} \sum_{i=1}^n x/b_n\Big) \ge \frac{1}{dk_n^{1-\mu}} - 1,
         \end{align*}
         which implies
         \begin{align*}
           \Big| \log \Big( 1+ \frac{\tilde{e}_n(x)}{x\sqrt{k_n}} \Big) \Big| 
          &= \log \Big( 1+ \frac{\tilde{e}_n(x)}{x\sqrt{k_n}}\Big) \I\Big(\frac{\tilde{e}_n(x)}{x\sqrt{k_n}} >0\Big) - \log \Big( 1+ \frac{\tilde{e}_n(x)}{x\sqrt{k_n}}\Big) \I\Big(\frac{\tilde{e}_n(x)}{x\sqrt{k_n}} <0\Big) \\
          & \leq \log \left( (n+1)d\ho{-1}+1 \right) + \log \left( d k_n\ho{1-\mu} \right) \\
          & \lesssim \log(n).
         \end{align*}
          As a consequence, 
         the term $T_{n2}=T_n(dk_n^{-1}, dk_n^{-\mu}) $ can be bounded as follows
         \begin{align}
          T_{n2} 
          & \lesssim \log(n) \ \sqrt{k_n} \int_{(d k_n^{-1},d k_n^{-\mu}]} \ \mathrm{d}\hat{H}_{k_n}(x) 
          \nonumber
          = \frac{\log(n)}{\sqrt{k_n}} \sum_{i=1}\ho{k_n} \I \big(Z_{ni} \in (d k_n\ho{-1},d k_n\ho{-\mu}] \big). \nonumber
         \end{align}
         Hence, by Condition \ref{cond_ewerte}, 
         \begin{align}
          E\left[ T_{n2} \right] 
          & \lesssim 
          \log(n) \ \sqrt{k_n}  \Pro(Z_{n1} < d k_n\ho{-\mu}) \nonumber\\
          &= \log(n) \ \sqrt{k_n}  \{1-\exp(-\theta d k_n\ho{-\mu})\} + o(1) \nonumber\\
          &= \theta d \log(n) \ k_n\ho{1/2-\mu} \{ 1+o(1) \} + o(1) \nonumber\\
          & = O( \log(k_n) k_n^{1/2-\mu}) =o(1) 
          \label{eq:tn2},
         \end{align}
         where the last line follows from $\log n =\log k_n + \log b_n \lesssim (1+q) \log k_n$ by Condition~\ref{cond:bnkn2}.            

The assertion follows from \eqref{eq:enmen}, combined with \eqref{eq:vn2}, \eqref{eq:vn1}, \eqref{eq:tn1}, \eqref{eq:tn3} and \eqref{eq:tn2}.
\end{proof}

		\subsection{Auxiliary lemmas -- Sliding blocks}
\label{subsec:psb}

Throughout this section, we assume that Condition \ref{cond:BerBuc}, \ref{cond2}(i) and \ref{cond3}(i) are met.

	\begin{lem} \label{lem:cfg-fidis2}
	For any $x_1,\ldots,x_m \in [0,\infty)$ and $m \in \N$, we have
	 \[
	 (e_n(x_1),\ldots,e_n(x_m),B_n^\slb)' 
			\wto (e(x_1),\ldots,e(x_m),B^\slb)', 
			\] where $ (e(x_1),\ldots,e(x_m),B^\slb)' \sim \Nor_{m+1} (0,\Sigma_{m+1}^\slb)$ with 
			\[
			\Sigma_{m+1}^\slb= 
			\begin{pmatrix}
		r(x_1,x_1) & \dots & r(x_1,x_m) & f(x_1) \\
		\vdots & \ddots& \vdots & \vdots \\
		r(x_m,x_1) & \dots & r(x_m,x_m) & f(x_m) \\
		f(x_1) & \dots & f(x_m) & 8 \log(2)-4
		\end{pmatrix}.
			\] 
			Here, the functions $r$ and $f$ are defined as in Lemma \ref{lem:cfg-fidis}.
	\end{lem}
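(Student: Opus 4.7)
The plan is to follow the strategy of Lemma~\ref{lem:cfg-fidis}, adapted to the sliding-block sum $B_n^{\slb}$. By the Cram\'er-Wold device, it suffices to establish joint asymptotic normality of arbitrary linear combinations $\lambda_0 B_n^{\slb} + \sum_{j=1}^m \lambda_j e_n(x_j)$ and then to identify the entries of the limiting covariance matrix $\Sigma_{m+1}^{\slb}$. The marginal convergence of $(e_n(x_j))_j$ and the entries $r(x_i, x_j)$ are inherited from Theorem~4.1 in \cite{Rob09}; moreover, the single-block covariance $\Cov(e_n(x_j), B_n^{\slb}) \to f(x_j)$ should coincide with the disjoint case, because by stationarity each of the $n-b_n+1$ sliding-block terms contributes the same covariance with the tail empirical process, and the resulting single-block computation is exactly the one carried out for $h(x_j)$ in Lemma~\ref{lem:cfg-fidis}.

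For joint asymptotic normality, I would reuse the big-block/small-block construction from Lemma~\ref{lem:cfg-fidis}: group sliding-block starting times $t$ according to the big block $I_i^+ = \{(i-1)b_n+1,\ldots,ib_n-\ell_n\}$ in which they lie, replace each $\log Z_{nt}^{\slb}$ by a version depending only on observations inside a suitably enlarged big block (with the remainder controlled by Condition~\ref{cond:BerBuc}(iii),(v) combined with Condition~\ref{cond2}(i)), and discard sliding blocks starting in small blocks. This reduces $B_n^{\slb}$, modulo $o_\Prob(1)$, to a sum of $k_n$ sufficiently separated and approximately independent big-block contributions. A standard characteristic-function argument using the $\alpha$-mixing Condition~\ref{cond:BerBuc}(iii), together with Ljapunov's condition verified via Minkowski's inequality and Condition~\ref{cond:BerBuc}(ii), \ref{cond2}(i), then delivers the Gaussian limit.

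The genuinely new ingredient is the identification of $\lim \Var(B_n^{\slb}) = 8\log(2)-4$. Using stationarity,
\[
\Var(B_n^{\slb}) = \frac{k_n}{(n-b_n+1)^2}\sum_{|\ell|<n-b_n+1}(n-b_n+1-|\ell|)\,\Cov\bigl(\log Z_{n,1}^{\slb},\log Z_{n,1+|\ell|}^{\slb}\bigr).
\]
The lags $|\ell|\ge b_n$ contribute negligibly by mixing, while for $\ell = \lfloor u b_n\rfloor$ with $u\in[0,1)$ the covariance converges to a limit $c(u)$ expressible through the joint two-level point-process limit in Condition~\ref{cond:BerBuc}(i). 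A Riemann-sum argument then yields $\lim \Var(B_n^{\slb}) = 2\int_0^1 c(u)\,du$. The main technical obstacle is the explicit evaluation of this integral to the clean value $8\log(2)-4$. The relevant joint distribution of two overlapping block-type maxima has an absolutely continuous part on $\{a\ne b\}$ together with a point mass on the diagonal corresponding to the extreme lying in the shared region, so the integrand $c(u)$ mixes $\log$- and $\exp$-type terms analogous to those handled in Lemma~\ref{lem:cfg-sigcon}; careful bookkeeping (with an intermediate integration by parts in the $u$-variable) should collapse them into the stated closed form. Notably, the final value is independent of $\theta$, mirroring the $\theta$-independence of $\pi^2/6 = \Var(\log \xi)$ in the disjoint case.
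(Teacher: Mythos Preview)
Your plan has two genuine gaps that are precisely the places where the sliding-block proof deviates from the disjoint one.

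\textbf{The blocking scheme.} Reusing the blocks $I_i^+=\{(i-1)b_n+1,\ldots,ib_n-\ell_n\}$ from Lemma~\ref{lem:cfg-fidis} does not yield the required near-independence. A sliding-block variable $\log Z_{nt}^{\slb}$ with $t\in I_i^+$ depends on $U_t,\ldots,U_{t+b_n-1}$; for $t$ near the right end of $I_i^+$ these observations reach deep into $I_{i+1}^+$, so the ``big-block contributions'' you construct are not separated at all, no matter how you ``suitably enlarge'' them without making them overlap. The paper addresses this by an extra layer of blocking: it groups $k_n^\ast\to\infty$ consecutive $I_i$-blocks into super-blocks $J_j^+$ of length $k_n^\ast b_n$, separated by small blocks $J_j^-$ of length $2b_n$. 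The $2b_n$-gap guarantees that sliding blocks starting in $J_j^+$ do not reach $J_{j+1}^+$, so mixing then yields approximate independence; the price is that one must verify Ljapunov's condition for the super-block sums, which requires the constraint $k_n^\ast=o(k_n^{\delta/(2(1+\delta))})$. The ensuing self-similarity ($W_{n1}^+=\lambda_1 e_{n^\ast}(x)+\lambda_2 B_{n^\ast}^{\slb}+o_\Prob(1)$ with $n^\ast=k_n^\ast b_n$) is what reduces the limiting variance to the covariance computations.

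\textbf{The cross-covariance.} Your claim that the sliding-block covariance with $e_n(x)$ ``is exactly the single-block computation for $h(x)$ in Lemma~\ref{lem:cfg-fidis}'' is not correct. After the usual reductions, the relevant object is $b_n^{-1}\sum_{t=1}^{2b_n}\Cov(\sum_{s\in I_2}\ind(U_s>1-x/b_n),\log Z_{nt}^{\slb})$, which---unlike the disjoint case---involves a block maximum shifted by a fraction $\xi=t/b_n$ relative to the block carrying the exceedance count. The paper shows this converges to $2\int_0^1 h_{\slb,x}(\xi)\,\mathrm{d}\xi-2x\varphi_{(\rm C)}(\theta)$ (Lemma~\ref{lem:cov-cfg1}(a)), and that the identity $2\int_0^1 h_{\slb,x}(\xi)\,\mathrm{d}\xi=h(x)+x\varphi_{(\rm C)}(\theta)$ holds (Lemma~\ref{lem:cov-cfg1}(b)); only the combination of the two gives $f(x)$. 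This identity is not a triviality---it relies on an explicit summation of the two-level cluster probabilities over the overlap parameter.

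Your treatment of $\Var(B_n^{\slb})$ is on target and matches the paper's approach: write it as $2\int_0^1 c(\xi)\,\mathrm{d}\xi$ via lags $\ell=\lfloor \xi b_n\rfloor$, identify the joint limit of $(\log Z_{n1}^{\slb},\log Z_{n,\lfloor \xi b_n\rfloor+1}^{\slb})$ through the survival function $G_\xi(x,y)=\exp\{-\theta[\xi(e^x\wedge e^y)+(e^x\vee e^y)]\}$, and evaluate the resulting double integral to $8\log(2)-4$.
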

	
	\begin{lem}\label{lem:cfg-process2}
		For any $m \in \N$, we have
		\[ 
		\left\{ (W_n(x),B_n^\slb)' \right\}_{x \in [1/m,m]}  
		\wto \left\{ \left( \frac{e(x)}{x},B^\slb \right)' \right\}_{x \in [1/m,m]} \ \textrm{ in } D([1/m,m]) \times \R, 
		\]
		where $(e,B^\slb)'$ is a centered Gaussian process with continuous sample paths and with covariance
		functional as specified in Lemma \ref{lem:cfg-fidis2}. 
	\end{lem}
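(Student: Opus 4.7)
The plan is to mimic the proof of Lemma \ref{lem:cfg-process} verbatim, replacing every occurrence of $B_n$ by its sliding blocks counterpart $B_n^\slb$ and replacing the appeal to Lemma \ref{lem:cfg-fidis} by the already available Lemma \ref{lem:cfg-fidis2}. No new analytic inputs are needed, since the $x$-indexed part of the process is the same $W_n(x)$ that appears in the disjoint blocks setting.

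First I would establish finite-dimensional convergence. Fix $x_1,\dots,x_m \in [1/m,m]$. By Lemma \ref{lem:cfg-fidis2},
\[
(e_n(x_1),\dots,e_n(x_m),B_n^\slb)' \wto (e(x_1),\dots,e(x_m),B^\slb)'.
\]
For each $j$, define the continuous maps $f_{n,j}:\R \to \R$ by
\[
f_{n,j}(z) = \sqrt{k_n}\log\!\left\{1 + \tfrac{1}{\sqrt{k_n}}\bigl(\tfrac{z}{x_j} + \tfrac{1}{\sqrt{k_n}\,x_j}\bigr)\right\},
\]
so that $W_n(x_j) = f_{n,j}(e_n(x_j))$. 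Since $f_{n,j}(z_n) \to z/x_j$ whenever $z_n \to z$, the extended continuous mapping theorem (Theorem 18.11 in \citealp{Van98}), applied jointly and using that the $(m{+}1)$-th coordinate is left invariant, yields
\[
(W_n(x_1),\dots,W_n(x_m),B_n^\slb)' \wto (e(x_1)/x_1,\dots,e(x_m)/x_m,B^\slb)',
\]
which is the desired fidi convergence.

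Next I would establish asymptotic tightness of the joint process on $[1/m,m]$. Asymptotic tightness of $\{W_n(x)\}_{x\in[1/m,m]}$ in $D([1/m,m])$ is identical to the disjoint blocks case and follows from asymptotic tightness of the tail empirical process $e_n$ on compact subsets of $(0,\infty)$, which is Theorem 4.1 in \cite{Rob09}, combined with the same continuous mapping argument used for the fidis (applied to processes). Asymptotic tightness of the scalar $B_n^\slb$ is immediate from its weak convergence, which is a consequence of Lemma \ref{lem:cfg-fidis2} applied with $m=0$ (or, equivalently, from the Cramér-Wold argument underlying Lemma \ref{lem:cfg-fidis2}). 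Joint asymptotic tightness of $(W_n(\cdot),B_n^\slb)'$ in $D([1/m,m]) \times \R$ then follows from marginal tightness by a routine adaptation of Lemma 1.4.3 in \cite{VanWel96}, exactly as in the proof of Lemma \ref{lem:cfg-process}.

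The combination of fidi convergence and joint asymptotic tightness yields the claimed weak convergence in $D([1/m,m]) \times \R$. I do not anticipate a genuine obstacle: all the heavy lifting has been moved into Lemma \ref{lem:cfg-fidis2} (which plays the role previously played by Lemma \ref{lem:cfg-fidis}), and into the weak convergence results for $e_n$ from \cite{Rob09}, which are independent of whether the ``scalar companion'' is built from disjoint or sliding blocks.
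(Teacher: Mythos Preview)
Your proposal is correct and matches the paper's own proof essentially verbatim: the paper simply states that ``up to notation, the proof is exactly the same as the one of Lemma~\ref{lem:cfg-process} in the disjoint blocks case,'' and your write-up spells out precisely that substitution (replacing $B_n$ by $B_n^\slb$ and Lemma~\ref{lem:cfg-fidis} by Lemma~\ref{lem:cfg-fidis2}).
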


		\begin{lem} \label{lem:cfg-EE'2}
		For any $m\in\N$, we have
		\[ 
		E_{n,m}^{\slb} = E_{n,m}'+ \op \quad \text{ as } n \to \infty, 
		\] 
		where 
		$
		E_{n,m}'= \int_{1/m}^{m} W_n(x) \theta e^{-\theta x} \ \mathrm{d}x
		$ is as in Lemma~\ref{lem:cfg-EE'}.
		\end{lem}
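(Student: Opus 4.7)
The plan is to mimic the proof of Lemma~\ref{lem:cfg-EE'} verbatim, only replacing the disjoint-blocks empirical c.d.f.\ $\hat H_{k_n}$ by its sliding-blocks counterpart $\hat H_n^{\slb}$. Writing $H(x)=1-e^{-\theta x}$ for the c.d.f.\ of the $\Ed(\theta)$-distribution, we have
\[
E_{n,m}^{\slb} - E_{n,m}' = \int_{1/m}^{m} W_n(x)\, \mathrm{d}(\hat H_n^{\slb}-H)(x),
\]
so the assertion will follow from (i) the uniform convergence $\sup_{x\in[1/m,m]} |\hat H_n^{\slb}(x)-H(x)| = o_\Prob(1)$, combined with (ii) the weak convergence of $W_n$ on $[1/m,m]$ to a process with continuous sample paths, via Lemma~C.8 in \cite{BerBuc17} and the continuous mapping theorem.

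Item (ii) is an immediate consequence of Lemma~\ref{lem:cfg-process2} (the marginal weak convergence of $W_n$ on $[1/m,m]$). For item (i), since both $\hat H_n^{\slb}$ and $H$ are non-decreasing and $H$ is continuous on the compact interval $[1/m,m]$, a Polya-type argument reduces the uniform convergence to pointwise convergence in probability. Fix $x \in [1/m,m]$. Stationarity gives
\[
\Exp\big[\hat H_n^{\slb}(x)\big] = \Prob(Z_{n1}^{\slb} \le x) \longrightarrow H(x)
\]
by \eqref{eq:zb}, so it remains to show $\Var\big(\hat H_n^{\slb}(x)\big) = o(1)$. Writing
\[
\Var\big(\hat H_n^{\slb}(x)\big) = \frac{1}{(n-b_n+1)^2} \sum_{s,t=1}^{n-b_n+1} \Cov\big(\I(Z_{ns}^{\slb}\le x),\I(Z_{nt}^{\slb}\le x)\big),
\]
we split the double sum according to whether $|s-t|\le b_n+\ell_n$ or not. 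The first group contains $O(n b_n)$ pairs, each contributing a covariance bounded by $1$; the second group is controlled by the alpha-mixing coefficients through Condition~\ref{cond:BerBuc}(iii). This yields a bound of the order $b_n/n + k_n\alpha_{c_2}(\ell_n)/(n-b_n+1) = o(1)$, exactly as in the analogous computation behind Lemma~9.2 of \cite{BerBuc18}.

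Given (i) and (ii), Lemma~C.8 of \cite{BerBuc17} applied to the bounded-variation integrators $\hat H_n^{\slb}-H$ and the weakly convergent integrands $W_n$ yields that
\[
\int_{1/m}^{m} W_n(x)\, \mathrm{d}(\hat H_n^{\slb}-H)(x) = o_\Prob(1),
\]
which is the claim. The main (and only) obstacle compared to the disjoint-blocks case is the variance estimate for $\hat H_n^{\slb}(x)$, where sliding blocks are no longer essentially independent; the standard mixing-plus-near-neighbour bookkeeping sketched above resolves it without requiring any new probabilistic input beyond what is already in Condition~\ref{cond:BerBuc}.
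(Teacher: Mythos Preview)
Your proposal is correct and follows essentially the same route as the paper: the paper's proof simply invokes the argument of Lemma~\ref{lem:cfg-EE'} together with the proof of Lemma~10.2 in \cite{BerBuc18}, the latter being precisely the sliding-blocks uniform convergence $\sup_{x\in[1/m,m]}|\hat H_n^{\slb}(x)-H(x)|=o_\Prob(1)$ that you establish directly via the Polya/variance argument. One minor remark: your displayed variance bound ``$b_n/n + k_n\alpha_{c_2}(\ell_n)/(n-b_n+1)$'' is not quite the right expression for the far-apart contribution (a crude uniform bound gives $O(\alpha_{c_2}(\ell_n))$, which is $o(1)$ by Condition~\ref{cond:BerBuc}(iii)), but the conclusion is of course unaffected.
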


	\begin{lem} \label{lem:cfg-enmbn2}
		For any $m \in \N$,  we have
		\[ 
		E_{n,m}^\slb + B_n^\slb \wto E_m'+B^\slb \sim \Nor(0,\tau_{\slb,m}^2) \ \textrm{ as } n \to \infty, 
		\] 
		where, with $r$ and $f$ defined as in Lemma~\ref{lem:cfg-fidis}, 
		\[ 
		\tau_{\slb,m}^2= \theta\ho{2} \int_{1/m}\ho{m} 
		\int_{1/m}\ho{m} r(x,y) \frac{1}{xy} e\ho{-\theta(x+y)} \ \mathrm{d}x \mathrm{d}y + 2 \theta \int_{1/m}\ho{m} f(x) \frac{1}{x} e\ho{-\theta x} \ \mathrm{d}x +  8\log(2)-4. \]
	\end{lem}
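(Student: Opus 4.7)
The plan is to mirror the proof of Lemma~\ref{lem:cfg-enmbn} step by step, substituting each disjoint blocks ingredient by its sliding blocks analogue. Since Lemmas~\ref{lem:cfg-EE'2}, \ref{lem:cfg-process2} and \ref{lem:cfg-fidis2} are the sliding counterparts of Lemmas~\ref{lem:cfg-EE'}, \ref{lem:cfg-process} and \ref{lem:cfg-fidis}, the argument is almost identical; only the variance contributed by the $\log$-term changes because the sliding blocks aggregate shares overlapping maxima.

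Concretely, I would first invoke Lemma~\ref{lem:cfg-EE'2} to replace $E_{n,m}^\slb$ by the deterministic-kernel integral $E_{n,m}'=\int_{1/m}^{m} W_n(x)\,\theta e^{-\theta x}\diff x$ up to a $o_\Prob(1)$ term. Next, Lemma~\ref{lem:cfg-process2} delivers the joint weak convergence
\[
\left\{(W_n(x),B_n^\slb)\right\}_{x\in[1/m,m]}
\wto \left\{(e(x)/x,B^\slb)\right\}_{x\in[1/m,m]}
\quad \text{in } D([1/m,m])\times\R,
\]
and the continuous mapping theorem applied to the map $(w,b)\mapsto \int_{1/m}^m w(x)\,\theta e^{-\theta x}\diff x+b$ (which is continuous with respect to the uniform topology on $D([1/m,m])$) yields
\[
E_{n,m}^\slb+B_n^\slb \wto \int_{1/m}^m \frac{e(x)}{x}\,\theta e^{-\theta x}\diff x + B^\slb =: E_m'+B^\slb.
\]

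The limit is a centered Gaussian random variable as a continuous linear functional of the Gaussian limit process. Its variance is computed via Fubini together with the covariance identities from Lemma~\ref{lem:cfg-fidis2}: namely $\Cov(e(x),e(y))=r(x,y)$, $\Cov(e(x),B^\slb)=f(x)$ and $\Var(B^\slb)=8\log(2)-4$. Expanding
\[
\tau_{\slb,m}^2
=\Var\!\left(\int_{1/m}^m \frac{e(x)}{x}\theta e^{-\theta x}\diff x\right)
+2\Cov\!\left(\int_{1/m}^m \frac{e(x)}{x}\theta e^{-\theta x}\diff x,\,B^\slb\right)
+\Var(B^\slb)
\]
immediately reproduces the claimed formula.

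The only part of the argument that genuinely differs from the disjoint case is encapsulated in Lemma~\ref{lem:cfg-fidis2}, through the value $\Var(B^\slb)=8\log(2)-4$; the cross-covariance $f(x)$ between $e_n$ and the log-aggregate is unchanged because it is determined by the asymptotics of a single block. Consequently, the main obstacle lies upstream, not in the present lemma: verifying the $8\log(2)-4$ constant for the sliding-blocks variance requires evaluating $\lim_{n\to\infty}\Var\{(n-b_n+1)^{-1/2}\sum_t \log Z_{nt}^\slb\}$, i.e., computing $\int_0^1\int_0^1 \Cov(\log\xi_1,\log\xi_2)\diff u\diff v$-type double integrals driven by the overlapping-block structure. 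Once that input is in place, the present lemma itself is a routine continuous-mapping plus Fubini computation.
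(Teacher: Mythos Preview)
Your proposal is correct and follows exactly the same route as the paper: replace $E_{n,m}^\slb$ by $E_{n,m}'$ via Lemma~\ref{lem:cfg-EE'2}, apply the continuous mapping theorem to the joint weak convergence in Lemma~\ref{lem:cfg-process2}, and expand the variance using the covariance structure from Lemma~\ref{lem:cfg-fidis2}. Indeed, the paper's proof consists of a single sentence stating that it is identical to the proof of Lemma~\ref{lem:cfg-enmbn} up to notation.
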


	\begin{lem} \label{lem:cfg-sigcon2}
		As $m \to \infty$, $\tau_{\slb,m}^2 \to \sigma_{\slb, (\rm C)}^2/\theta^2$, where $ \sigma_{\slb, (\rm C)}^2$ is specified in Theorem~\ref{theo:cfg}.
		\end{lem}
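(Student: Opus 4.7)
The plan is to exploit the fact that $\tau^2_{\slb,m}$ and $\tau^2_m$ (from Lemma~\ref{lem:cfg-sigcon}) have exactly the same integral contributions, differing only in the constant arising from the variance of the Brownian part $B$ versus $B^\slb$. First, I would read off from the definitions of $\tau_m^2$ in Lemma~\ref{lem:cfg-enmbn} and $\tau_{\slb,m}^2$ in Lemma~\ref{lem:cfg-enmbn2} that
\[
\tau_{\slb,m}^2 - \tau_m^2 = \big(8\log 2 - 4\big) - \frac{\pi^2}{6},
\]
a constant independent of $m$. In particular, $\lim_{m\to\infty}\tau_{\slb,m}^2$ exists iff $\lim_{m\to\infty}\tau_m^2$ does, and in that case
\[
\lim_{m\to\infty}\tau_{\slb,m}^2 = \lim_{m\to\infty}\tau_m^2 - \Big\{\tfrac{\pi^2}{6} - 8\log 2 + 4\Big\}.
\]

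Next, I would invoke Lemma~\ref{lem:cfg-sigcon}, which gives $\lim_{m\to\infty}\tau_m^2 = \sigma_{\djb,\rm C}^2/\theta^2$. Substituting yields
\[
\lim_{m\to\infty}\tau_{\slb,m}^2 = \frac{\sigma_{\djb,\rm C}^2}{\theta^2} - \Big\{\tfrac{\pi^2}{6} - 8\log 2 + 4\Big\}.
\]

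Finally, I would compare this with the statement of Theorem~\ref{theo:cfg}, where
\[
\sigma_{\slb,\rm C}^2 = \sigma_{\djb,\rm C}^2 - \Big\{\tfrac{\pi^2}{6} - 8\log 2 + 4\Big\}\theta^2,
\]
so that dividing by $\theta^2$ gives exactly the right-hand side of the previous display. Hence $\lim_{m\to\infty}\tau_{\slb,m}^2 = \sigma_{\slb,\rm C}^2/\theta^2$, which is the claim.

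There is no genuine obstacle here: all the analytic work (convergence of the double and single integrals, evaluation of the constant part in the disjoint case) has already been carried out in Lemma~\ref{lem:cfg-sigcon}, and the sliding version inherits the same integrands because the shared Gaussian process $e$ has the same covariance function in both Lemma~\ref{lem:cfg-fidis} and Lemma~\ref{lem:cfg-fidis2}, and the cross-covariance between $e(x)$ and the Brownian contribution (the function $f$) is likewise unchanged. The only piece that requires any checking is therefore the arithmetic identity relating the two constants, i.e.\ verifying that the modification $\pi^2/6 \rightsquigarrow 8\log 2 - 4$ in the variance of the Brownian term matches precisely the correction $-\{\pi^2/6 - 8\log 2 + 4\}\theta^2$ in the asymptotic variance of the sliding-blocks CFG estimator. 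This is immediate by inspection.
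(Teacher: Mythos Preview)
Your proposal is correct and follows essentially the same approach as the paper's own proof: observe that $\tau_{\slb,m}^2 = \tau_m^2 - \pi^2/6 + 8\log 2 - 4$ from the definitions in Lemmas~\ref{lem:cfg-enmbn} and~\ref{lem:cfg-enmbn2}, invoke Lemma~\ref{lem:cfg-sigcon} for the limit of $\tau_m^2$, and then identify the result with $\sigma_{\slb,\rm C}^2/\theta^2$ via the formula in Theorem~\ref{theo:cfg}.
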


	\begin{lem} \label{lem:cfg-limsup2}
		If in addition, Condition \ref{cond_CFG} holds, then, for all $\delta > 0$, 
		\[ 
		\lim_{m \to \infty} \limsup_{n \to \infty} P \! \left( |E_{n,m}^{\slb}-E_n^\slb | > \delta \right) = 0. 
		\]
	\end{lem}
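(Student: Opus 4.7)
The proof runs in parallel with the one of Lemma~\ref{lem:cfg-limsup}, with the disjoint blocks empirical c.d.f.\ $\hat H_{k_n}$ replaced throughout by its sliding blocks counterpart $\hat H_n^{\slb}$. By Lemma~\ref{lem:cfg-EE'2}, it suffices to prove the analogous bound with $E_{n,m}^\slb$ replaced by the deterministically-integrated $E_{n,m}'$. Splitting $|E_{n,m}^\slb - E_n^\slb| \le V_{n1}^\slb + V_{n2}^\slb$, where $V_{n1}^\slb$ and $V_{n2}^\slb$ collect the integrands supported on $(0,1/m]$ and $[m,\infty)$ respectively, one controls each piece separately in the iterated limit $\lim_{m\to\infty}\limsup_{n\to\infty}$.

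For the tail part $V_{n2}^\slb$, introduce $\tilde e_n = e_n + k_n^{-1/2}$. The uniform-in-$x$ bound on $e_n(x)/x$ derived just below Condition~\ref{cond_CFG} applies verbatim to sliding block maxima and gives
\[
\max_{t:\,Z_{nt}^\slb \ge c}\left|\frac{\tilde e_n(Z_{nt}^\slb)}{Z_{nt}^\slb\sqrt{k_n}}\right| = o_\Prob(1).
\]
On the corresponding high-probability event, together with the event $\{\min_t N_{nt}^\slb > 1-\varepsilon/2\}$ (whose probability tends to one by Condition~\ref{cond:BerBuc}(v), since every sliding block of length $b_n$ contains a half-block of length $b_n/2$), expand $\log(1+a)=\int_0^1 a/(1+sa)\,\mathrm{d}s$ and use $\tilde e_n(x) = k_n^{-1/2}\{1 + \sum_s f(U_s,x)\}$ with $f$ as in~\eqref{eq:fuz} to obtain
\[
|V_{n2}^\slb| \lesssim \frac{1}{m\sqrt{k_n}}\,\frac{1}{n-b_n+1}\sum_{t=1}^{n-b_n+1}\ind\bigl(\varepsilon b_n/2 > Z_{nt}^\slb \ge m\bigr)\,\Bigl|1 + \sum_{s=1}^n f(U_s,Z_{nt}^\slb)\Bigr|.
\]
By stationarity, the expectation of the right-hand side collapses to $\frac{1}{m\sqrt{k_n}}\Exp[\ind(\varepsilon b_n/2 > Z_{n1}\ge m)|1+\sum_s f(U_s,Z_{n1})|]$, which is exactly the quantity controlled in the proof of the disjoint-blocks Lemma~9.1 of \cite{BerBuc18} via the variance bound in Condition~\ref{cond:BerBuc}(iv).

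For the head part $V_{n1}^\slb$, split $V_{n1}^\slb \le T_{n1}^\slb + T_{n2}^\slb + T_{n3}^\slb$ at the thresholds $dk_n^{-1}$ and $dk_n^{-\mu}$, with $d>0$ small and $\mu=\mu_d \in (1/2,1/(2(1-\tau)))$ from Condition~\ref{cond_ewerte}. The event $\{T_{n1}^\slb > 0\} \subset \{\min_t Z_{nt}^\slb \le dk_n^{-1}\}$ coincides with $\{Z_{1:n}\le d\}$, since the sliding blocks jointly cover $\{1,\ldots,n\}$; its probability converges to $1-e^{-d\theta}$, which is arbitrarily small for small $d$. For $T_{n3}^\slb$, Condition~\ref{cond_gewKonv} forces $|\tilde e_n(x)/(x\sqrt{k_n})|$ to be uniformly small on $(dk_n^{-\mu},1/m]$ with high probability, so that $|\log(1+a)|\le 2|a|$ yields
\[
T_{n3}^\slb \le 2\int_{(dk_n^{-\mu},1/m]} \Bigl|\frac{\tilde e_n(x)}{x^\tau}\Bigr|\frac{1}{x^{1-\tau}}\,\mathrm{d}\hat H_n^\slb(x) + o_\Prob(1);
\]
uniform convergence of $\hat H_n^\slb$ to $H$ on compacts together with Condition~\ref{cond_gewKonv} and the continuous mapping theorem produces a weak limit that vanishes as $m\to\infty$. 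Finally, stationarity yields $\Prob(Z_{n1}^\slb < dk_n^{-\mu}) = \Prob(Z_{n1} < dk_n^{-\mu})$, so Markov's inequality combined with Conditions~\ref{cond_ewerte} and~\ref{cond:bnkn2} gives
\[
\Exp[T_{n2}^\slb] \lesssim \log(n)\sqrt{k_n}\,\Prob(Z_{n1}<dk_n^{-\mu}) = O\bigl(\log(k_n)\,k_n^{1/2-\mu}\bigr) = o(1).
\]

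The main subtlety lies in the reduction for $V_{n2}^\slb$: despite heavy overlap between sliding blocks, the sum-of-indicators structure means the expectation bound reduces by stationarity to the single-block moment bound already proven in the disjoint setting, so no new mixing argument is required; likewise, the covering property of sliding blocks (used for $T_{n1}^\slb$) neatly converts the minimum-sliding-block event into an event about $Z_{1:n}$, bypassing any union-bound losses.
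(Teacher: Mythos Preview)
Your argument is correct and mirrors the paper's decomposition into $V_{n1}^{\slb}$ and $V_{n2}^{\slb}$. The one imprecision concerns the $V_{n2}^{\slb}$ step: the claim that ``by stationarity, the expectation of the right-hand side collapses to'' the single term with $t=1$ is not literally true. After shifting by $t-1$, the inner sum $\sum_{s=1}^{n} f(U_s, Z_{nt}^{\slb})$ becomes $\sum_{s \in J_t} f(U_s, Z_{n1})$ with $J_t = \{2-t, \dots, n-t+1\}$, so the expectations depend on $t$ through the position of the block $\{1,\dots,b_n\}$ inside $J_t$. What does hold is that every term admits the \emph{same upper bound}, because the moment estimates behind Lemma~9.1 of \cite{BerBuc18} only use $|J_t|=n$ together with Condition~\ref{cond:BerBuc}(iv). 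The paper sidesteps this by invoking the sliding-blocks analogue, Lemma~10.1 of \cite{BerBuc18}, directly; with the small fix above your shortcut via Lemma~9.1 is equally valid. Your treatment of $V_{n1}^{\slb}$ is correct and in fact more explicit than the paper's, which simply refers back to the disjoint case.
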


\begin{proof}[Proof of Lemma \ref{lem:cfg-fidis2}] 
As in the proof of Lemma \ref{lem:cfg-fidis} we only show joint weak convergence of $(e_n(x),B_n\ho{\slb})$ for some fixed $x>0$;
the general case can be shown analogously.
For given $\varepsilon \in (0,c_1 \wedge c_2)$ let $A_n' = \{\min_{t=1,\ldots,n-b_n+1} N_{nt}^{\slb} > 1- \varepsilon\}$, 
such that $\Pro(A_n) \to 1$ by Condition \ref{cond:BerBuc}(v). 
By the Cram\'{e}r-Wold device, it suffices to prove weak convergence of
\begin{multline*}
	\lambda_1 e_n(x) + \lambda_2 B_{n}^{\slb} 
	= \sum_{j=1}^{k_n-1} \sum_{s \in I_j} \Big[ \frac{\lambda_1}{\sqrt{k_n}} \Big\{ \iu - \frac{x}{b_n} \Big\} \\
	+ \frac{\lambda_2 \sqrt{k_n}}{n-b_n+1} \Big\{ \log(Z_{ns}^{\slb})-E[\log(Z_{ns}^{\slb})] \Big\}  \Big] + \op, \nonumber
\end{multline*}
 for some arbitrary $\lambda_1,\lambda_2 \in \R$, where the negligible term stems from omitting a negligible number of summands. 
 
We are going to apply a big block-small block argument, based on a suitable `blocking of blocks' to take care of the serial dependence introduced through the use of sliding blocks. For that purpose, let $k_n\ho{\ast} < k_n$ be an integer sequence with $k_n\ho{\ast} \to \infty$ and $k_n\ho{\ast} = 
o(k_n\ho{\scs \delta/(2(1+\delta))})$,
where $\delta$ is from Condition \ref{cond:BerBuc}(ii). For $q_n\ho{\ast} = \lfloor k_n/(k_n\ho{\ast}+2)\rfloor$ and 
$j=1,\ldots,q_n\ho{\ast}$, define
\[ J_j\ho{+} = \bigcup_{i=(j-1)(k_n\ho{\ast}+2)+1}\ho{j(k_n\ho{\ast}+2)-2} I_i \ \ \textrm{ and } \ \ 
J_j\ho{-} = I_{j(k_n\ho{\ast}+2)-1} \cup I_{j(k_n\ho{\ast}+2)}. \]
Thus we have $q_n\ho{\ast}$ big blocks $J_j\ho{+}$ of size $k_n\ho{\ast}b_n$, which are separated by a small block 
$J_j\ho{-}$ of size $2b_n$, just as in the construction in the proof of Lemma 10.3  in \cite{BerBuc18}.
Consequently, we have $\lambda_1 e_n(x) + \lambda_2 B_{n}^{\slb} = L_n^{+}+ L_n^{-}+ \op$, where 
\[
L_n^{\pm} = \frac{1}{\sqrt{q_n^{\ast}}} \sum_{j=1}^{q_n^{\ast}} W_{nj}^{\pm} 
\] 
with 
\[ 
W_{nj}^{\pm} = \sqrt{\frac{q_n^{\ast}}{k_n}} \sum_{s \in J_j^{\pm}} \lambda_1 \Big\{ \iu - \frac{x}{b_n}
\Big\} + \frac{\lambda_2 n}{n-b_n+1} \frac{1}{b_n} \Big\{ \log(Z_{ns}^{\slb})-E[\log(Z_{ns}^{\slb})] \Big\} 
\] 
for $j=1,\ldots,q_n^{\ast}$. In the following, we show that, on the one hand, $L_n^-\I_{A_n'} = \op$ and that, on
the other hand, $L_n^+ \I_{A_n'}$ converges to the claimed normal distribution. First, we cover $L_n^- \I_{A_n'}$. 
As in the proof of Lemma \ref{lem:cfg-fidis}, we have 
\[
Z_{ns}^{\slb} = b_n \Big( 1- \max_{t=s,\ldots,s+b_n-1}U_t \Big) = b_n 
\Big( 1- \max_{t=s,\ldots,s+b_n-1}U_t^{\varepsilon} \Big) =: Z_{ns}^{\varepsilon,\slb} 
\] 
on the event $A_n'$, where $U_t^{\varepsilon} =U_t \I(U_t >1- \varepsilon)$. Hence, we can write $L_n^- \I_{A_n'} = \tilde{L}_n^- \I_{A_n'} 
+ \op = \tilde{L}_n^- + \op$ with 
\[ 
\tilde{L}_n^- = \frac{1}{\sqrt{q_n^{\ast}}} \sum_{j=1}^{q_n^{\ast}} 
W_{nj}^{\varepsilon -}, 
\]
where 
\[ 
W_{nj}^{\varepsilon -} =  \sqrt{\frac{q_n^{\ast}}{k_n}} \sum_{s \in J_j^{-}} \lambda_1 \Big\{ \iue - \frac{x}{b_n} \Big\} + \frac{\lambda_2 n}{n-b_n+1} \frac{1}{b_n} \Big\{ \log(Z_{ns}^{\varepsilon,\slb})-E[\log(Z_{ns}^{\varepsilon,\slb})] \Big\}.
\]
We proceed by showing that $\Var[\tilde{L}_n^-] = o(1)$. By stationarity, one has
\begin{align}
	\Var[\tilde{L}_n^-] 
	&= \Var[W_{n1}^{\varepsilon -}] + \frac{2}{q_n^{\ast}} \sum_{j=1}^{q_n^{\ast}} (q_n^{\ast}-j) 
	\Cov\big(W_{n1}^{\varepsilon -},W_{n,j+1}^{\varepsilon -}\big), \nonumber
\end{align}
which is bounded by $3 \Var[W_{n1}^{\varepsilon -}] + 2 \sum_{j=2}^{q_n^{\ast}} |\Cov\big(W_{n1}^{\varepsilon -},W_{n,j+1}^{\varepsilon -}\big)|$
in absolute value.
First, we show $\Var[W_{n1}\ho{\varepsilon -}]=o(1)$, for which it suffices to show that $||W_{n1}\ho{\varepsilon -}||_p=o(1)$ for some $p \in (2,2+\delta)$.
By Minkowski's inequality, one has 
\begin{align} \label{eq:wn1}
 ||W_{n1}\ho{\varepsilon -}||_p 
 & \leq 
 2 \sqrt{\frac{q_n\ho{\ast}}{k_n}}\left[ |\lambda_1|\ ||N_{b_n}\ho{(x)}(E)||_p + |\lambda_2|
 \ ||\log(Z_{n1}\ho{\varepsilon,\slb}) -\Exp[ \log(Z_{n1}\ho{\varepsilon,\slb})]||_p \right] \\
 &=O(\sqrt{q_n^*/k_n}) = o(1)  \nonumber 
\end{align}
by Condition \ref{cond:BerBuc}(ii) and \ref{cond2}(i).
It remains to treat the sum over the covariances. Since $W_{nj}\ho{\varepsilon -}$ is $\mathcal{B}\ho{\scs \varepsilon}_{ \{(j(k_n\ho{\ast}+2)-2)b_n+1\}:\{ j(k_n\ho{\ast} +2 )b_n \} }$- measurable, we may apply Lemma 3.11
in \cite{DehPhi02} to obtain
\begin{align}
 |\Cov(W_{n1}\ho{\varepsilon -},W_{n,j+1}\ho{\varepsilon -})| 
 & \leq 10 \ ||W_{n1}\ho{\varepsilon -}||_p\ho{2} \ \alpha_{c_2} (jk_n\ho{\ast} b_n)\ho{1-2/p}. \nonumber
\end{align}
By Condition \ref{cond:BerBuc}(iii), the sum $\sum_{j=2}\ho{q_n\ho{\ast}} \alpha_{c_2} 
(jk_n\ho{\ast}b_n)\ho{1-2/p}$ converges to zero, hence $||W_{n1}\ho{\varepsilon -}||_p=o(1)$ as asserted. 

Let us now treat the term $L_n\ho{+}\I_{A_n'}$ and show weak convergence to the asserted normal distribution. 
One can write 
\[ 
L_n^+ \I_{A_n'} = \frac{1}{\sqrt{q_n^{\ast}}} \sum_{j=1} ^{q_n^{\ast}} \tilde{W}_{nj}^{+} + \op,  \qquad \tilde{W}_{nj}^{+} = W_{nj}^{+} \I\big( \max_{t \in J_j^{+}} Z_{nt}^{\slb} < \varepsilon b_n \big).
\] 
A standard argument based on characteristic functions shows that the weak limit of ${q_n^{\ast}}^{\scs -1/2} \sum_{j=1} ^{q_n^{\ast}} \tilde{W}_{nj}^{+}$ is the same as if the summands were independent. By arguments as before, we may also pass back to an independent sample  ${W}_{nj}^{+}$, $j=1, \dots, q_n^\ast$. The assertion then follows from Ljapunov's central limit theorem, once we have shown the Ljapunov condition.

For that  purpose, note that
$
||W_{nj}\ho{+}||_{2+\delta}  = O(\sqrt{q_n^* k_n}) = O(\sqrt {k_n^*})
$ by similar arguments as in \eqref{eq:wn1}
such that $E[|W_{nj}\ho{+}|\ho{2+\delta}] =O({k_n^*}^{\scs (2+\delta)/2})$.
As a consequence, 
\[ 
\frac{\sum_{j=1}\ho{q_n\ho{\ast}}E[|W_{nj}\ho{+}|\ho{2+\delta}]}
{\Var\left[ \sum_{j=1}\ho{q_n\ho{\ast}} W_{nj}\ho{+} \right]\ho{\frac{2+\delta}{2}} } 
 ={q_n\ho{\ast}}^{ - \frac{\delta}{2}}  \frac{ E[|W_{n1}\ho{+}|\ho{2+\delta}] }{E[|W_{n1}\ho{+}|\ho{2}]\ho{\frac{2+\delta}{2}}} 
 = O(k_n\ho{-\delta/2} {k_n\ho{\ast}}^{1+\delta}) = o(1), \]
since $k_n\ho{\ast} = o(k_n\ho{\delta/(2(1+\delta))})$ by construction and provided that the limit of $E[|W_{n1}\ho{+}|\ho{2}]$ exists. If it does, we can conclude 
that $L_n\ho{+} \wto \Nor(0,\lim_{n\to \infty}E[|W_{n1}\ho{+}|\ho{2}])$. 
and it suffices to show that
\[ 
\lim_{n \to \infty} E[|W_{n1}\ho{+}|\ho{2}] = \lambda_1\ho{2} r(x,x) + 2 \lambda_1 \lambda_2 f(x) + 
\lambda_2\ho{2} \{8 \log(2) -4\}.  
\]
To this, note that $W_{n1}\ho{+} = \lambda_1 
e_{n\ho{\ast}}(x) + \lambda_2 B_{n\ho{\ast}}\ho{\slb} + \op$, where $e_{n\ho{\ast}}$ and $B_{n\ho{\ast}}\ho{\slb}$ are defined
as $e_n$ and $B_n\ho{\slb}$ with $n$ replaced by $n\ho{\ast} = k_n\ho{\ast}b_n$ and $k_n$ by $k_n\ho{\ast}$; and 
our general conditions still hold with this replacement. The result follows from Lemma \ref{lem:cov-cfg1} and Lemma~\ref{lem:varcfg1}  and the proof of Theorem 4.1 in \cite{Rob09}.
\end{proof}

\begin{proof}[Proof of Lemma~\ref{lem:cfg-process2}] Up to notation, the proof is exactly the same as the one of Lemma~\ref{lem:cfg-process} in the disjoint blocks case.    
\end{proof}

	\begin{proof}[Proof of Lemma \ref{lem:cfg-EE'2}] 
The result follows immediately from the argument in the proof of Lemma \ref{lem:cfg-EE'} and the proof of Lemma 10.2 in \cite{BerBuc18}.
\end{proof}

    \begin{proof}[Proof of Lemma \ref{lem:cfg-enmbn2}] Up to notation, the proof is exactly the same as the one of Lemma~\ref{lem:cfg-enmbn} in the disjoint blocks case.    
\end{proof}

    \begin{proof}[Proof of Lemma \ref{lem:cfg-sigcon2}] 
    By the definition of $\tau_m^2$ and $\tau_{\slb,m}\ho{2}$ in Lemma \ref{lem:cfg-enmbn} and \ref{lem:cfg-enmbn2}, we have
        \begin{align*}
        \tau_{\slb, m}\ho{2} = \tau_m^{2} - \pi^2/6 + 8 \log(2) -4.
        \end{align*}
        Hence, by the proof of Lemma~\ref{lem:cfg-sigcon} and the definition of $\sigma_{\slb, \rm C}^2$ in Theorem~\ref{theo:cfg}, 
        \[
        \lim_{m\to\infty}\tau_{\slb, m}\ho{2} =  \sigma_{\djb, \rm C}^2 / \theta^2 - \pi^2/6 + 8 \log 2 - 4 =  \sigma_{\slb, \rm C}^2 / \theta^2. \qedhere         
        \]
\end{proof}

     \begin{proof}[Proof of Lemma \ref{lem:cfg-limsup2}] 
     The proof is similar to the one of Lemma~\ref{lem:cfg-limsup}, which is why we keep it short. Write $|E_{n,m}\ho{\slb}-E_n\ho{\slb}| \leq |V_{n1}|+|V_{n2}|$ with 
     \begin{align}
      V_{n1} &= \intnu \log \bigg( 1+ \frac{\tilde{e}_n(x)}{x \sqrt{k_n}} \bigg)
      \sqrt{k_n} \ \I_{(0,1/m]}(x) \ \mathrm{d}\hat{H}_{n}\ho{\slb}(x), \nonumber\\
      V_{n2} &= \intnu \log \bigg( 1+ \frac{\tilde{e}_n(x)}{x \sqrt{k_n}} \bigg)
      \sqrt{k_n} \ \I_{[m,\infty)}(x) \ \mathrm{d}\hat{H}_{n}\ho{\slb}(x), \nonumber
     \end{align}
     where $\tilde{e}_n(x) = e_n(x)+k_n\ho{-1/2}$. For some $\varepsilon >0$ define the event 
     \[ 
     B_n = \Big\{ \max_{Z_{ni}\ho{\slb} \geq m} 
     \Big| \frac{\tilde{e}_n(Z_{ni}\ho{\slb})}{Z_{ni}\ho{\slb} \sqrt{k_n}} \Big| \leq \varepsilon \Big\}, \]
     such that $\Pro(B_n) \to 1$ by Condition \ref{cond_max}. As in the proof of Lemma \ref{lem:cfg-limsup}, with $f$ defined in \eqref{eq:fuz},
     we can write 
     \begin{multline*}
     V_{n2} \I_{B_n} 
     = k_n\ho{-3/2} \sum_{i=1}\ho{k_n-1} \sum_{w \in I_i} 
     \frac{1}{Z_{nw}\ho{\slb}} \I(Z_{nw}\ho{\slb} \geq m)
     \intne \frac{1}{1+s \ \frac{\tilde{e}_n(Z_{nw}\ho{\slb})}{Z_{nw}\ho{\slb} \sqrt{k_n}}} \ \mathrm{d}s  \\
     \times  b_n\ho{-1} 
     \Big\{ \sum_{j=1}\ho{k_n} \sum_{t \in I_j} f(U_t,Z_{nw}\ho{\slb})+1 \Big\} \, \I_{B_n} + \op \nonumber.
     \end{multline*}
     By Condition \ref{cond:BerBuc}(v), $\Pro(C_n) \to 1$ where $C_n = \big\{ \min_{i=1,\ldots,n-b_n+1} N_{ni}^\slb >1-\varepsilon/2 \big\}$. Hence, $V_{n2} \I_{B_n} = \bar{V}_{n2} \I_{B_n} \I_{C_n} + \op$, where 
     \begin{multline*}
     \bar{V}_{n2} = k_n\ho{-3/2} \sum_{i=1}\ho{k_n-1} \sum_{w \in I_i} 
     \frac{1}{Z_{nw}\ho{\slb}} \I(\varepsilon b_n/2 >Z_{nw}\ho{\slb} \geq m)
     \intne \frac{1}{1+s \ \frac{\tilde{e}_n(Z_{nw}\ho{\slb})}{Z_{nw}\ho{\slb} \sqrt{k_n}}} \ \mathrm{d}s \\
     \times  b_n\ho{-1} 
     \Big\{ \sum_{j=1}\ho{k_n} \sum_{t \in I_j} f(U_t,Z_{nw}\ho{\slb})+1 \Big\}, 
     \end{multline*}
     such that $\bar{V}_{n2}$ can be bounded as in the proof of Lemma \ref{lem:cfg-limsup} as follows
     \begin{align}
      |\bar{V}_{n2} \I_{B_n} | & \leq \frac{1}{m} \frac{1}{1-\varepsilon} k_n\ho{-3/2} \sum_{i=1}\ho{k_n-1} \sum_{ w \in I_i}
      \I(\varepsilon b_n/2 > Z_{nw}\ho{\slb} \geq m) \ b_n\ho{-1} \Big| 
      \sum_{j=1}\ho{k_n} \sum_{t \in I_j} f(U_t,Z_{nw}\ho{\slb}) \Big| + \op. \nonumber
     \end{align}
     This expression can be handled as in the proof of Lemma 10.1 in \cite{BerBuc18}, such that 
     \[ \lim_{m \to \infty} \limsup_{n \to \infty} \Pro (| \bar{V}_{n2} \I_{B_n} \I_{C_n} |>\delta)=0. \]
     The remaining term $|V_{n1}|$ can be treated analogously to the eponymous term in the proof of Lemma \ref{lem:cfg-limsup}.
\end{proof}

	\begin{lem} \label{lem:cov-cfg1} 		
	(a) For $x \geq 0$, as $n \to \infty$, 
	\[ \Cov(e_n(x),B_n^{\slb}) \to 2 \int_{0}^{1} h_{\slb,x}(\xi) \ 
	\mathrm{d}\xi - 2 x \varphi_{(\rm C)}(\theta), \] where 
	\begin{align*}
		h_{\slb,x}(\xi) = & \sum_{i=1}^{\infty} i  \int_{0}^{\infty} \I(y \leq \log(x)) \ \sum_{l=0}^{i} p^{(\xi x)}(l) \ p_2^{((1-\xi)x,(1-\xi)e^y)}(i-l,0) \ e^{-\theta \xi e^y} \\
		& \hspace{3cm} + \I(y > \log(x)) \ p^{(\xi x)}(i) \ e^{-\theta e^y} \ \mathrm{d}y \\ \nonumber
		- & \sum_{i=1}^{\infty} i \int_{- \infty}^{0} p^{(x)}(i) - \I(y \leq \log(x)) \sum_{l=0}^{i} p^{(\xi x)}(l) \ p_2^{((1-\xi)x,(1-\xi)e^y)}(i-l,0) \ e^{- \theta \xi e^y} \\ \nonumber
		& \ \ \ \ \ \ \ \ \ \ \ \ \ \ \ \ \ \ \ \   -\I(y > \log(x)) \ p^{(\xi x)}(i) \ e^{- \theta e^y} \ \mathrm{d}y. \nonumber
	\end{align*}
	
	\smallskip
	\noindent
	(b) We have
        \[
        2\int_{0}\ho{1} h_{\slb,x}(\xi) \ \mathrm{d}\xi  = h(x)  + x \varphi_{\rm (C)}(\theta),
        \] 
        where $h$ is defined in Lemma \ref{lem:cfg-fidis}.
\end{lem}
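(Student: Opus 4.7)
For part (a), the plan is to expand the covariance bilinearly as
\[
\Cov(e_n(x), B_n^{\slb}) = \frac{1}{n-b_n+1} \sum_{s=1}^n \sum_{t=1}^{n-b_n+1} \Cov\bigl(\ind(U_s > 1 - x/b_n),\; \log(Z_{nt}^{\slb})\bigr),
\]
and to show that only pairs with $s \in [t, t+b_n-1]$ contribute non-negligibly, with pairs further apart being controlled by the mixing hypothesis in Condition~\ref{cond:BerBuc}(iii) together with the $(2+\delta)$-moment bound on $\log Z_{n1}^{\slb}$ from Condition~\ref{cond2}(i). By stationarity the remaining double sum reduces to
\[
\sum_{u=0}^{b_n-1} \Cov\bigl(\ind(U_{u+1} > 1 - x/b_n),\; \log(Z_{n1}^{\slb})\bigr) + o(1).
\]

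To identify the limit of each summand I would use the representation $\log Z = \intnu \ind(Z > e^y)\,dy - \intmn \ind(Z \le e^y)\,dy$ and Fubini, reducing to joint probabilities of the form $\Prob(U_{u+1} > 1-x/b_n,\, N_{n1}^{\slb} < 1-e^y/b_n)$ minus their marginal products. For $y > \log x$ the joint event is empty because $U_{u+1}$, lying in the block and exceeding $1-x/b_n$, forces $N_{n1}^{\slb} \ge 1-x/b_n > 1-e^y/b_n$. For $y \le \log x$, split the block $[1, b_n]$ at position $u+1$ into a left sub-block of size $u \sim \xi b_n$ and a right sub-block of size $b_n-u-1 \sim (1-\xi)b_n$; the two-level point-process convergence subsumed in Condition~\ref{cond:BerBuc}(i) then yields asymptotic independence of the two sub-blocks and produces, after decomposing by the number $l$ of lower-level exceedances in the left sub-block, the structure $p^{(\xi x)}(l)\, p_2^{((1-\xi)x, (1-\xi)e^y)}(i-l, 0)\, e^{-\theta \xi e^y}$ appearing in $h_{\slb,x}$. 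A standard Riemann-sum argument converts $\sum_{u=0}^{b_n-1}$ to $\int_0^1 \cdot\,d\xi$, and the $-2x\varphi_{(\rm C)}(\theta)$ contribution tracks the centering $(x/b_n)\Exp[\log Z_{n1}^{\slb}] \to (x/b_n)\varphi_{(\rm C)}(\theta)$.

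For part (b), which is a purely algebraic identity, the plan is to apply Fubini and reduce $h_{\slb,x}(\xi)$ to an explicit function of $y$ and $\xi$ by using two moment identities for the compound Poisson laws: $\sum_i i\, p^{(a)}(i) = a$, and $\sum_i i\, p_2^{(a,b)}(i,0) = a\theta\, e^{-\theta b}\, \Exp[\xi_1^{(b/a)} \ind(\xi_2^{(b/a)}=0)]$ (formula (A.7) in \cite{BerBuc18}, used already in the proof of Lemma~\ref{lem:cfg-sigcon}). Writing $g(y) = \Exp[\xi_1^{(e^y/x)} \ind(\xi_2^{(e^y/x)} = 0)]$, the integrand over $i$ becomes linear in $\xi$ of the form $e^{-\theta e^y} x[\xi + \theta(1-\xi) g(y)]$ on $\{y \le \log x\}$, so integrating against $2\,d\xi$ on $[0,1]$ collapses it to $x e^{-\theta e^y}(1 + \theta g(y))$. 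Matching term-by-term against the expression for $h(x)$ from Lemma~\ref{lem:cfg-fidis} and using the representation $\varphi_{(\rm C)}(\theta) = \intnu e^{-\theta e^y}\,dy - \intmn (1-e^{-\theta e^y})\,dy$ then yields the announced identity.

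The principal obstacle is the rigorous handling of the improper integral on $(-\infty, 0)$, where $|\log Z_{n1}^{\slb}|$ is unbounded: the integrands $\Prob(\ind=1, Z\le e^y) - (x/b_n)\Prob(Z\le e^y)$ are individually non-integrable at $y\to -\infty$, and only their difference decays (asymptotically as $O(e^y)$) through a delicate cancellation that must be made explicit. This forces a truncation argument in $y$ combined with uniform integrability of $|\log Z_{n1}^{\slb}|^{2+\delta}$ from Condition~\ref{cond2}(i) and tail control of $Z_{n1}^{\slb}$ via the point process limit, in the spirit of the improper-integral manipulations already carried out in the proof of Lemma~\ref{lem:cfg-sigcon}. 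A secondary concern is uniform convergence of the Riemann-sum summands as $\xi \to 0$ or $\xi \to 1$, where the sub-block splitting degenerates.
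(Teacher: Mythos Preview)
Your reduction in part (a) to $\sum_{u=0}^{b_n-1}\Cov\bigl(\ind(U_{u+1}>1-x/b_n),\,\log Z_{n1}^{\slb}\bigr)$ is correct; the contribution from pairs with $s\notin[t,t+b_n-1]$ is indeed $o(1)$ by the alpha-mixing covariance inequality together with $\|\ind(U_1>1-x/b_n)\|_{2+\delta}=(x/b_n)^{1/(2+\delta)}$ and Condition~\ref{cond2}(i). But your description of the limit identification does not fit the object you have reduced to. The structure $p^{(\xi x)}(l)\,p_2^{((1-\xi)x,(1-\xi)e^y)}(i-l,0)\,e^{-\theta\xi e^y}$ arises as the limit of the joint law of the \emph{full exceedance count} $N_{b_n}^{(x)}(E)=\sum_{s\in I}\ind(U_s>1-x/b_n)$ together with a sliding $Z_{nt}^{\slb}$ whose window overlaps the block $I$ in a fraction $1-\xi$; here $\xi$ indexes the \emph{position of the sliding window} relative to a fixed block, not the position of a single $U_{u+1}$ inside a fixed window. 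Your summand $c(u)$ involves only a single indicator, so there is no exceedance count to decompose by $l$, and the two-level point-process limit does not attach to it in the way you describe.

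There is a much simpler repair. By bilinearity your sum collapses to
\[
\sum_{u=0}^{b_n-1}\Cov\bigl(\ind(U_{u+1}>1-x/b_n),\,\log Z_{n1}^{\slb}\bigr)
=\Cov\bigl(N_{b_n}^{(x)}(E),\,\log Z_{1:b_n}\bigr),
\]
and this converges to $f(x)=h(x)-x\,\varphi_{\rm(C)}(\theta)$ by the computation already done in the proof of Lemma~\ref{lem:cfg-fidis} (equation~\eqref{eq:v2}). Combined with part (b) this proves part (a) as stated; for the application to Lemma~\ref{lem:cfg-fidis2} it gives the required limit $f(x)$ directly, without passing through $h_{\slb,x}$ at all. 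The paper instead groups the double sum block-wise as $\sum_{i,j}\Cov(A_i,D_j)$, reduces via the argument of Lemma~B.1 in \cite{BerBuc18} to $b_n^{-1}\Cov(A_2,D_1+D_2)$, and then parametrizes by the sliding position $t\in[1,2b_n]$; in \emph{that} parametrization one is computing $G_n(i,y)=\Prob(N_{b_n}^{(x)}(E)=i,\ Z_{n,\lfloor(1+\xi)b_n\rfloor+1}^{\slb}\ge e^y)$, and the sub-block argument you sketched is exactly what identifies its limit. Your plan for part (b) is the paper's plan.
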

\begin{proof}
(a)
	We assume that both $U_s$ and $Z_{nt}^{\slb}$ are measurable with respect to the appropriate
	$\mathcal{B}\ho{\varepsilon}_{\cdot: \cdot}$ sigma-algebra; 
	the general case can be treated by multiplying with suitable indicator functions as in the proof of Lemma \ref{lem:cfg-fidis2}. 
	Let
	$
	A_j = \sum_{s \in I_j} \I\big( U_s > 1- \frac{x}{b_n} \big)$ and $D_j = \sum_{s \in I_j} \log(Z_{ns}^{\slb}).$ Then
\begin{multline*}
		\Cov(e_n(x),B_n^{\slb})
		= \frac{1}{n-b_n+1} \sum_{i=1}^{k_n} \sum_{j=1}^{k_n-1} \Cov(A_i,D_j) \\
		+ \frac{1}{n-b_n+1} \sum_{i=1}^{k_n} \Cov(A_i,  \log(Z_{n,n-b_n+1}^{\slb})).
	\end{multline*}
	The second sum is asymptotically negligible, since
	$||A_j||_2 = ||N_{b_n}^{\scs (x)}(E)||_2= O(1)$ and $||\log(Z_{n,n-b_n+1}^{\slb})||_2 =O(1)$
	by Condition \ref{cond:BerBuc}(ii) and \ref{cond2}(i). 
	Next, following the argument in the proof of Lemma B.1 in \cite{BerBuc18}, 
	we may write	
	\begin{align*}
		 \Cov(e_n(x),B_n^{\slb})
		&= \frac{1}{b_n} \Cov(A_2,D_1+D_2) + o(1) \nonumber\\
		&= \frac{1}{b_n} \sum_{t=1}^{2b_n} \Cov \Big\{ \sum_{s \in I_2} \I\big(U_s > 1- \frac{x}{b_n} \big),  \log(Z_{nt}^{\slb}) \Big\} + o(1) \nonumber\\
		&= \int_{0}^{1} f_n(\xi) + g_n(\xi) \diff\xi - 2 x \Exp \big[ \log(Z_{n1}^{\slb}) \big] + o(1), 	\end{align*}
	where 
	\begin{align*}
	f_n(\xi) &= \sum_{t=1}^{b_n} \Exp \Big[\sum_{s \in I_2} \I\big(U_s > 1- \tfrac{x}{b_n} \big) 
	\log(Z_{nt}^{\slb}) \Big] \I\big\{\xi \in [\tfrac{t-1}{b_n},\tfrac{t}{b_n}) \big\}, \\
	g_n(\xi) &= \sum_{t=b_n+1}^{2b_n} \Exp \Big[\sum_{s \in I_2} \I\big(U_s > 1- \tfrac{x}{b_n} \big) 
	\log(Z_{nt}^{\slb}) \Big] \I\big\{\xi \in [\tfrac{t-b_n-1}{b_n},\tfrac{t-b_n}{b_n}) \big\}. 
	\end{align*}
	Note that $\lim_{n\to\infty}\Exp[\log(Z_{n1}\ho{\slb})] = \varphi_{\rm (C)}(\theta)$ by uniform integrability of $\log (Z_{1:n})$, and that $\sup_{n \in \N} ||f_n||_{\infty} + ||g_n||_{\infty} < \infty$ as a consequence of  $\|\sum_{s \in I_1} \I\big( U_s > 1- \frac{x}{b_n} \big)\|_2 \times \|\log(Z_{n1}\ho{\slb})\|_2<\infty$ by Condition \ref{cond:BerBuc}(ii) and \ref{cond2}(i). Hence, the lemma is proven if we show that, for any $\xi \in (0,1)$,
	\[ 
	\lim_{n \to \infty} f_n(1-\xi) = \lim_{n \to \infty} g_n(\xi)= h_{\slb,x}(\xi). 
	\] 
	Since the proof for $f_n(1-\xi)$ is similar, we only treat $g_n(\xi)$, which can be written as 
	\[ 
	g_n(\xi) = \Exp \Big[ \sum_{s \in I_2} \ind \big(U_s > 1- \tfrac{x}{b_n} \big)  
	\log(Z_{n,\lfloor (1+\xi)b_n \rfloor+1}^{\slb}) \Big]. 
	\] 
	Let us next show joint weak convergence of $\sum_{s \in I_2} \I (U_s > 1- \frac{x}{b_n})$ and 
	$\log(Z_{n,\lfloor (1+\xi)b_n \rfloor+1}^{\slb})$. For that purpose, note that
	\begin{align}
		G_n(i,y) :=&\, \Pro \Big( \sum_{s \in I_2} \I\big(U_s > 1- \tfrac{x}{b_n} \big)=i, \ 
		\log(Z_{n,\lfloor (1+\xi)b_n \rfloor+1}^{\slb}) \geq y \Big) \nonumber\\
		=&\, \Pro \Big( \sum_{s \in I_2} \I\big(U_s > 1- \tfrac{x}{b_n} \big)=i,
		\ Z_{n,\lfloor (1+\xi)b_n \rfloor+1}^{\slb} \geq e^y \Big), \nonumber
	\end{align}
	coincides with $F_n(i,e^y)$ in the proof of Lemma B.1 in \cite{BerBuc18}. Hence, by that proof, 
	we have
	\begin{align} 
	\lim_{n \to \infty } G_n(i,y) 
	&= \sum_{l=0}^{i} p^{(\xi x)}(l)  p_2^{((1-\xi)x,(1-\xi)e^y)}(i-l,0) e^{-\theta \xi e^y} \nonumber
	\end{align} 
	for $y \leq \log x$ and
	\begin{align*}
		\lim_{n \to \infty} G_n(i,y) 
		= p^{(\xi x)}(i) \ e^{-\theta e^y}
	\end{align*}
	 for $y > \log x$. Further, note that 
	 \[
	 \lim_{n\to\infty} \Prob \big( N_{b_n}^{(x)}(E)= i \big) =p^{(x)}(i).
	 \]
	As a consequence of the previous three displays, and since weak convergence and uniform integrability implie convergence of moments, we have 
	\begin{align}
		g_n(\xi) 
		&= \sum_{i=1}\ho{\infty} i  \int_{0}\ho{\infty} \Prob\Big(  \sum_{s=b_n+1}\ho{2b_n} \I\big(U_s > 1- \frac{x}{b_n}  = i  \big) , \log(Z_{n,\lfloor (1+\xi)b_n \rfloor +1}\ho{\slb} )
		\geq y \Big) \ \mathrm{d}y \nonumber\\ 
		& \ \ \ \ \ \  \ \ \ \ \ \ \ - i \int_{-\infty}\ho{0} 
		\Prob \Big(  \sum_{s=b_n+1}\ho{2b_n} \I\big(U_s > 1- \frac{x}{b_n} \big) = i  , \log(Z_{n,\lfloor (1+\xi)b_n \rfloor +1}\ho{\slb} ) \leq y\Big) \ \mathrm{d}y \nonumber\\
		&= \sum_{i=1}\ho{\infty} i \int_{0}\ho{\infty} G_n(i,y) \ \mathrm{d}y - i \int_{-\infty}\ho{0} 
		\Prob\big( N_{b_n}^{(x)}(E)= i \big) - G_n(i,y) \ \mathrm{d}y \nonumber\\
		& \to h_{\slb,x}(\xi) \nonumber
	\end{align}
	as asserted, which implies part (a)  of the lemma.

 (b) 
  In the proof of Lemma B.3 in \cite{BerBuc18} it is shown that, for $y\le\log(x)$,
    \begin{multline*}
     S(x,y,\xi) = 
     e^{-\theta\xi e^y} \sum_{i=1}\ho{\infty} i \ \sum_{l=0}\ho{i} p\ho{(\xi x)}(l) \ p_2\ho{((1-\xi)x,(1-\xi)e\ho{y})}(i-l,0) \nonumber\\
     =  \xi x e\ho{- \theta e\ho{y}} + \Exp\left[ \xi_{11}\ho{(e\ho{y}/x)} \I(\xi_{12}\ho{(e\ho{y}/x)}=0) \right]
     \theta (1-\xi)x e\ho{-\theta e\ho{y}}, \nonumber
    \end{multline*}
    where $(\xi_{11}\ho{(y/x)},\xi_{12}\ho{(y/x)}) \sim \pi_2\ho{(y/x)}.$ Equation \eqref{eq:p2sum1} then allows to rewrite
   \begin{align*}
   S(x,y,\xi) =  \xi x e\ho{- \theta e\ho{y}} + (1-\xi) \sum_{i=1}^\infty  i p_2^{(x,e^y)}(i,0)  
   &\equiv  \xi x e\ho{- \theta e\ho{y}} + (1-\xi) T(x,y).
   \end{align*}
    As a consequence, further noting that $\sum_{i=1}\ho{\infty} i \ p\ho{(\xi x)}(i) = \xi x$, we obtain 
    \begin{multline*}
    h_{\slb,x}(\xi) = \int_0^\infty    \xi x e\ho{- \theta e\ho{y}} + \ind(y \le \log (x) )  (1-\xi) T(x,y) \diff y \\
     \qquad - \int_{-\infty}^0 x - \xi x e\ho{- \theta e\ho{y}} - \ind(y  \le \log(x)) (1-\xi) T(x,y) \diff y.
    \end{multline*}
    Then, by Fubinbi's theorem, 
    \begin{multline*}
     2 \int_{0}\ho{1} h_{\slb,x}(\xi) \ \mathrm{d}\xi 
     =
     \int_0^\infty x e^{-\theta e^y} + \ind ( y \le \log x) T(x,y) \diff y \\
      \qquad - \int_{-\infty}^0 x(1-e^{-\theta e^y}) + x -  \ind(y \le \log (x)) T(x,y) \diff y.
    \end{multline*}
    The assertion now follows from the fact that 
    \[\int_0^\infty e^{-\theta e^y} \diff y= \int_\theta^\infty \frac{e^{-z}}z \diff z = - \Ei(-\theta)\] 
    and 
    \begin{align*}
     \int_{-\infty}^0 1-e^{-\theta e^y} \diff y 
     &= 
     \int_0^\theta \frac{1-e^{-z}}{z} \diff z = (1-e^{-z})\log(z) \big|_0^\theta - \int_0^\theta e^{-z}\log(z) \diff z \\
     &=
    \log(\theta) - e^{-\theta} \log(\theta) - \Big\{ \gamma - \int_\theta^\infty e^{-z} \log(z) \diff z\Big\} \\
     &=
    \log(\theta) - e^{-\theta} \log(\theta) - \gamma + \Big\{ -e^{-z} \log(z) \big|_\theta^\infty + \int_\theta^\infty \frac{e^{-z}}z \diff z\Big\} \\
    &= \log (\theta) + \gamma - \Ei(-\theta) = - \varphi_{(\rm C)}(\theta) - \Ei(-\theta)
     \end{align*}
    after assembling terms, where $\Ei(x) = - \int_{-x}^{\infty} e^{-t}/t \ \mathrm{d}t$ for $x>0$ is the exponential integral.
        \end{proof}

    \begin{lem} \label{lem:varcfg1} One has
     \begin{align}
      \lim_{n \to \infty} \Var(B_n\ho{\slb}) &= 8 \log(2)-4 \approx 1.545. \nonumber
     \end{align}
    \end{lem}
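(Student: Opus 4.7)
The plan is to write $\Var(B_n^\slb)$ as a Riemann sum over lags, identify the limit of the covariance of $\log Z_{n1}^\slb$ and $\log Z_{n,1+h}^\slb$ for overlap lags $h=\lfloor b_n\xi\rfloor$, and then evaluate the resulting integral in closed form. With $N=n-b_n+1$ and $c_n(h)=\Cov(\log Z_{n1}^\slb,\log Z_{n,1+h}^\slb)$, stationarity gives
\[
\Var(B_n^\slb)=\tfrac{k_n}{N^2}\,N\Var(\log Z_{n1}^\slb)+\tfrac{2k_n}{N^2}\sum_{h=1}^{N-1}(N-h)\,c_n(h).
\]
The diagonal term is $O(1/b_n)$ by Condition~\ref{cond2}(i). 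For $h\ge b_n$ the two sliding blocks depend on disjoint index sets, and combining the covariance inequality of \cite{DehPhi02} with Condition~\ref{cond:BerBuc}(iii) and Condition~\ref{cond2}(i) shows that the contribution of these lags is negligible (this is analogous to the big-block/small-block estimates used in the proof of Lemma~\ref{lem:cfg-fidis2}). Hence only lags $1\le h<b_n$ matter.

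For $h=\lfloor b_n\xi\rfloor$ with $\xi\in(0,1)$, I would split $\{1,\ldots,b_n+h\}$ into $A=\{1,\ldots,h\}$, $B=\{h+1,\ldots,b_n\}$, $C=\{b_n+1,\ldots,b_n+h\}$ and set $Z_\star=b_n(1-\max_{s\in\star}U_s)$, so that $Z_{n1}^\slb=\min(Z_A,Z_B)$ and $Z_{n,1+h}^\slb=\min(Z_B,Z_C)$. Inserting small separating blocks of length $\ell_n$ between $A,B,C$ at negligible cost (using Condition~\ref{cond:BerBuc}(iii)) and applying the block maxima convergence in Condition~\ref{cond:BerBuc}(i) separately on each subblock yields $(Z_A,Z_B,Z_C)\wto(\zeta_A,\zeta_B,\zeta_C)$ with $\zeta_A,\zeta_B,\zeta_C$ independent and $\zeta_A,\zeta_C\sim\Ed(\theta\xi)$, $\zeta_B\sim\Ed(\theta(1-\xi))$. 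Uniform integrability from Condition~\ref{cond2}(i) upgrades the continuous mapping limit $(\log Z_{n1}^\slb,\log Z_{n,1+h}^\slb)\wto(\log U,\log V)$, with $U=\min(\zeta_A,\zeta_B)$ and $V=\min(\zeta_B,\zeta_C)$, to convergence of the second mixed moment, so that $c_n(\lfloor b_n\xi\rfloor)\to c(\xi):=\Cov(\log U,\log V)$. Combined with the uniform bound $|c_n(h)|\le\Var(\log Z_{n1}^\slb)=O(1)$, dominated convergence on $[0,1]$ then yields
\[
\lim_{n\to\infty}\Var(B_n^\slb)=2\int_0^1 c(\xi)\,\diff\xi.
\]

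It remains to evaluate this integral. Conditionally on $\zeta_B=w$, $U$ and $V$ are independent, and integration by parts together with the exponential integral $E_1(x)=\int_x^\infty e^{-t}/t\,\diff t$ gives $\Exp[\log U\mid\zeta_B=w]=-\gamma-\log(\theta\xi)-E_1(\theta\xi w)$. The Frullani identity yields $\Exp[E_1(\theta\xi\zeta_B)]=-\log\xi$, and after squaring and unconditioning the constant and cross terms cancel, leaving the $\theta$-free formula
\[
c(\xi)=-\log^2\xi+\Exp[E_1(\theta\xi\zeta_B)^2].
\]
Using the representation $E_1(x)=\int_1^\infty e^{-xs}/s\,\diff s$, Fubini, and the substitution $r=s+t$ together with $\int_1^{r-1}\diff s/[s(r-s)]=2\log(r-1)/r$, the $\xi$-integral reduces to
\[
\int_0^1\!\Exp[E_1(\theta\xi\zeta_B)^2]\,\diff\xi = 2\!\int_1^\infty\!\frac{\log(u+1)\log u}{u^2}\,\diff u-2\!\int_1^\infty\!\frac{\log u}{u(u+1)}\,\diff u.
\]
Both integrals simplify by integration by parts to the classical value $\int_0^1\log(1+v)/v\,\diff v=\pi^2/12$, giving $\pi^2/12+2\log 2$ and $\pi^2/12$ respectively; together with $\int_0^1\log^2\xi\,\diff\xi=2$, this yields $2\int_0^1 c(\xi)\,\diff\xi=8\log 2-4$, as claimed. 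The main obstacle is step two, namely pushing the pointwise limit $c_n(\lfloor b_n\xi\rfloor)\to c(\xi)$ through the Riemann sum with an integrable dominant (especially controlling the boundary as $\xi\to 0$, where $c(\xi)\sim-\log^2\xi$ is only integrable); the final closed-form computation is long but elementary.
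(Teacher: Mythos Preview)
Your proposal is correct and reaches the same destination as the paper, but by a noticeably different route in two places. First, for the joint limit of $(\log Z_{n1}^{\slb},\log Z_{n,1+\lfloor b_n\xi\rfloor}^{\slb})$ you use the three–subblock decomposition $(\zeta_A,\zeta_B,\zeta_C)$ with asymptotic independence via small separating gaps; the paper instead cites the point-process argument of Lemma~B.2 in \cite{BerBuc18} to obtain the joint survival function $G_\xi(x,y)=\exp\{-\theta[\xi(e^x\wedge e^y)+(e^x\vee e^y)]\}$ directly. Your construction reproduces exactly this $G_\xi$, since $\Prob(U>u,V>v)=\exp\{-\theta\xi(u\wedge v)-\theta(u\vee v)\}$. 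Second, for the integral the paper applies the Hoeffding-type identity of Lemma~\ref{hilfslemEW}, splits $\int_0^1\Cov(X^{(\xi)},Y^{(\xi)})\,\mathrm d\xi$ into three quadrant integrals $A+B-2C$ involving $\Ei$, and shows that the $\theta$-dependent parts cancel. Your approach---conditioning on $\zeta_B$, reducing to $c(\xi)=-\log^2\xi+\Exp[E_1(\theta\xi\zeta_B)^2]$, and evaluating via the $(s,t)$-representation of $E_1^2$---makes the $\theta$-independence of $c(\xi)$ immediate (it follows from the scaling $\theta\zeta_B\sim\Ed(1-\xi)$) and leads to the same value $4\log 2-2$ more transparently.

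One small slip: your remark that ``$c(\xi)\sim-\log^2\xi$'' near $0$ is not right. Since $U,V\sim\Ed(\theta)$ marginally for every $\xi$, Cauchy--Schwarz gives $|c(\xi)|\le\Var(\log U)=\pi^2/6$; indeed $c(\xi)\to\pi^2/6$ as $\xi\to0$ (the two blocks coincide) and $c(\xi)\to0$ as $\xi\to1$ (the blocks become independent). This actually makes your dominated-convergence step easier than you suggest: the uniform bound $|c_n(h)|\le\Var(\log Z_{n1}^{\slb})=O(1)$ from Condition~\ref{cond2}(i) already suffices.
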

    
    \begin{proof}
     As in the proof of Lemma \ref{lem:cov-cfg1}, we assume that the $Z_{nt}^{\slb}$ are measurable with respect to the 
     appropriate $\mathcal{B}\ho{\varepsilon}_{\cdot,\cdot}$ sigma-algebra. We may then argue as in that proof to obtain
     \begin{align}
     	\Var(B_n^{\slb})
     	&= \frac{2}{b_n} \sum_{t=1}^{b_n} \Exp \big[ \log(Z_{n1}^{\slb}) \log(Z_{n,1+t}^{\slb}) \big] - 2 \Exp[\log(Z_{n1}^{\slb})]^2 + o(1) \nonumber\\
     	&= 2 \int_{0}^{1} f_n(\xi) \ \mathrm{d}\xi - 2 \Exp[\log(Z_{n1}^{\slb})]^2 +o(1), \label{eq:stern1}
     \end{align}
     where $f_n : [0,1] \to \R$ is defined as
     \begin{align} 
     f_n(\xi) 
     = \sum_{t=1}^{b_n} \Exp[\log(Z_{n1}^{\slb})  \log(Z_{n,1+t}^{\slb})] \ \I\big( \xi \in \big[ \tfrac{t-1}{b_n},\tfrac{t}{b_n} \big) \big) \nonumber
     &= \Exp [\log(Z_{n1}^{\slb})  \log(Z_{n, \lfloor b_n \xi \rfloor+1}^{\slb})]. \nonumber
     \end{align}
     By Condition \ref{cond2}(i), we have $\Exp[\log(Z_{n1}^{\slb})] \to \varphi_{(\rm C)}(\theta)$. Further,
     \[\sup_{n \in \N} ||f_n||_{\infty} \leq \sup_{n \in \N} E[\log(Z_{n1}^{\slb})^2] < \infty,\]
    whence convergence of the integral over $f_n$ in \eqref{eq:stern1} may be concluded from the dominated convergence theorem, once we have shown pointwise 
     convergence of $f_n$. To this end we show that, for any fixed $\xi \in (0,1)$, 
          \begin{equation}
     	\big( \log(Z_{n1}^{\slb}), \ \log(Z_{n,\lfloor b_n \xi \rfloor +1}^{\slb}) \wto  \big( X^{(\xi)},Y^{(\xi)}\big) \label{weakcon}
     \end{equation}
     for some random vector $\big( X^{(\xi)},Y^{(\xi)}\big)$. This in turn 
     will imply 
     \begin{align*}
     	\lim_{n \to \infty} f_n(\xi) &= \lim_{n \to \infty} \Exp[\log(Z_{n1}^{\slb}) \ \log(Z_{n, \lfloor b_n \xi \rfloor +1}^{\slb})] 
     	= \Exp[X^{(\xi)}Y^{(\xi)}] \nonumber
     \end{align*}    
    by Condition \ref{cond2}(i) and therefore
         \begin{equation} \label{eq:varl}
      \lim_{n \to \infty} \Var(B_n\ho{\slb}) = 2 \int_{0}\ho{1} \Exp [X\ho{(\xi)} Y\ho{(\xi)}] \ \mathrm{d}\xi - 2 \varphi_{(\rm C)}(\theta)\ho{2} = 
      2\int_{0}\ho{1} \Cov (X\ho{(\xi)}, Y\ho{(\xi)}) \ \mathrm{d}\xi.
     \end{equation}
   For the proof of \eqref{weakcon}, define, for $x,y\in \R$,
     \begin{align*}
     	G_{n,\xi}(x,y) &= \Pro \big( \log(Z_{n1}^{\slb}) > x, \ \log(Z_{n,\lfloor b_n \xi \rfloor +1}^{\slb}) > y \big)
     	= \Pro \big( Z_{n1}^{\slb} > e^x, \ Z_{n,\lfloor b_n \xi \rfloor +1}^{\slb} > e^y \big), \nonumber 
     \end{align*}
     which converges to
     \begin{align*}
     	G_{\xi}(x,y) = & \exp \left( - \theta \big[\xi(e^x \wedge e^y) + (e^x \vee e^y) \big] \right)      \end{align*}  
    by the proof of Lemma B.2 in \cite{BerBuc18}. Hence, \eqref{weakcon}, where the random vector $(X^{(\xi)},Y^{(\xi)})$ has joint c.d.f.
     \begin{align*}
     	F_{\xi}(x,y) = \Pro \big( X^{(\xi)} \leq x, \ Y^{(\xi)} \leq y \big) &= 1- \Pro \big( X^{(\xi)}>x \big) - \Pro\big( Y^{(\xi)}>y \big) + G_{\xi}(x,y), \\
	&=1- \exp(- \theta e^x) - \exp(-\theta e^y) + G_{\xi}(x,y). 
     \end{align*}  
     
      We are left with calculating the right-hand side of \eqref{eq:varl}. By Lemma \ref{hilfslemEW}, we have
      \begin{align}
       V &\equiv  \int_{0}\ho{1} \Cov (X\ho{(\xi)}, Y\ho{(\xi)})  \ \mathrm{d}\xi  \nonumber \\
      &=  \int_{0}\ho{1} \int_{0}\ho{\infty} \int_{0}\ho{\infty} G_{\xi}(x,y) - e^{-\theta e^x} e^{-\theta e^y} \ \mathrm{d}x \ \mathrm{d}y \ \mathrm{d}\xi \nonumber  \\
       &\hspace{1cm} + \int_{0}\ho{1} \int_{- \infty}\ho{0} \int_{- \infty}\ho{0} F_{\xi}(x,y) - (1-e^{-\theta e^x} ) (1-e^{-\theta e^y} )\ \mathrm{d}x \ \mathrm{d}y \ \mathrm{d}\xi \nonumber\\
      & \hspace{1cm}- 2 \int_{0}\ho{1} \int_{- \infty}\ho{0} \int_{0}\ho{\infty} \Pro(X\ho{(\xi)}>x,Y\ho{(\xi)}\leq y) - e^{-\theta e^x}(1-e^{-\theta e^y}) \ \mathrm{d}x \ \mathrm{d}y \ \mathrm{d}\xi, \nonumber   \\
      & \equiv A+B-2\cdot C. \label{IntEWXi} 
     \end{align}
     We start with the first summand $A$. Recall the exponential integral $\Ei(x)= - \int_{-x}^\infty e^{-t}/t \diff t$ for $x>0$, and note that $\int_{y}^\infty e^{-\theta e^x}\diff x = -\Ei(-\theta e^y)$ for $y\in\R$ and $\int_0^1 e^{-a \xi } \diff \xi = (1-e^{-a})/a$ for $a>0$. Fubini's theorem allows to write
     \begin{align}
     A =&\, 
      \intnu \int_0^y e\ho{- \theta e\ho{y}}\Big\{  \intne e\ho{- \theta \xi e\ho{x}} \ \mathrm{d}\xi - e^{-\theta e^x} \Big\}  \diff x 
      +  
      \int_y^\infty e\ho{- \theta e\ho{x}} \Big\{ \intne e\ho{- \theta \xi e\ho{y}} \ \mathrm{d}\xi - e^{-\theta e^y} \Big\} \ \mathrm{d}x \ \mathrm{d}y \nonumber\\
      =&\,
       \int_0^\infty e\ho{- \theta e\ho{y}} \int_0^y \frac{1-e^{-\theta e^x}}{\theta e^x} - e^{-\theta e^x} \diff x 
       + 
      \int_{y}\ho{\infty}  e^{-\theta e^x} \diff x \, \Big\{ \frac{1-e\ho{- \theta e\ho{y}}}{\theta e^y}-  e^{-\theta e^y} \Big\} \diff y \nonumber\\
      =&\, 
      \intnu e\ho{- \theta e\ho{y}} \Big\{ \frac{e^{-\theta e^y}-1}{\theta e^y} - \frac{e^{-\theta}-1}{\theta}  \Big\}  +  \{ -\Ei(-\theta e^y) \} \Big\{ \frac{1-e^{-\theta e^y}}{\theta e^y}  - e^{-\theta e^y} \Big\}\diff y.\nonumber
     \end{align}
     Next, invoke the substitution $z=\theta e^y$ to obtain that
     \begin{align} \label{eq:av}
     A=\int_\theta^\infty \Big\{ \frac{e^{-z}}z - \frac1z + \frac{1-e^{-\theta}}{\theta} \Big\} \frac{e^{-z}}z - \Ei(-z) \Big\{\frac1z- \frac{e^{-z}}z  -e^{-z} \Big\}\frac1z \diff z.
     \end{align}
     A similar calculation allows to rewrite
     \begin{align*}
     B 
     &= 
     \int_0^1 \int_{-\infty}^0 \int_{-\infty}^{0}  G_\xi(x,y)  - e^{-\theta e^x} e^{-\theta e^y} \diff x \diff y \diff \xi \\
     &=
     \int_{-\infty}^0 \int_{-\infty}^y e^{-\theta e^y} \Big\{ \int_0^1 e^{-\theta \xi e^x} \diff \xi - e^{-\theta e^x} \Big\}  \diff x
     +
     \int_y^0 e^{-\theta e^x} \Big\{ \int_0^1 e^{-\theta \xi e^y} \diff \xi - e^{-\theta e^y} \Big\} \diff x \diff y \\
     &=
     \int_{-\infty}^0 e^{-\theta e^y} \int_{-\infty}^y \frac{1-e^{-\theta e^x}}{\theta e^x} - e^{-\theta e^x} \diff x
     +
     \int_y^0 e^{-\theta e^x}\diff x \,  \Big\{ \frac{1-e^{-\theta e^y}}{\theta e^y} - e^{-\theta e^y} \Big\} \diff y \\
     &=
     \int_{-\infty}^0 e^{-\theta e^y} \Big\{ \frac{e^{-\theta e^y} - 1}{\theta e^y} + 1\Big\} 
     +
    \Big\{ \Ei(-\theta) - \Ei(-\theta e^y)\Big\}  \Big\{ \frac{1-e^{-\theta e^y}}{\theta e^y} - e^{-\theta e^y} \Big\} \diff y,
    \end{align*}
    and the substitution $z=\theta e^y$ yields
    \begin{align} \label{eq:bv}
    B&=
    \int_0^\theta \Big\{ \frac{e^{-z}}{z} - \frac1z +1\Big\} \frac{e^{-z}}{z} 
    + 
    \Big\{ \Ei(-\theta) - \Ei(-z)\Big\} \Big\{ \frac{1}z - \frac{e^{-z}}z - e^{-z} \Big\} \frac1z \diff z.
     \end{align}
     Finally, regarding the term $C$, we have
     \begin{align}
     C 
     &= \nonumber
     \int_0^1 \int_{-\infty}^0 \int_{0}^{\infty} e^{-\theta e^x} e^{-\theta e^y} - G_\xi(x,y) \diff x \diff y \diff \xi \\
     &= \nonumber
     \int_{-\infty}^0 \int_0^\infty e^{-\theta e^x} \Big\{ e^{-\theta e^y}  - \int_0^1 e^{-\theta \xi e^y} \diff \xi  \Big\} \diff x \diff y\\
     &=
     \{ -\Ei(-\theta)\} \int_{-\infty}^0 e^{-\theta e^y}  -  \frac{1-e^{-\theta e^y}}{\theta e^y} \diff y
     =\label{eq:cv}
     \Ei(-\theta) \int_0^\theta \Big\{ \frac{1}z - \frac{e^{-z}}{z}- e^{-z}\Big\} \frac1z \diff z.
    \end{align}
Next, the expressions in \eqref{eq:av}, \eqref{eq:bv} and \eqref{eq:cv} may be plugged-into \eqref{IntEWXi}. Using the notations
\[
g(z) = \Big\{ \frac{1}z - \frac{e^{-z}}z - e^{-z} \Big\} \frac1z, \qquad h(z)=  \Big\{ \frac{e^{-z}}z - \frac1z + 1 \Big\} \frac{e^{-z}}z,
\]
we obtain that
\begin{align*}
V &= 
\int_\theta^\infty \Big\{ \frac{1-e^{-\theta}}{\theta} - 1\Big\}  \frac{e^{-z}}z + h(z)  + \{-\Ei(-z)\} g(z)  \diff z \\
& \hspace{1.5cm} + \int_0^\theta \Big\{ \Ei(-\theta) - \Ei(-z)\Big\} g(z) + h(z) - 2 \Ei(-\theta) g(z) \diff z\\
&= \int_0^\infty h(z) +  \{-\Ei(-z)\} g(z)  \diff z +  \frac{1-e^{-\theta}-\theta}{\theta}  \{ - \Ei(-\theta)\} - \Ei(-\theta) \int_0^\theta g(z) \diff z
\end{align*}
The first integral is independent of $\theta$, and can be seen to be equal to $4 \log 2 - 2$. Further, $\int_0^\theta g(z) \diff z=(e^{-\theta}-1+\theta)/\theta$, whence the last two summands cancel out. This proves the lemma.
\end{proof}


\subsection{Further auxiliary lemmas}
\label{subsec:pa}

          \begin{lem} \label{hilfslem Konv}
          Let $A$ be a continuous function on $[0,1]$ such that $\lim_{x\to0} A(x)/x\ho{\eta} =0$ for some $\eta\in(0,1/2)$. Further, let $H_n$ and $H$ be monotone and non-negative functions
          on $[0,1]$ with 
            \[
            \limsup_{n \to \infty} \int_{[0,1]} \frac{1}{x\ho{1-\eta}} \ \mathrm{d}H_n(x) < \infty \ \ \textrm{ and } \ \ 
          \int_{[0,1]} \frac{1}{x\ho{1-\eta}} \ \mathrm{d}H(x) < \infty.
          \]
          If $\lim_{n\to\infty} \sup_{x \in [0,1]} |B_n(x)| =0$, where $B_n := H_n-H$,
          and if there is a sequence of measurable functions $A_n$ such that \[  \lim_{n\to\infty}  \sup_{x \in [0,1]} \left| \frac{A_n(x)-A(x)}{x\ho{\eta}}
          \right| =0 ,\] then we have \[  \lim_{n\to\infty}  \int_{[0,1]} \frac{A_n(x)}{x} \ \mathrm{d}B_n(x) = 0.\]
         \end{lem}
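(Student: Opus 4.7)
My plan is to split the integral at a small threshold $\delta>0$ and treat the parts near zero and away from zero separately. The behaviour near zero will be controlled by the decay assumption on $A$ together with the weighted integrability of $H_n$ and $H$, while the part on $[\delta,1]$ will be handled by the uniform convergence of $B_n$ to zero combined with an approximation argument, since $A_n/x$ is uniformly bounded away from zero there but need not be of bounded variation.

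For the part near zero, I would write $A_n(x)/x = \{A_n(x)/x^\eta\}\cdot x^{\eta-1}$. By the triangle inequality and the uniform hypothesis on $A_n - A$,
\[
\sup_{x\in(0,\delta]} \big| A_n(x)/x^\eta \big| \le \sup_{x\in(0,\delta]} \big|A(x)/x^\eta\big| + o(1)
\quad \text{as } n\to\infty.
\]
Given $\varepsilon>0$, the first term on the right can be made smaller than $\varepsilon$ by choosing $\delta$ small (by the assumption $A(x)/x^\eta\to 0$). Hence, since $|\diff B_n|\le \diff H_n+\diff H$,
\[
\Big| \int_{(0,\delta]} \tfrac{A_n(x)}{x}\,\diff B_n(x) \Big|
\le 2\varepsilon \Big\{ \int_{[0,1]} x^{\eta-1}\,\diff H_n(x) + \int_{[0,1]} x^{\eta-1}\,\diff H(x)\Big\},
\]
and the bracket is uniformly bounded in $n$ by assumption.

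For the part on $[\delta,1]$, I would first reduce to $A$ by writing
\[
\int_{[\delta,1]} \tfrac{A_n(x)-A(x)}{x}\,\diff B_n(x)
= O\bigg( \sup_{x\in[\delta,1]}\tfrac{|A_n(x)-A(x)|}{x^\eta}\bigg)\cdot O(1) = o(1),
\]
arguing exactly as above (note that $\sup_{n} \int_{[0,1]} \diff H_n \le \sup_n \int_{[0,1]} x^{\eta-1}\diff H_n<\infty$ since $x^{\eta-1}\ge 1$ on $(0,1]$, so the total variations of the $B_n$ are uniformly bounded). It then remains to show $\int_{[\delta,1]} A(x)/x\,\diff B_n(x) \to 0$. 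Since $A/x$ is continuous on the compact set $[\delta,1]$, by Stone--Weierstrass I can approximate it uniformly to within $\varepsilon$ by a continuously differentiable function $g$. For the approximation error, $|\int_{[\delta,1]} (A/x - g)\,\diff B_n|\le \varepsilon\cdot \mathrm{Var}(B_n|_{[\delta,1]})\le \varepsilon\cdot C$ uniformly in $n$. For the main term, integration by parts gives
\[
\int_{[\delta,1]} g(x)\,\diff B_n(x)
= g(1)B_n(1) - g(\delta)B_n(\delta^-) - \int_\delta^1 B_n(x) g'(x)\,\diff x,
\]
which tends to zero as $n\to\infty$ because $\sup_x |B_n(x)|\to 0$ and $g'$ is bounded on $[\delta,1]$.

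Combining both pieces yields $\limsup_n \big|\int_{[0,1]} A_n/x\,\diff B_n\big| \lesssim \varepsilon$, and since $\varepsilon$ is arbitrary the proof is complete. The main technical obstacle is the integrability near $0$: one has to exploit the interplay of the $x^{-\eta}$-decay of $A$ at the origin with the $x^{\eta-1}$-integrability of $\diff H_n$, and also to verify that the uniform bounds on the variation of $B_n$ allow the approximation step on $[\delta,1]$ even though $A/x$ is only assumed continuous, not of bounded variation.
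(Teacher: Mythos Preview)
Your proof is correct, though it proceeds by a somewhat different route than the paper. The paper does not split at a threshold $\delta$; instead it approximates $A(x)/x$ globally on $[0,1]$ by a step function $\tilde A_r(x)=\sum_{k=1}^r \mathbf 1_{((k-1)/r,\,k/r]}(x)\,A(k/r)/(k/r)$ and decomposes
\[
\int \frac{A_n}{x}\,\diff B_n = \int \frac{A_n-A}{x}\,\diff B_n + \int\Big(\frac{A}{x}-\tilde A_r\Big)\diff B_n + \int \tilde A_r\,\diff B_n.
\]
The first term is treated exactly as you do. For the second, the paper exploits that $x\mapsto A(x)/x^\eta$ extends continuously to $[0,1]$ (since $A(x)/x^\eta\to 0$), so it is \emph{uniformly} continuous there; this makes the step-function error small in the weighted sup norm $\sup_x |\,\cdot\,|/x^\eta$, and the same integrability bound handles it on all of $[0,1]$ at once, without isolating a neighbourhood of $0$. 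The third term is a finite sum of increments of $B_n$, which vanish by $\sup|B_n|\to 0$.

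Your approach trades this global step-function argument for a local splitting: the decay of $A/x^\eta$ controls the piece near $0$, and on $[\delta,1]$ you replace the step-function idea by a $C^1$ approximation plus integration by parts against $B_n$. Both arguments rely on the same two ingredients (the weighted integrability of $\diff H_n+\diff H$ and $\sup|B_n|\to 0$), and both need the uniform bound on the total variation of $B_n$, which you correctly derive from $x^{\eta-1}\ge 1$ on $(0,1]$. The paper's route is slightly more economical because the continuity of $A/x^\eta$ at $0$ lets it avoid the $\delta$-splitting and the integration by parts; your route is a bit more hands-on but entirely sound.
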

         
         \begin{proof}
         For $r \in \N$ define the piecewise constant function 
         \[ 
         \tilde{A}_r(x) := \sum_{k=1}\ho{r} \I_{( \frac{k-1}{r},
         \frac{k}{r} ]} (x) \ \frac{A\big(k/r\big)}{k/r} 
         \] as an approximation of $A(x)/x$. We write 
         $\int_{[0,1]}A_n(x)/{x} \ \mathrm{d}B_n(x)
         = I_{n1}+I_{n2}+I_{n3},$
         where
         \begin{align*}
          I_{n1} &= \int_{[0,1]} \frac{A_n(x)-A(x)}{x} \ \mathrm{d}B_n(x), &  
          I_{n2} &= \int_{[0,1]} \frac{A(x)}{x}-\tilde{A}_r(x)  \ \mathrm{d}B_n(x),   \\
          I_{n3} &= \int_{[0,1]} \tilde{A}_r(x) \ \mathrm{d}B_n(x). \nonumber
         \end{align*}
         The first integral is bounded by
         \begin{align*}
          \int_{[0,1]} \Big| \frac{A_n(x)-A(x)}{x} \Big| \ \mathrm{d}(H_n+H)(x) 
          \leq \sup_{x \in [0,1]} \Big| \frac{A_n(x)-A(x)}{x\ho{\eta}} \Big| \ \int_{[0,1]}
          \frac{1}{x\ho{1-\eta}} \ \mathrm{d}(H_n+H)(x),
         \end{align*}
         which converges to zero by assumption. Regarding $I_{n2}$, we obtain 
         \begin{align} 
         |I_{n2}| 
         &= 
         \Big| \int_{[0,1]} \frac{A(x)-\tilde{A}_r(x)x}{x\ho{\eta}} \ \frac{1}{x\ho{1-\eta}} \ \mathrm{d}B_n(x) \Big| \nonumber\\
         & \leq 
         \sup_{x \in [0,1]} \Big| \frac{A(x)-\tilde{A}_r(x)x}{x\ho{\eta}} \Big| \ 
         \int_{[0,1]} \frac{1}{x\ho{1-\eta}} 
         \ \mathrm{d}(H_n+H)(x). \label{supAtilde}
         \end{align}
         By uniform continuity of $x \mapsto A(x)/x\ho{\eta}$ on $[0,1]$, we have \[ \sup_{x \in [0,1]} \Big| 
         \frac{A(x)-\tilde{A}_r(x)x}{x\ho{\eta}} \Big| \to 0 \ \textrm{ for } r \to \infty. \] Thus, the limes superior (for $n\to\infty$) of the expression on the right-hand side of (\ref{supAtilde}) can be made 
         arbitrarily small by increasing $r$. 
         Finally, we can bound $|I_{n3}|$ as follows 
         \begin{align}
          |I_{n3}| 
          \leq 
         \sum_{k=1}\ho{r} \frac{|A(k/r)|}{k/r} \Big| \int_{[0,1]} 
         \I_{\big( \frac{k-1}{r}, \frac{k}{r} \big]} (x) \ \mathrm{d}B_n(x) \Big| 
         &= 
         \sum_{k=1}\ho{r} \frac{|A(k/r)|}{k/r} \Big| B_n\Big(\frac{k}{r}\Big)-B_n\Big( \frac{k-1}{r} \Big) 
         \Big| \nonumber\\
         &\leq 2r\ho{2} \sup_{x \in [0,1]} |A(x)| \sup_{x \in [0,1]} |B_n(x)|, \nonumber
         \end{align}
         which converges to zero by assumption. 
\end{proof}

    \begin{lem} \label{hilfslemEW} Let $X$ and $Y$ be real-valued random variables such that $XY$ is integrable. Then, 
     \begin{align}
       E[XY] = & \intnu \intnu \Pro (X>x, Y>y) \diff x\diff y + \intmn \intmn \Pro(X \leq x, Y \leq y) 
      \diff x\diff y \nonumber\\
       &- \intmn \intnu \Pro(X>x, Y \leq y) \diff x\diff y - \intnu \intmn \Pro(X \leq x, Y > y) \diff x\diff y. \nonumber
     \end{align}
    \end{lem}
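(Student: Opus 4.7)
The plan is to reduce to the layer-cake identity for products of non-negative random variables, applied separately to the four pieces arising from the decomposition $X = X^{+} - X^{-}$, $Y = Y^{+} - Y^{-}$.

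First, I would record the basic fact that for non-negative random variables $U, V$ with $UV$ integrable one has
\[
UV \;=\; \int_{0}^{\infty}\!\!\int_{0}^{\infty} \I(U > u)\, \I(V > v)\, \mathrm{d}u\, \mathrm{d}v,
\]
so by Tonelli's theorem,
\[
\Exp[UV] \;=\; \int_{0}^{\infty}\!\!\int_{0}^{\infty} \Pro(U > u,\, V > v)\, \mathrm{d}u\, \mathrm{d}v.
\]
Since $|XY| = X^{+}Y^{+} + X^{+}Y^{-} + X^{-}Y^{+} + X^{-}Y^{-}$, integrability of $XY$ forces each of the four expectations $\Exp[X^{\pm}Y^{\pm}]$ to be finite, which justifies the linear decomposition
\[
\Exp[XY] \;=\; \Exp[X^{+}Y^{+}] - \Exp[X^{+}Y^{-}] - \Exp[X^{-}Y^{+}] + \Exp[X^{-}Y^{-}].
\]

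Second, I would apply the layer-cake formula to each of the four terms and undo the sign via the substitutions $x' = -x$ and/or $y' = -y$. For example, for $x, y > 0$ one has $\{X^{+} > x\} = \{X > x\}$ and $\{Y^{-} > y\} = \{Y < -y\}$, so
\[
\Exp[X^{+}Y^{-}] \;=\; \int_{0}^{\infty}\!\!\int_{0}^{\infty} \Pro(X > x,\, Y < -y)\, \mathrm{d}x\, \mathrm{d}y \;=\; \int_{0}^{\infty}\!\!\int_{-\infty}^{0} \Pro(X > x,\, Y \leq y)\, \mathrm{d}y\, \mathrm{d}x,
\]
the last identity using $y \mapsto -y$ and the fact that $\{Y < y\}$ and $\{Y \leq y\}$ differ only on the at most countable set of atoms of the law of $Y$, which has Lebesgue measure zero in the $y$-integral. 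Analogous substitutions give $\Exp[X^{+}Y^{+}]$, $\Exp[X^{-}Y^{+}]$ and $\Exp[X^{-}Y^{-}]$ as the four integrals appearing in the statement of the lemma, with the correct signs.

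Third, assembling the four pieces yields the asserted identity. The only delicate point is the bookkeeping of strict versus weak inequalities after the substitution $y \mapsto -y$ (and similarly for $x$); this is harmless because the set $\{Y = y\}$ contributes zero to the Lebesgue integral over $y$, so strict and non-strict inequalities may be used interchangeably. I do not expect any genuine obstacle here: the argument is a standard reduction to Fubini/Tonelli combined with the decomposition into positive and negative parts, and the whole proof could be written out in a few lines.
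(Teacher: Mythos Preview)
Your proposal is correct and is precisely the ``standard calculation based on Fubini's theorem'' that the paper invokes without spelling out: decompose $X$ and $Y$ into positive and negative parts, apply the layer-cake identity $\Exp[UV]=\int_0^\infty\int_0^\infty \Pro(U>u,V>v)\,\mathrm{d}u\,\mathrm{d}v$ to each of the four non-negative products, and reassemble after the sign substitutions. The remark about strict versus weak inequalities being immaterial Lebesgue-a.e.\ is the only subtlety, and you handle it correctly.
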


\begin{proof} This is a standard calculation based on Fubini's theorem.
\end{proof}

\bibliographystyle{chicago}
\bibliography{biblio}

\end{document}